\pdfoutput=1
\documentclass[sn-basic,Numbered]{sn-jnl}
\usepackage{apxproof}
\usepackage{graphicx}%
\usepackage{enumerate}%
\usepackage{multirow}%
\usepackage{bbm}%
\usepackage{amsmath,amssymb,amsfonts}%
\usepackage{amsthm}%
\usepackage{mathrsfs}%
\usepackage[title]{appendix}%
\usepackage{xcolor}%
\usepackage{textcomp}%
\usepackage{manyfoot}%
\usepackage{booktabs}%
\usepackage{algorithm}%
\usepackage{algorithmicx}%
\usepackage{algpseudocode}%
\usepackage{listings}%
\usepackage{caption}
\usepackage{subcaption}
\usepackage{mathtools}
\usepackage{svg}
\usepackage{tabularx}
\usepackage{stmaryrd}

\makeatletter
\newcommand{\settitle}{\@maketitle}
\makeatother

\theoremstyle{thmstyleone}%
\newtheoremrep{theorem}{Theorem}%
\newtheoremrep{lemma}{Lemma}%
\newtheoremrep{proposition}[theorem]{Proposition}%

\theoremstyle{thmstyletwo}%

\theoremstyle{thmstylethree}%

\raggedbottom

\newcommand{\setto}{\rightrightarrows}
\newcommand{\zer}[1]{\mathrm{zer}\left( #1 \right)}

\newcommand{\R}{\mathbb R}
\newcommand{\Rm}{\mathbb{R}^{m \times m}}
\newcommand{\Sp}{\mathbb{S}_+}
\newcommand{\I}{\mathrm{Id}}
\newcommand{\1}{\mathbbm{1}}
\newcommand{\norm}[1]{\left\lVert#1\right\rVert}

\newcommand{\mk}[1]{\mathbf{#1}}
\newcommand{\diag}{\text{diag}}
\newcommand{\HH}{\mathcal{H}}
\renewcommand{\v}[1]{\mathbf{#1}}
\DeclareMathOperator{\prox}{prox}
\DeclareMathOperator*{\find}{Find}
\DeclareMathOperator{\tr}{tr}
\DeclareMathOperator*{\argmin}{argmin}
\DeclareMathOperator*{\minimize}{minimize}
\DeclarePairedDelimiter\abs{\lvert}{\rvert}%

\DeclarePairedDelimiter\floor{\lfloor}{\rfloor}

\newenvironment{notation*}
  {\par\vspace{\abovedisplayskip}\noindent
   \tabularx{\columnwidth}{>{$}l<{$} @{${}:{}$} >{\raggedright\arraybackslash}X}}
  {\endtabularx\par\vspace{\belowdisplayskip}}

\begin{document}

\title[Optimal Design of Resolvent Splitting Algorithms]{Optimal Design of Resolvent Splitting Algorithms}

\author{\fnm{Robert L} \sur{Bassett}}\email{robert.bassett@nps.edu}
\author{\fnm{Peter} \sur{Barkley}}\email{peter.barkley@nps.edu}

\affil{\orgdiv{Operations Research Department}, \orgname{Naval Postgraduate School}, \orgaddress{\street{1 University Circle}, \city{Monterey}, \postcode{93943}, \state{CA}, \country{U.S.A.}}}

\abstract{In this paper, we introduce a novel semidefinite programming framework for designing custom frugal resolvent splitting algorithms which find a zero in the sum of $n$ monotone operators. This framework features a number of design choices which facilitate creating resolvent splitting algorithms with specific communication structure. We illustrate these design choices using a variety of constraint sets and objective functions, as well as the use of a mixed-integer SDP to minimize time per iteration or required number of communications between the resolvents which define the splitting. Using the Performance Estimation Problem (PEP) framework, we provide parameter selections, such as step size, which for high dimensional problems provide optimal contraction factors in the algorithms. Among the algorithm design choices we introduce, we provide a characterization of algorithm designs which provide minimal convergence time given structural properties of the monotone operators, resolvent computation times, and communication latencies.}

\maketitle

\begin{toappendix}
\begin{center}
{\Large Appendix: Optimal Design of Resolvent Splitting Algorithms}

\vspace{1cm}

{\large Robert L Bassett  and  Peter Barkley}
\vspace{1cm}
\end{center}
\end{toappendix}

\section{Introduction}\label{Sec:Introduction}

In this paper, we introduce a novel method for designing custom resolvent splitting algorithms to find a zero in the sum of $n$ maximal monotone operators. The algorithms can be described using only vector addition, scalar multiplication, and the resolvents of each operator. The flexible framework we introduce allows the creation of decentralized, distributed resolvent splitting algorithms by customizing the flow of information between the subproblems which compute each resolvent. This permits customized splitting algorithms designed for particular applications where it may be necessary to specify the communication structure of the algorithms to facilitate computational efficiency or respect real-world constraints.

The work presented here builds on recent work in $n$-operator splittings. Motivated by the success of the Alternating Direction Method of Multipliers (ADMM), a proximal splitting method for minimizing the sum of two convex functions, researchers tried to extend the framework to the sum of $n$ convex functions after it was demonstrated that the direct extension of ADMM to 3 convex functions does not converge \cite{chen2016direct}. Despite this initial discouraging result, recent work on resolvent splitting methods for monotone inclusion provides reason for optimism. In \cite{ryu2020uniqueness}, Ryu showed that Douglas-Rachford Splitting, the resolvent splitting method underlying ADMM, is unique among splittings satisfying a set of mild assumptions unless the original problem is \emph{lifted} by embedding it in a higher dimensional space. Working in this lifted space, Ryu provided a resolvent splitting algorithm for monotone inclusion problems which are the sum of three maximal monotone operators. Shortly thereafter, Malitsky and Tam \cite{malitsky2023resolvent} provided a lifted resolvent splitting algorithm for monotone inclusion problems which are the sum of $n$ maximal monotone operators. Tam \cite{tam2023frugal} followed up on this work by generalizing the example provided in \cite{malitsky2023resolvent} to a set of possible resolvent splitting algorithms.

A number of other recent works also analyze frugal resolvent splitting methods. \cite{eckstein2009general} establishes a class of projective splitting methods which includes the option of using resolvent values computed in either the current or the previous iteration. \cite{JCM-37-778}, \cite{campoy2022product}, and \cite{condat2023proximal} all provide product space generalizations of resolvent splitting methods which can accommodate $n$ maximal monotone operators, although these do not include all available communication structures between resolvents. \cite{bredies2022degenerate} generalizes these approaches, analyzing resolvent splitting algorithms in terms of a degenerate (PSD) preconditioner and a lifted operator. Both \cite{bredies2024graph} and \cite{aragon2024forward} build on the preconditioner analysis and recognize the unweighted graph interpretation of these splitting algorithm by providing new designs based on specific graphs.

This paper shares a common $n$-operator resolvent splitting theme with these recent articles. We provide a practical resolvent splitting framework capable of designing algorithms for particular applications. Our primary contribution is the introduction of a family of semidefinite programming problems whose solutions give $n$-operator resolvent splitting algorithms. This family of semidefinite programming problems can be customized for a variety of practical considerations. In practice, desirable characteristics of a resolvent splitting method can depend on many different aspects of a problem---such as communication latency, convergence rate, per-iteration computation, and how amenable the algorithm is to parallelization. We examine each of these in turn, providing theoretical justification and methodological recommendations for design decisions frequently encountered in practice. In addition to providing a framework for creating resolvent splitting algorithms, we provide several theorems which characterize them, extending and generalizing results in \cite{ryu2020uniqueness, malitsky2023resolvent, tam2023frugal}. We also apply the Performance Estimation Problem (PEP) framework from \cite{drori2014performance}, \cite{ryu2020operator}, and \cite{pepit2022} to provide convergence rate guarantees for these algorithms, and apply our optimization framework to design algorithms which minimize the total time required to converge for a given problem class. We then use the dual of the PEP to determine the optimal step size and other parameters for a given algorithm design and set of assumptions on the monotone operators, in addition to conducting a variety of numerical experiments to provide convergence rates for specific problems, compare formulation choices, and provide a demonstration of algorithm design choices. An accompanying implementation of our contributions can found at \href{https://github.com/peterbarkley/oars/}{github.com/peterbarkley/oars/}. 

The rest of this paper proceeds as follows. In the next section, we introduce the required notation and background.
In Section \ref{Sec:Formulation} we formulate the family of semidefinite programming problems which generate custom resolvent splitting algorithms. Sections \ref{Sec:Examples} and \ref{Sec:Objectives} then provide a large set of examples for generating resolvent splitting algorithms under various practical considerations using the constraints and objective function of the SDP, respectively. Section \ref{Sec:PEP} describes a set of PEP formulations and their duals which provides convergence rate results and the ability to optimize step size and a portion of the algorithm for specific problem classes. Section \ref{Sec:Convergence} then conducts a set of numerical experiments, in which we use the previous results to determine rate of convergence under various assumptions, and we compare the resulting splitting algorithms to those proposed by \cite{malitsky2023resolvent} and \cite{tam2023frugal}. Theorems and lemmas not proven in the main document are postponed to the appendix.

\section{Preliminaries}\label{Sec:Preliminaries}

Consider $n$ proper closed convex functions $f_1, \dots , f_{n}$ where each $f_{i}: \HH \to \mathbb{R} \cup \{\infty\}$ for some Hilbert space $\HH$ that contains the problem's decision variable. Our goal is to solve the problem
\begin{equation}\label{min_sum_func}
\min_{x \in \HH} \; \sum_{i=1}^{n} f_{i}(x)
\end{equation}
using a \emph{proximal splitting algorithm}, i.e. only interacting with each of the $f_{i}$ through the proximal operators $\prox_{f_{1}}, \dots, \prox_{f_{n}}$ and linear combinations thereof, where
\begin{equation}\label{Def:Prox}
\prox_{f_i}(x) := \argmin_{w \in \HH} \; f_{i}(w) + \frac{1}{2} \|w - x\|^{2}.
\end{equation}

If the relative interiors of the domain of each $f_{i}$ has nonempty intersection, then finding $x \in \HH$ such that 
\begin{equation}\label{zero_subdiff}
0 \in \sum_{i=1}^n \partial f_{i}(x),
\end{equation}
where $\partial f_{i}$ denotes the subdifferential of the convex function $f_{i}$, is necessary and sufficient for minimizing \eqref{min_sum_func} \cite[Theorems 23.8 and 23.5]{rockafellar1970convex}.

Because the subdifferential of a proper closed convex function $f_{i}$, considered as a set-valued operator $\partial f: \HH \setto \HH$, is a maximal monotone operator, a generalization of \eqref{zero_subdiff} is the monotone inclusion problem: for $n$ maximal monotone operators $A_{1}, \dots, A_{n}$, where each $A_{i}: \HH \setto \HH$, find $x \in \HH$ such that 
\begin{equation}\label{zero_in_monotone}
0 \in \sum_{i=1}^{n} A_{i}(x).
\end{equation}
Throughout the rest of this paper, we assume that the solution set of \eqref{zero_in_monotone}, denoted $\zer{\sum_{i=1}^n A_i}$, is nonempty. Though subdifferentials of convex functions are maximally monotone, maximally monotone operators are only the subdifferential of some convex function if they have additional structure (see e.g. \cite{rockafellar1970maximal}). For this reason, problem \eqref{zero_in_monotone}, which we focus on in the remainder of the paper, contains problems which cannot be stated in terms of convex functions in \eqref{zero_subdiff}, despite the fact that minimizing the sum of convex functions is our primary motivation.

A \emph{resolvent splitting} is a generalization of a proximal splitting which solves \eqref{zero_in_monotone} and is constructed using only scalar multiplication, addition, and the resolvents of each operator $A_{i}$. The resolvent of an operator $A$ is defined as $J_{A} = (\I + A)^{-1}$. When $A$ is monotone, $J_{A}$ is single-valued, and when $A$ is maximal monotone $J_{A}$ has domain $\mathcal{H}$ \cite[Prop. 23.7]{bauschke_combettes}. A resolvent splitting is a \textit{proximal} splitting when the monotone operators in \eqref{zero_in_monotone} are subdifferentials of convex functions, in which case the resolvent of the subdifferential $\partial f_{i}$ is the proximal operator of the convex function $f_{i}$. A resolvent splitting is called \textit{frugal} if each resolvent $J_{A_{i}}$ is used at most once in each iteration. Many existing splitting algorithms can be framed as frugal resolvent splittings. The oldest is Douglas-Rachford splitting \cite{douglas1956numerical, eckstein1992douglas}, where the solution of $0 \in A_1(x) + A_2(x)$ can be found by lifting $x$ to $\v{x}=(x_1, x_2) \in \HH^2$, choosing $\gamma \in (0, 2)$, and iterating 
\begin{subequations}\label{douglas-rachford}
\begin{align}
x_1 &= J_{A_1}(z^k)\\
x_2 &= J_{A_2}(2x_1-z^k)\\
z^{k+1} &= z^k + \gamma (x_2 - x_1).
\end{align}
\end{subequations}
At a fixed point for $z$, $x_1 = x_2 = x^*$ provides the zero for the sum. In \citep{ryu2020uniqueness}, Ryu develops a frugal splitting over three maximal monotone operators, where for $\gamma \in (0, 1)$, the solution for $0 \in A_1(x) + A_2(x) + A_3(x)$ can be found by iterating
\begin{subequations}\label{ryu}
\begin{align}
x_1 &= J_{A_1}(z^k_1)\\
x_2 &= J_{A_2}(x_1 + z^k_2)\\
x_3 &= J_{A_3}(x_1 - z^k_1 + x_2 - z^k_2)\\
z_1^{k+1} &= z_1^k + \gamma (x_3 - x_1)\\
z_2^{k+1} &= z_1^k + \gamma (x_3 - x_2).
\end{align}
\end{subequations}
Similarly, at a fixed point for $\v{z} = (z_{1}, z_{2})$, $x_1 = x_2 = x_3 = x^*$ provides the zero for the monotone inclusion. Ryu's algorithm was extended by Tam \citep{tam2023frugal} to split across $n$ operators, solving $0 \in \sum_i A_i(x)$ by iterating
\begin{subequations}\label{ryu-ext}
\begin{align}
x_i &= J_{A_i}\left(\frac{2}{n-1} \sum_{j=1}^{i-1}x_j + \sqrt{\frac{2}{n-1}}z^k_i\right) \quad &\forall i \in \left\{ 1, \dots, n-1\right\} \label{ryu-ext-begin}\\
x_n &= J_{A_n}\left(\frac{2}{n-1} \sum_{j=1}^{n-1}x_j - \sqrt{\frac{2}{n-1}}\sum_{j=1}^{n-1}z^k_j\right)\\
z^{k+1}_i &= z^k_i + \gamma \sqrt{\frac{2}{n-1}}\left(x_n-x_i\right) \quad &\forall i \in \left\{ 1, \dots, n-1\right\}, \label{ryu-ext-end}
\end{align}
\end{subequations}
where $\gamma$ can take any value in $(0,1)$. In \cite{malitsky2023resolvent}, Malitsky and Tam develop another splitting over $n$ operators given by
\begin{subequations}\label{mt}
\begin{align}
x_1 &= J_{A_1}\left(z^k_1\right) \label{mt-begin}\\
x_i &= J_{A_i}\left(x_{i-1} + z^k_i - z^k_{i-1}\right) \quad &\forall i \in \left\{ 2, \dots, n-1\right\}\\
x_n &= J_{A_n}\left(x_1 + x_{n-1} - z^k_{n-1}\right)\\
z_i^{k+1} &= z_i^k + \gamma \left(x_{i+1} - x_i \right) \quad &\forall i \in \left\{ 1, \dots, n-1\right\}, \label{mt-end}
\end{align}
\end{subequations}
where $\gamma$ is similarly permitted to be any value in $(0,1)$. In both \eqref{ryu-ext} and \eqref{mt}, the $x_i$ are equal at any fixed point for $\v{z}$, and provide the solution to \eqref{zero_in_monotone}.

This recent flurry of $n$-operator resolvent splitting methods is a result of an observation of the authors in \cite{ryu2020uniqueness} (for three operators) and \cite{malitsky2023resolvent} (for $n$ operators), which introduces a parametrization of frugal resolvent splitting methods. These authors' primary use of this parametrization is to prove a lower bound on the dimension of the vector $\v{z}^{k}$, the variable updated in each iteration of the frugal resolvent splitting method. By altering the values used in Ryu's parametrization, subsequent authors were able to quickly generate new frugal resolvent splitting methods. In \cite{tam2023frugal}, Tam characterizes a large set of these parameters which yield convergent frugal splitting algorithms. In this paper, our Theorem \ref{main_theorem} provides a generalization which demonstrates convergence of an even larger set of these parameters.

Because it will be useful in the remainder, we describe this parametrization next. Since the resolvent splittings are assumed to be frugal, each resolvent is evaluated at most once per iteration. Assume without loss of generality that, in each iteration, the ordering of resolvent evaluation is $J_{A_{1}}, J_{A_{2}}, \dots , J_{A_{n}}$, so that when $i < j$ the input to resolvent $J_{A_{i}}$ does not depend on the evaluation of resolvent $J_{A_{j}}$. We let $\v{x} = (x_{1}, \dots x_{n})\in \HH^{n}$ be the concatenated vector of resolvent outputs, and $\v{z} = (z_{1}, \dots, z_{d}) \in \HH^{d}$ be the vector which is updated in each iteration. 
The three ingredients of one iteration of a frugal resolvent splitting are (a) form $y_{i}$, the input to resolvent $A_{i}$, for all $i \in [n]$, (b) evaluate the resolvent $A_{i}$ at $y_{i}$ and (c) update the vector $\v{z}$. At convergence, all $x_i$ will be equal to the solution of \eqref{zero_in_monotone}. Recall that a resolvent splitting can be described using only scalar multiplication, addition, and resolvent evaluation, so these update steps can be written using matrices $B \in \mathbb{R}^{n \times d}$, $T \in \mathbb{R}^{d \times d}$, $R \in \mathbb{R}^{d \times n}$ and lower triangular $L \in \mathbb{R}^{n \times n}$ as follows.
\begin{subequations}\label{scalar_general}
\begin{align}
y_{i} &= \sum_{j=1}^{d} B_{ij}\, z^{k}_{j} + \sum_{j \leq i} L_{ij}\, x_{j} \quad & \forall i \in \{1, \dots, n\} \\
\text{Solve} \quad x_{i} &= J_{A_{i}}(y_{i}) \quad &\forall i \in \{1, \dots, n\}\\
z^{k+1}_{i} &= \sum_{j=1}^{d} T_{ij} \, z^{k}_{j} + \sum_{i=1}^{n} R_{ij} \, x_{j} \quad &\forall i \in \{1, \dots, d\}.
\end{align}
\end{subequations}

The updates \eqref{scalar_general} can be written more concisely using matrices constructed via the Kronecker product, which we denote by $\otimes$. Let $\I$ denote the identity operator on $\mathcal{H}$, and define $\v{B} = B \otimes \I$, $\v{L} = L \otimes \I$, $\v{T} = T \otimes \I$, and $\v{R} = R \otimes \I$. Denote by $\v{A}:\HH^{n} \setto \HH^{n}$ the maximal monotone operator formed by applying the monotone operators $A_{i}$ elementwise, so
$$\v{A}\begin{pmatrix} x_{1} \\ \vdots\\ x_{n}\end{pmatrix} = \begin{pmatrix}A_{1}(x_{1}) \\ \vdots\\ A_{n}(x_{n}) \end{pmatrix},$$
where each $x_{i} \in \HH$. Then, eliminating the $y_{i}$ variables for conciseness, the updates \eqref{scalar_general} can be written in terms of the concatenated vectors $\v{z}$ and $\v{x}$ as follows.
\begin{subequations}\label{matrix_general}
\begin{align}
\v{x} &= J_{\v{A}}(\v{B} \v{z}^{k} + \v{L} \v{x}) \label{general_form1}\\
\v{z}^{k+1} &= \v{T} \v{z}^{k} + \v{R} \v{x}\label{general_form2}.
\end{align}
\end{subequations}

In this paper, we focus on a restriction of the very general parametrization in \eqref{matrix_general} that allows us to prove convergence under a set of mild conditions. Specifying the matrices $L$, $M$, and the value $\gamma$, we let $\v{R} = \gamma \v{M}$, $\v{B} = -\v{M}^{T}$, and $\v{T}$ be the identity. The frugal resolvent splitting algorithm in \eqref{matrix_general} then becomes
\begin{subequations}\label{d_iteration}
\begin{align}
\mk{x} &= J_{\v{A}}\left(-\mk{M}^{T} \mk{z}^{k} + \mk{L} \mk{x}\right) \label{d_itr1}\\
\mk{z}^{k+1} &= \mk{z}^{k} + \gamma \mk{M} \mk{x}. \label{d_itr2}
\end{align}
\end{subequations}
Each of the aforementioned resolvent splitting algorithms in \eqref{douglas-rachford}, \eqref{ryu}, \eqref{ryu-ext}, and \eqref{mt} can be written in the form \eqref{d_iteration} with appropriate choice of $L$, $M$, and range of $\gamma$. 

In \citep{tam2023frugal}, the author notes that when $d > n$, the dimension of the variable $\v{z}$ in the iteration \eqref{d_iteration} can be reduced by performing the substitution $\v{v} = -\v{M}^{T} \v{z}$, defining $\v{W} = \v{M}^{T} \v{M}$, and multiplying the equation \eqref{d_itr2} by $-\v{M}^{T}$ to obtain the iteration
\begin{subequations}\label{n_iteration}
\begin{align}
\mk{x} &= J_{\mk{A}}\left(\mk{v}^{k} + \mk{L} \mk{x}\right)\label{n_itr1}\\
\mk{v}^{k+1} &= \mk{v}^{k} - \gamma \v{W} \mk{x}.\label{n_itr2}
\end{align}
\end{subequations}
In \eqref{n_iteration}, the variable $\v{v} \in \HH^{n}$ is updated each iteration, which requires less memory than $\v{z} \in \HH^{d}$ when $d > n$. It is worth noting that this substitution implies that $\v{v}^0$ must be in the range of $\v{M}^T$.

In the remainder of the paper, we will derive convergent algorithms for both iterations \eqref{d_iteration} and \eqref{n_iteration} under a variety of practical assumptions. We provide a framework to design new resolvent splitting algorithms that accommodate decentralized computation. We also investigate the performance of these algorithms by providing theoretical results and practical recommendations to guide the generation of these algorithms while prioritizing fast convergence rates, amenability to parallelization, and minimal per-iteration computation time.

Before proceeding, we establish some notation that we will be useful in the rest of the paper. $\Sp^n$ will be used to represent the set of positive semi-definite matrices of dimension $n$ and $\mathbb{S}^{n}$ the set of symmetric matrices of dimension $n$. $\succeq$ provides the Loewner ordering of the set of symmetric matrices, so $Z \succeq W  \implies Z-W \in \Sp^n$. $\I$ is the identity in the appropriate space in which it is used. Given a matrix $K \in \mathbb{S}^{n}$, $\lambda_{1}(K), \dots, \lambda_{n}(K)$ denotes the eigenvalues of $K$ listed in increasing order. $\1$ is a ones vector in the appropriate space. All bold vectors and matrices are lifted, so bolded vectors contain multiple copies of $\HH$, and bolded operators operate on these lifted vectors. For example, $\v{M}: \HH^n \to \HH^d$ is $\v{M} = M \otimes \I$ for $M \in \R^{d \times n}$. We also define the transpose of a lifted operator as a lifting of the transposed matrix, so $\v{M}^T = M^T \otimes \I$. For a natural number $n$, we denote by $[n]$ the set of natural numbers $\{1, \dots , n\}$. We denote by $\mathcal{S}^{C}$ the complement of a set $\mathcal{S}$. $\text{Tr}(\cdot)$ defines the trace of a matrix. $\norm{\cdot}$ denotes the Euclidean norm when applied to vectors and the spectral norm when applied to matrices. $e_i$ refers to the unit vector with 1 in position $i$, and $L_{\cdot i}$ and $L_{i \cdot}$ refer to the $i$th column and row of $L$ respectively. We denote by $\iota_{C}$ the indicator function on a set $C$ and for a set-valued operator $A$ we denote by $A^{-\ovee}$ the operator with $A^{-\ovee}(x) = - A^{-1} (-x)$.

\section{Semidefinite Program for Resolvent Splitting Design}\label{Sec:Formulation}

In this section we formulate an optimization problem that allows one to solve for the matrices $\v{M}$ and $\v{L}$ in \eqref{d_iteration}, or the matrices $\v{W}$ and $\v{L}$ if using iteration \eqref{n_iteration}. We will call a specific choice of the parameters in algorithms \eqref{d_iteration} or \eqref{n_iteration} a \emph{design} for either of these algorithms. By considering various objective functions and additional constraints, we show that we can recover several of the proximal splitting algorithms from the previous section. Our next theorem is the primary contribution of this paper. 

\begin{theoremrep}\label{main_theorem}

Let $\phi: \mathbb{S}_{+}^{n} \times \mathbb{S}^{n} \to (-\infty, \infty]$ be any proper lower semicontinuous function. Let $\gamma \in (0,1)$, $\mathcal{C} \subseteq \mathbb{S}_{+}^{n} \times \mathbb{S}^{n}$, $c > 0$, and $\varepsilon \in [0, 2)$. Consider the following semidefinite programming problem,
\begin{subequations}\label{main_prob}
\begin{align}
\minimize_{W, Z} \quad &\phi(W, Z) \label{obj}\\
\mathrm{subject\, to} \quad& W \1 = 0 \label{con1}\\
& \lambda_{1}(W) + \lambda_{2}(W) \geq c \label{con2}\\
& Z - W \succeq 0 \label{con3}\\
& \1^{T} Z \1 = 0  \label{con4}\\
&\mathrm{diag}(Z) = Z_{11}\1 \label{con5}\\
& 2 -\varepsilon \leq Z_{11} \leq 2 + \varepsilon \label{con6}\\
& (W, Z) \in \mathcal{C} \label{con7}\\
& W \in \mathbb{S}^{n}_{+}, \; Z \in \mathbb{S}^{n}. \label{con8}
\end{align}
\end{subequations}
 
Any solution $W, Z$ to \eqref{main_prob} produces a convergent resolvent splitting algorithm for the iteration \eqref{n_iteration} by solving for the lower triangular matrix $L$ in $Z = 2 \I - L - L^{T}$. 
This $L$, paired with any $M$ for which $W = M^{T} M$, also produces a convergent resolvent splitting algorithm for the iteration \eqref{d_iteration}. In both iterations, $\v{x} \in \HH^{n}$ converges weakly to a vector for which $x_{i} = x^*$ for all $i \in [n]$, where $x^* \in \HH$ solves the monotone inclusion \eqref{zero_in_monotone}. Moreover, when the operators $A_{i}$ in \eqref{zero_in_monotone} are all $\mu$-strongly monotone for some $\mu > 0$, the valid range of $\gamma$ can be extended to $(0, 1 + 2\mu/\|W\|)$.
\end{theoremrep}
\begin{proof}

The proof proceeds as follows: we first demonstrate that the operator $T_{\v{A}}(\v{z}) = \v{z} + \gamma \v{M} \v{x}$ where $\v{x} = J_{\v{A}}\left(-\v{M}^T \v{z} + \v{L}\v{x}\right)$ is $\gamma$-averaged non-expansive for $\mu$-strong maximal monotone operator $\v{A} = \left(A_1(x_1), \dots, A_n(x_n)\right)$.  We then show that the existence of a zero for $\sum_{i=1}^n A_i$ is equivalent to the existence of a fixed point of $T_{\v{A}}$, and therefore by the $\gamma$-averaged nonexpansivity of $T_{\v{A}}$ we have weak convergence of $(\v{z}^k)$ to a fixed point of $T_{\v{A}}$. Finally, we then show that iterates $(\v{x}^k)$ converge to a unique weak cluster point, which is $\1 \otimes x^*$, where $0 \in \sum_{i=1}^n A_i(x^*)$.

Let $W$, $Z$, and $L$ be feasible solutions to \eqref{main_prob}, and $M \in \R^{d \times n}$ such that $M^T M = W$. Let $\v{M}, \v{W}$, $\v{Z}$, $\v{L}$ be the lifted Kronecker products of $M$, $W$, $Z$, and $L$, respectively. 
We include the $\mu$-strong monotonocity case directly in the main proof by allowing $\mu \geq 0$, though $\mu = 0$ is not included in conventional definitions of strong monotonicity.
For $\mu \geq 0$, we therefore define maximal $\mu$-strongly monotone $\v{A}$ as
$$\v{A}\begin{pmatrix} x_{1} \\ \vdots\\ x_{n}\end{pmatrix} = \begin{pmatrix}A_{1}(x_{1}) \\ \vdots\\ A_{n}(x_{n}) \end{pmatrix},$$
where each $x_i \in \HH$.
For $\v{z} \in \HH^d$, define $T_{\v{A}}: \HH^d \to \HH^d$ as 
$$T_{\v{A}}(\v{z}) = \v{z} + \gamma \v{M} \v{x} , \quad \text{where} \quad \v{x} = J_{\v{A}}\left(-\v{M}^T \v{z} + \v{L} \v{x}\right)$$
By the definition of the resolvent and the maximal monotonicity of $\v{A}$, we know that for $\v{z}^1, \v{z}^2 \in \HH^d$ and $\v{x}^1, \v{x}^2 \in \HH^n$, 
\begin{align}
    &\v{x}^i = J_{\v{A}}(-\v{M}^T \v{z}^i + \v{L} \v{x}^i)\\
    \implies& \v{A}\left(\v{x}^i\right) \ni -\v{M}^T \v{z}^i + (\v{L} - \I) \v{x}^i.
\end{align}
Let $\v{z} = \v{z}^1 - \v{z}^2$, $\v{x} = \v{x}^1 - \v{x}^2$, and $\v{z}^+ = T_{\v{A}}(\v{z}^1)-T_{\v{z}}(\v{z}^2)$.
By the $\mu$-strong monotonicity of $\v{A}$ we have:
\begin{align} 
    \left\langle \v{x}^1 - \v{x}^2, -\v{M}^T \v{z}^1 + (\v{L} - \I) \v{x}^1 - \left(-\v{M}^T \v{z}^2 + (\v{L} - \I) \v{x}^2\right) \right\rangle &\geq \mu \norm{\v{x}^1 - \v{x}^2}^2\\
\implies \left\langle \v{x}, -\v{M}^T \v{z} + (\v{L} - \I) \v{x} \right\rangle & \geq \mu \norm{\v{x}}^2\label{first}
\end{align}
Considering just the left-hand side of the inequality \eqref{first}, symmetrizing the quadratic form $\left\langle \v{x}, (\v{L} - \I) \v{x} \right\rangle$ in light of the definition of $L$, and noting that $W \preceq Z$ by \eqref{con3}, we have the following simplifications,
\begin{align}
    =& \left\langle \v{x}, -\v{M}^T \v{z} \right\rangle +  \left\langle \v{x}, (\v{L} - \I) \v{x} \right\rangle\\
    =& \left\langle \v{x}, -\v{M}^T \v{z} \right\rangle - \frac{1}{2}\left\langle \v{x}, \v{Z} \v{x} \right\rangle\\
    \leq& \left\langle \v{x}, -\v{M}^T \v{z} \right\rangle - \frac{1}{2}\left\langle \v{x}, \v{W} \v{x} \right\rangle
\end{align}
Considering the right-hand side of \eqref{first}, we note that $\norm{\v{x}}^2 \geq \frac{1}{\norm{\v{W}}}\left \langle \v{x}, \v{W} \v{x} \right \rangle$, where $\|\v{W}\|$ is the operator norm in $\mathcal{H}^{n}$. Noting that $\|\v{W}\| = \|W\|$, we have 
\begin{align}
\left\langle \v{x}, -\v{M}^T \v{z} \right\rangle - \frac{1}{2}\left\langle \v{x}, \v{W} \v{x} \right\rangle \geq& \frac{\mu}{\norm{W}}\left \langle \v{x}, \v{W} \v{x} \right \rangle\\
\left\langle \v{x}, -\v{M}^T \v{z} - \left(\frac{1}{2}+\frac{\mu}{\norm{W}}\right) \v{W} \v{x} \right\rangle \geq& 0\\
\left\langle -\v{M}\v{x}, \v{z}\right\rangle - \left(\frac{1}{2}+\frac{\mu}{\norm{W}}\right)\left\langle \v{M}\v{x}, \v{M} \v{x} \right\rangle \geq& 0\label{second}\\
\end{align}
By definition of $\v{z}^{+}$, $\v{M}\v{x} = \frac{\v{z}^+ -\v{z}}{\gamma}$. Therefore, \eqref{second} implies
$$
    \frac{1}{\gamma}\left\langle \v{z} - \v{z}^+, \v{z} \right\rangle - \left(\frac{1}{2}+\frac{\mu}{\norm{W}}\right)\frac{1}{\gamma^2}\norm{\v{z}^+ -\v{z}}^2\geq 0\label{third}
$$
Applying the parallelogram law to the left side yields
\begin{align}
    \frac{1}{2\gamma}\left[\norm{\v{z}}^2 + \norm{\v{z} - \v{z}^+}^2 - \norm{\v{z}^+}^2\right]- \left(\frac{1}{2}+\frac{\mu}{\norm{W}}\right)\frac{1}{\gamma^2}\norm{\v{z} - \v{z}^+}^2 \geq& 0\\
    \frac{1}{2\gamma}\left[\norm{\v{z}}^2 + \frac{\gamma - 1 - \frac{2\mu}{\norm{W}}}{\gamma}\norm{\v{z} - \v{z}^+}^2 - \norm{\v{z}^+}^2\right] \geq& 0
\end{align}
and we therefore have:
$$\norm{\v{z}}^2 + \frac{\gamma - 1 - \frac{2\mu}{\norm{W}}}{\gamma}\norm{\v{z} - \v{z}^+}^2 \geq \norm{\v{z}^+}^2$$ and $T_{\v{A}}$ is $\gamma$-averaged for $\gamma \in (0, 1 + \frac{2\mu}{\norm{W}})$.
We now show 
$$\zer{\sum_{i=1}^{n} A_{i}} \ne \emptyset \implies \mathrm{Fix}(T_{\v{A}}) \ne \emptyset$$
If $\bar{x} \in \mathrm{zero}(\sum_{i=1}^{n} A_{i})$, then there exists a $\v{w} \in \HH^n$ such that $w_i \in A_i(\bar{x})$ and $\sum_{i=1}^n w_i = 0$.
Let $\overline{\v{x}} = \1 \otimes \bar{x}$. 

Since $M^{T} M = W$ and $\mathrm{Null}(W) = \mathrm{span}(\mathbbm{1})$, $\mathrm{Null}(M) = \mathrm{span}(\mathbbm{1})$. It follows that $\mathrm{range}(\v{M}^{T}) = \{\v{x} \in \mathcal{H}^{n}| \sum_{i=1}^{n} x_{i} = 0\}$. Note that $\v{w} + (\I - \v{L}) \overline{\v{x}} \in \mathrm{range}(\v{M}^{T})$, since both terms sum to $0$. Therefore there exists $\overline{\v{z}}$ such that $-\v{M}^{T} \overline{\v{z}} = \v{w} + (\I - \v{L}) \overline{\v{x}}$. Recalling that $\v{w} \in \v{A}(\overline{\v{x}})$, for such a $\v{\overline{z}}$ we have
\begin{align}
-\v{M}^T\overline{\v{z}} + \v{L}\overline{\v{x}} \in& \v{A}(\overline{\v{x}})+\overline{\v{x}}\\
\implies \overline{\v{x}} =& J_{\v{A}}(-\v{M}^T\overline{\v{z}} + \v{L}\overline{\v{x}})
\end{align}
Finally, we note that, because $\overline{\v{x}} = \1 \otimes \bar{x}$, $\overline{\v{x}} \in \mathrm{Null}(\v{M})$, so $\overline{\v{z}} = T_{\v{A}}(\overline{\v{z}})$ and $\overline{\v{z}} \in \mathrm{Fix}(T_{\v{A}})$. 

Since $T_{\v{A}}$ has a fixed point and is $\gamma$-averaged nonexpansive, \cite[Proposition 5.15]{bauschke_combettes} gives that for any starting point $\v{z}^0$ and sequence $(\v{z}^k)$ defined by $\v{z}^{k+1} = T_{\v{A}}(\v{z}^k)$, $\v{z}^{k+1} - \v{z}^{k}\to 0$, and $(\v{z}^k)$ converges weakly to some $\bar{\v{z}} \in \mathrm{Fix}(T_{\v{A}})$. Recall that weak convergence implies boundedness \cite[Proposition 2.40]{bauschke_combettes}, so that $(\v{z}^{k})$ is bounded. 

We next show by induction on $n$ that $(\v{x}^{k}) \in \mathcal{H}^{n}$ is bounded on each of its components $(x_{i}^{k}) \in \mathcal{H}$, so that the entire expression is bounded. Take as the base case $i = 1$. Then recalling that $L$ is lower triangular and that the resolvent is nonexpansive, we have
\begin{align}
\|x_{1}^{k}\| = \left\|J_{A_{1}} \left(- \sum_{j=1}^{d} M_{j1} z_{j}^{k} + L_{11} x_{1}^{k}\right)\right\| &\leq \left\|-\sum_{j=1}^{d} M_{j1} z_{j}^{k} + L_{11} x_{1}^{k}\right\| \\
&\leq \left\|-\sum_{j=1}^{d} M_{j1} z_{j}^{k} \right\| + |L_{11}| \|x_{1}^{k}\|.
\end{align}
Therefore
$$(1 - |L_{11}|) \|x_{1}^{k}\| \leq \left\|-\sum_{j=1}^{d} M_{j1} z_{j}^{k} \right\|$$
so when $|L_{11}| < 1$ we have
$$\|x_{1}^{k}\| \leq \frac{1}{1 - |L_{11}|} \left\|-\sum_{j=1}^{d} M_{j1} z_{j}^{k} \right\|.$$
which is bounded because $(\v{z}^{k})$ is bounded. This concludes the base case.

In the induction step, 
\begin{align}
\|x_{i}^{k}\| &= \left\|J_{A_{i}} \left(-\sum_{j=1}^{d} M_{ji} z_{j}^{k} + \sum_{j<i} L_{ij} x_{j}^{k} + L_{ii} x_{i}^{k}\right)\right\| \\
&\leq \left\|-\sum_{j=1}^{d} M_{ji} z_{j}^{k} + \sum_{j<i} L_{ij} x_{j}^{k} + L_{ii} x_{i}^{k}\right\|\\
& \leq \left\|-\sum_{j=1}^{d} M_{j1-i} z_{j}^{k} \right\| + \left\| \sum_{j<i} L_{ij} x_{j}^{k} \right\| + |L_{ii}| \left\|x_{i}^{k}\right\|
\end{align}
where $\sum_{j<i} L_{ij} x_{j}^{k}$ is bounded by the induction hypothesis. Similar to the base case, the fact that $|L_{ii}| < 1$ allows us to conclude that $x_{i}^{k}$ is bounded. Since $||\v{x}^{k}||^{2} = \sum_{i=1}^{n} ||x_{i}^{k}||^{2}$, each of which are bounded, we conclude that $(\v{x}^{k})$ is bounded. The boundedness of $(\v{x}^k)$ implies the existence of a weak sequential cluster point $\v{\tilde{x}}$ for $(\v{x}^k)$ \cite[Fact 2.27]{bauschke_combettes}. Abusing notation, let $(\v{x}^{k})$ be a subsequence that weakly converges to $\v{\tilde{x}}$.

We next reason that the cluster point $\v{\tilde{x}}$ is unique, which gives that $(\v{x}^{k}) \rightharpoonup \v{\tilde{x}}$. To do so we require two facts: that $\v{\tilde{x}}$ is of the form $\1 \otimes \tilde{x}$ for some $\tilde{x} \in \mathcal{H}$ and that $\v{\tilde{x}} = J_{\v{A}}(-\v{M}^{T} \bar{\v{z}} + \v{L} \v{\tilde{x}})$. The first of these is easy to establish---because $\v{z}^{k+1} - \v{z}^{k} \to 0$ and $\v{z}^{k+1} = \v{z}^{k} + \gamma \v{M} \v{x}^{k}$, we know $\v{M} \v{\tilde{x}} = \lim_{k \to \infty} \v{M} \v{x}^{k} = 0$. Since $\v{\tilde{x}} \in \mathrm{Null}(\v{M})$ all of its components are equal, so $\v{\tilde{x}} = \1 \otimes \tilde{x}$ for some $\tilde{x}$.

The second required fact, that 
\begin{equation}\label{resolvent_fixedpoint}
\v{\tilde{x}} = J_{\v{A}}(-\v{M}^{T} \bar{\v{z}} + \v{L} \v{\tilde{x}}),
\end{equation}
 requires more effort. It suffices to show that $-\v{M}^{T} \bar{\v{z}} + (\v{L} - \I) \v{\tilde{x}} \in \v{A}(\v{\tilde{x}})$. So, letting $\v{v} \in \v{A}(\v{u})$ for $\v{u} \in \mathrm{Dom}(\v{A})$, we want to show that
\begin{equation}
\left\langle \v{\tilde{x}} - \v{u}, -\v{M}^{T} \bar{\v{z}} + (\v{L} - \I)(\v{\tilde{x}}) - \v{v} \right\rangle \geq 0.
\end{equation}
This can be shown by applying limiting arguments to the expression
\begin{equation} \label{limit}
\left\langle \v{x}^{k} - \v{u}, -\v{M}^{T} \v{z}^{k} + (\v{L} - \I)(\v{x}^{k}) - \v{v} \right\rangle \geq 0
\end{equation}
using that $\v{M} \v{x}^{k} \to 0$, $\v{z}^{k} \rightharpoonup \bar{\v{z}}$ and $\v{x}^{k} \rightharpoonup \v{\tilde{x}}$. The only complicated term in \eqref{limit} is $\langle \v{x}^{k}, (\v{L} - \I) \v{x}^{k} \rangle$, which can be shown to converge to $0$ using an $\epsilon/3$ argument by adding and subtracting $\1 \otimes x^{k}_{1}$, the first component of $\v{x}^{k}$ lifted into $\mathcal{H}^{n}$.

Having established that $\v{\tilde{x}} = \1 \otimes \tilde{x}$ and $\v{\tilde{x}} = J_{\v{A}}(-\v{M}^{T} \bar{\v{z}} + \v{L} \v{\tilde{x}})$, we now reason that $\tilde{x}$ must be unique. Consider the first component of the equation \eqref{resolvent_fixedpoint}, 
\begin{equation}
\tilde{x}_{1} = J_{A_{1}}\left(-(\v{M}^{T} \bar{\v{z}})_{1} + L_{11} \tilde{x}_{1}\right).
\end{equation}
Expanding the definition of the resolvent and simplifying, this implies
\begin{equation}\label{expanded_resolvent}
\frac{-(\v{M}^{T} \bar{\v{z}})_{1}}{1-L_{11}} - \tilde{x}_{1} \in \frac{1}{1-L_{11}}A_{1}(\tilde{x}_{1}).
\end{equation}
Because $\abs{L_{11}} < 1$, $\frac{1}{1-L_{11}}A_{1}$ is maximal monotone. So $\tilde{x}_{1} = J_{\frac{1}{1-L_{11}}A_{1}}\left(\frac{-1}{1-L_{11}}(\v{M}^{T} \bar{\v{z}})_{1}\right)$ is therefore unique. The fact that all components of $\v{\tilde{x}}$ are equal then gives that $\v{\tilde{x}}$ is unique. Since $\v{x}^{k}$ has a unique weak cluster point $\v{\tilde{x}}$, it weakly converges to that point. Summing across the components of the containment $-\v{M}^{T} \bar{\v{z}} + (\v{L} - \I) \v{\tilde{x}} \in \v{A}(\v{\tilde{x}})$ yields $0 \in \sum_{i=1}^{n} A_{i}(\tilde{x})$, so $\v{\tilde{x}}$ solves \eqref{zero_in_monotone}. This proves the claimed property of \eqref{d_iteration}.

Finally, since iteration \eqref{n_iteration} corresponds directly to \eqref{d_iteration} with change of variable $\v{v} = -\v{M}^T \v{z}$, the weak convergence of $(\v{z}^k)$ and $(\v{x}^k)$ also implies the convergence of \eqref{n_iteration}. 
\end{proof}

Theorem \ref{main_theorem}, which proves algorithmic convergence for the zero of any sum of $n$ monotone operators, provides particular insight in the case of $\mu$-strongly monotone operators. When all operators are $\mu$-strongly monotone (as is the case if they are the subdifferentials of $\mu$-strongly convex functions), the permissible range of $\gamma$ extends beyond one (the bound established in \cite{malitsky2023resolvent,tam2023frugal}). In section \ref{Sec:PEP}, where we provide a technique for selecting a $\gamma$ which yields the best contraction factor for certain classes of monotone operators, we observe that $\gamma$ should often be chosen to be greater than or equal to one. We next discuss the implications of Theorem \ref{main_theorem}, postponing its proof to the appendix.

The first aspect of problem \eqref{main_prob} worth addressing is its computational difficulty. When the set $\mathcal{C}$ and the function $\phi$ are convex, problem \eqref{main_prob} is convex. Excepting the general purpose objective $\phi$ and constraint set $\mathcal{C}$, the problem can be written using only affine and semidefinite cone constraints. Constraint \eqref{con2} is perhaps the only constraint that is not immediately recognizable as convex, but we recall that the sum of the $k$ smallest eigenvalues of a PSD matrix is a concave function. Constraint \eqref{con2} can be written using the semidefinite cone as by introducing auxiliary variables $s \geq 0$ and $Y \in \mathbb{S}_{+}^{n}$ and using the constraints
\begin{align}
W + Y - s \mathrm{I} &\succeq 0\\
2 s - \mathrm{Tr}(Y) & \geq c,
\end{align}
see e.g. \cite{ben2001lectures}. Therefore, given appropriate choice of objective function $\phi$ and additional constraint set $\mathcal{C}$, problem \eqref{main_prob} is a semidefinite program which can be solved at scale using a variety of possible algorithms \cite{vandenberghe1996semidefinite, yurtsever2021scalable}.

One crucial aspect of problem \eqref{main_prob} is that the combination of constraints \eqref{con1} and \eqref{con2} combine to define the null space of $W$ exactly. Because $W \1 = 0$, we have $\mathrm{span}(\1) \subseteq \mathrm{Null}(W)$ and $\lambda_{1} = 0$. Constraint \eqref{con2} then gives that $\lambda_{2} > 0$, so that the dimension of the eigenspace with eigenvalue $0$ is $1$ and $\mathrm{span}(\1) = \mathrm{Null}(W)$. Constructing $W$ so that $\mathrm{Null}(W) = \mathrm{span}(\mathbbm{1})$ is an important part of the proof of Theorem \ref{main_theorem}. Constraint \eqref{con3} implies $Z \succeq 0$. Constraint \eqref{con4} ensures $\1^T L \1 = n$. Constraints \eqref{con1}-\eqref{con4} taken together imply that $\mathrm{Null}(Z) = \mathrm{span}(\1)$. Constraints \eqref{con5}-\eqref{con6} give that $Z$ has a constant diagonal with value strictly between 0 and 4.

Utilizing the matrices generated by solving \eqref{main_prob} in the iterations \eqref{d_iteration} or \eqref{n_iteration} requires solving a fixed point equation in $\v{x}$ in each iteration, equations \eqref{d_itr1} and \eqref{n_itr1}, respectively. The lower triangular nature of $L$ is a critical part of this computation, and makes solving for $\v{x}$ straightforward. When $L$ is \emph{strictly} lower triangular, $\v{x}$ can be easily found by sequentially computing $\v{x}_{i} = J_{A_{i}}(\v{v}^{k}_{i} + \sum_{j < i} L_{ij} \v{x}_{j})$ for each $i \in [n]$. When $L$ is lower triangular but not strictly so and $L_{ii} \ne 1$, the elementwise fixed point equation can be written
\begin{align}\label{standard_n_iter}
\v{x}_{i} &= J_{A_{i}}\left(\v{v}^{k}_{i} + \sum_{j < i} L_{ij} \v{x}_{j} + L_{ii} \v{x}_{i}\right).
\end{align}
Manipulating this expressing using the definition of the resolvent, this fixed point equation is satisfied if and only if
\begin{align}
\v{x}_{i} = J_{\frac{1}{1 - L_{ii}} A_{i}}\left(\frac{1}{1-L_{ii}} \v{v}_{i}^{k} + \sum_{j < i} \frac{L_{ij}}{1 - L_{ii}} \v{x}_{j}\right).
\end{align}
Since \eqref{main_prob} requires $L_{ii}$ to be constant and less than 1, $\sum \frac{1}{1-L_{ii}}A_i(x) = 0 \implies \sum A_i(x) = 0$. In this way, the fixed point equation can again be solved directly as in the strictly lower triangular case, with the exception that the scaled resolvent must now be evaluated on a scaled version of the original input. This is equivalent to executing \eqref{standard_n_iter} with $\tilde{\v{A}} = \frac{1}{1 - L_{ii}}\v{A}$, $\tilde{W}= \frac{1}{1 - L_{ii}}W$ and $\tilde{L}= \frac{1}{1 - L_{ii}}(L-L_{ii}\I)$. Therefore when $L$ is lower triangular without being strictly so, the updates \eqref{d_itr1} and \eqref{n_itr1} can still be easily computed.

In addition to the convergence of the $\v{x}$ iterates in \eqref{n_iteration}, the $\v{v}$ iterates can be used to compute the Attouch-Th\'era dual solution of a lifted version of the problem \eqref{zero_in_monotone}. In the following theorem, we denote by $\Delta$ the subspace
$$\Delta = \{\v{x} \in \mathcal{H}^{n}|\;\v{x} = \1 \otimes x \text{ for some } x \in \mathcal{H}\}.$$
\begin{theoremrep}\label{attouch-thera}
Let $\v{v}^{*}$ and $\v{x}^{*}$ be limits of the algorithm \eqref{n_iteration}. Define $\v{u}^{*} = \v{v}^{*} + (\v{L} - \I) \v{x}^{*}$. Then $\v{u}^{*}$ is the solution to the Attouch-Th\'era dual for the problem
\begin{equation} \label{at_primal}
 \find_{\v{x} \in \HH^n} 0 \in \left(\v{A} + \partial \iota_\Delta\right)\v{x},
\end{equation}
which is,
\begin{equation} \label{at_dual}
\find_{\v{u} \in \HH^n} 0 \in \left(\v{A}^{-1} + \left(\partial \iota_\Delta\right)^{-\ovee} \right)\v{u}.
\end{equation}
\end{theoremrep}
\begin{proof}

To show that $\v{u}^{*} = \v{v}^{*} - (\I - \v{L}) \v{x}^{*}$ solves \eqref{at_dual}, we need to show that
\begin{equation}\label{at_to_show}
0 \in \left(\v{A}^{-1} + \left(\partial \iota_\Delta\right)^{-\ovee} \right)\v{u}^*.
\end{equation}
Phrased differently, we need to show that there are $\v{y} \in \v{A}^{-1}(\v{u}^{*})$ and $\v{s} \in \partial \iota_{\Delta}^{-\ovee}$ such that $0 = \v{y} + \v{s}$. The subdifferential $\partial \iota_\Delta$ is the set $\Delta^{\perp}$ since the subdifferential of an indicator function is the normal cone to the set defining the indicator function. Since $\Delta$ is a linear subspace, its normal cone is $\Delta^{\perp}$. Therefore $\left(\partial \iota_\Delta\right)^{-\ovee}$, which is $\left(\partial \iota_\Delta\right)^{-1}$ in this setting, has domain $\Delta^{\perp}$ and returns $\Delta$ for every input in $\Delta^{\perp}$. We next show that $\v{x}^{*} \in \v{A}^{-1}(\v{u}^{*})$ and $\v{x}^{*} \in \Delta$, so that \eqref{at_to_show} is solved by taking $\v{y} = \v{x}^{*}$ and $\v{s} = -\v{x}^{*}$.

From Theorem \ref{main_theorem}, we have $\v{x}^{*} = \1 \otimes x^{*}$ for some $x^{*} \in \mathcal{H}$, so $\v{x}^{*} \in \Delta$ and $-\v{x}^{*} \in \Delta$. We also have $\v{v}^{*} \perp \Delta$ since $\v{v}^{*} \in \mathrm{range}(\v{W})$. Theorem \ref{main_theorem} also points out that $\1^T (\v{L} - \I) \v{x}^{*} = 0$, so $(\v{L} - \I) \v{x}^{*} \in \Delta^\perp$. So $\v{u}^* = \v{v}^{*} + (\v{L} - \I) \v{x}^{*} \in \Delta^\perp$, and $-\v{u}^* \in \Delta^\perp$. Therefore $-\v{x}^* \in \left(\partial \iota_\Delta\right)^{-\ovee}(\v{u}^*)$. From \eqref{n_itr1}, we have $\v{x}^{*} = J_{\v{A}}\left(\v{v}^{*} + \v{L} \v{x}^{*}\right)$, so that $\v{u}^{*} = \v{v}^{*} + (\v{L} - \I) \v{x}^{*} \in \v{A}(\v{x}^{*})$ and $\v{x}^* \in \v{A}^{-1}(\v{u}^*)$. This demonstrates the required properties of $\v{x}^{*}$ and $\v{u}^*$ and proves the result.
\end{proof}
The duality result in Theorem \ref{attouch-thera} is especially useful in the context of warmstarting.
With any prior knowledge of values $\v{x}$ or $\v{u}$ close to $\v{x}^*$ or $\v{u}^*$, one can warm start the algorithm \eqref{n_iteration} with $\v{v}^0 = \v{u} + (\I - \v{L})\v{x}$.
If $A_i = \partial f_i$ for some set of closed, convex, and proper functions $f_i$, a similar result holds for Fenchel and Lagrangian duality in iteration \eqref{d_iteration}. In this case, one can show, for $M \in \R^{n-1 \times n}$ and $\v{u}^* \in \HH^{n-1}$, that $\v{u}^* = \v{z}^* - (\v{M}^T)^\dagger (\v{L}- \v{\I}) \v{x}^*$ as the dual solution of the Lagrangian $g(\v{x}, \v{u}) = \sum_{i=1}^n f_i(x_i) + \left\langle \v{u} , \v{M}\v{x}\right\rangle$, where $(\v{M}^{T})^{\dagger}$ denotes the lifted Moore-Penrose pseudoinverse of $M^{T}$.

\subsection{Constructing M from W}\label{Sec:MfromW}

The iteration \eqref{d_iteration} relies on the construction of a matrix $M \in \mathbb{R}^{d \times n}$ from $W \in \mathbb{R}^{n \times n}$ such that $M^{T} M = W$. Theorem \ref{main_theorem} shows that any such $M$ produces an iteration \eqref{d_iteration} that converges to the solution of \eqref{zero_in_monotone}. How should one construct the matrix $M$? Different choices produce different values of $d$ and different sparsity patterns in $M$, which may be beneficial in different scenarios. In this section, we propose three different methods for constructing $M$, each of which provides a different tradeoff between sparsity and leading dimension of $M$.

Our first method prioritizes sparsity. Assume that $W$ is a \emph{Stieltjes matrix}, a symmetric positive semidefinite matrix with nonpositive off-diagonal entries. This can be guaranteed by adding additional nonpositivity constraints for the off-diagonal entries of $W$ to $\mathcal{C}$, which preserves any existing convexity structure in the problem. Empirically, we note that solutions to \eqref{main_prob} often return a Stieltjes $W$ without introducing any additional constraints enforcing the condition. Let $\mathcal{N} = \left((i,j): j > i \text{ and } W_{ij} \neq 0\right)$ be a tuple containing the nonzero entries in the strictly upper triangular part of $W$, ordered arbitrarily. Let $d = |\mathcal{N}|$. Define $B \in \mathbb{R}^{d \times n}$ as
\begin{equation}
B_{ik} = 
\begin{cases} 
-1  & \text{ if } \mathcal{N}_{k} = (i,j) \text{ for some } i \in [n]\\ 
1  & \text{ if } \mathcal{N}_{k} = (j,i) \text{ for some } j \in [n]\\
0 & \text{ otherwise}.
\end{cases}
\end{equation}
Define $V \in \mathbb{R}^{d \times d}$ to be a diagonal matrix with $V_{kk} = -W_{ij}$ where $\mathcal{N}_{k} = (i,j)$. Note that the Stieltjes assumption on $W$ gives that $V$ has nonnegative entries. We claim that $B V B^{T} = W$, which implies that setting $M = V^{1/2} B^{T}$ gives $M^{T} M = W$.

To show $M^{T} M = W$, consider off-diagonal and diagonal terms of $W$ separately. For off-diagonal entries, the inner product of columns $i$ and $j$ in $M$ is equal to $W_{ij}$ because the index $k$ is the only nonzero entry in both vectors. For diagonal terms, the $i$th entry along the diagonal of $M^{T} M$ is the sum of squares of the entries of column $i$ in $M$, $-\sum_{j \neq i} W_{ij}$. Constraint \eqref{con1} gives that $-\sum_{j \neq i} W_{ij} = W_{ii}$, so diagonal entries of $M^{T} M$ also equal diagonal entries of $W$. This method aggressively prioritizes sparsity at the cost of increasing $d$ to the number of nonzero entries in the strictly upper triangular part of $W$. Additionally, it imposes an additional restriction (the Stieltjes condition) on the set of feasible $W$. 

Another option, which prioritizes both sparsity and small value of $d$, is to construct the matrix $M$ via Cholesky decomposition, where $W = M^{T} M$ and $M$ is upper triangular. Additional sparsity can be imposed by performing a sparse Cholesky decomposition which results in matrices $P$, $L$ and $D$ such that
$$W = P L D L^{T} P^{T}.$$
The matrix $P \in \mathbb{R}^{n \times n}$ is a permutation matrix chosen to minimize fill in, $L$ is lower triangular with unit diagonal, and $D$ is a diagonal matrix. Since $W$ is positive semidefinite, the diagonal matrix $D$ has nonnegative entries. Additionally, since $\mathrm{rank}(W) = n-1$, exactly one of $D$'s diagonal entries is zero. If the zero occurs in the $i$th entry of the diagonal, denote by $\tilde{D}$ the $n-1 \times n -1$ matrix with the $i$th row and column removed. Likewise, denote by $B$ the matrix $P L $ and by $\tilde{B}$ the matrix $B$ with its $i$th column removed. Then
$$W = \tilde{B} \tilde{D} \tilde{B}^{T},$$
so taking $M = \tilde{D}^{1/2} \tilde{B}^{T}$ results in a sparse $M \in \mathbb{R}^{n-1 \times n}$. Note that, in this sparse Cholesky decomposition, the matrix $M$ is upper triangularizable via a permutation of its columns, but may not actually be upper triangular. In this construction the dimension $d$ of the iteration \eqref{d_iteration} is $n-1$. Relative to the decomposition in the previous paragraph, the sparse Cholesky decomposition has a smaller $d$ value and does not require the Stieltjes condition, but may be less sparse.

A third method for constructing $M$ uses the eigendecomposition of $W$. Let $U \Lambda U^{T}$ be an eigendecomposition of $W$, where $\Lambda$ is a diagonal matrix containing the (nonnegative) eigenvalues of $W$. Since $\mathrm{Null}(W) = \mathrm{span}\{\1\}$, exactly one eigenvalue is equal to $0$. Let $\tilde{\Lambda} \in \mathbb{R}^{(n-1) \times (n-1)}$ be the matrix $\Lambda$ with the zero eigenvalue removed, and $\tilde{U} \in \mathbb{R}^{n \times (n-1)}$ be the submatrix of $U$ which removes the column corresponding to the zero eigenvalue. Then $W = \tilde{U}\, \tilde{\Lambda} \,\tilde{U}^{T}$, so taking $M = \tilde{\Lambda}^{1/2}\, \tilde{U}^{T}$ yields an $M$ with $d = n-1$. Though this method for constructing $M$ has $d = n-1$ as in the sparse Cholesky method, it does not prioritize sparsity.

Regarding the dimension $d$ of the iteration \eqref{d_iteration}, \cite[Theorem 1]{malitsky2023resolvent} shows that $d \geq n-1$ is necessary for convergence of the algorithm \eqref{d_iteration}. For any $W$, we have provided two different methods (the Cholesky and eigendecomposition methods) which attain that lower bound. This allows us to prove that every sequence of possible $x$ values generated by an iteration of the form \eqref{d_iteration} can be generated by an iteration of minimal dimension. We formalize this result in the following theorem, deferring its proof to the appendix.

\begin{theoremrep} \label{min_lifting}
    Every frugal resolvent splitting given by iteration \eqref{d_iteration} has an equivalent frugal resolvent splitting with minimal lifting. That is, for any $M \in \mathbb{R}^{d \times n}$ and $L \in \mathbb{R}^{n \times n}$ for which $W = M^{T} M$ and $Z = 2I - L - L^{T}$ are feasible in \eqref{main_prob} for some constants $c$ and $\epsilon$ and set $\mathcal{C}$, there exists $\tilde{M} \in \mathbb{R}^{n-1 \times n}$ such that for any initial point $\v{z}^{0}$ there is an initial point $\tilde{\v{z}}^{0}$ for which the iterations
\begin{minipage}{.4\textwidth}
\begin{align*}
\mk{x} &= J_{\v{A}}\left(-\mk{M}^{T} \mk{z}^{k} + \mk{L} \mk{x}\right)\\
\mk{z}^{k+1} &= \mk{z}^{k} + \gamma \mk{M} \mk{x}.
\end{align*}
\end{minipage}%
\begin{minipage}{.2\textwidth}
\begin{center}
and 
\end{center}
\end{minipage}%
\begin{minipage}{.4\textwidth}
\begin{align*}
\mk{x} &= J_{\v{A}}\left(-\tilde{\mk{M}}^{T} \tilde{\mk{z}}^{k} + \mk{L} \mk{x}\right)\\
\tilde{\mk{z}}^{k+1} &= \tilde{\mk{z}}^{k} + \gamma \tilde{\mk{M}} \mk{x}.
\end{align*}
\end{minipage}
produce the same sequence of $\v{x}$ values.
\end{theoremrep} 
\begin{proof}
Let $M$ and $L$ be as in the theorem statement, and let $\v{z}^{0} \in \HH^d$ be an initial starting point. Construct $\tilde{M}$ via one of the methods (either Cholesky or eigendecomposition) proposed in Section \ref{Sec:MfromW} such that $\tilde{M}^{T} \tilde{M} = W$ and $\tilde{M} \in \mathbb{R}^{(n-1) \times n}$. Note that any point $\tilde{\v{z}}^{0} \in \HH^{n-1}$ such that 
\begin{equation}
\v{M}^{T} \v{z}^{0} = \tilde{\v{M}}^{T} \tilde{\v{z}}^{0} \label{need_to_show}
\end{equation}
will generate the same sequence of $\v{x}$ values in the two iterations.

To show that the system \eqref{need_to_show} has a solution $\tilde{\v{z}}^{0}$, we will show that $\v{M}$ and $\tilde{\v{M}}$ have the same row space. Recall that the row space of a matrix is the orthogonal complement of its null space. But the two matrices $\v{M}$ and $\tilde{\v{M}}$ have the same null space, $\mathrm{span}(\mathbbm{1})$. This is because, for any $\v{v}$, 
$$\|\v{M} \v{v}\|^{2} = \langle \v{M}\v{v}, \v{M}\v{v}\rangle = \langle \v{v}, \v{W}\v{v}\rangle$$
which by \eqref{con1}-\eqref{con2} is equal to $0$ if and only if $\v{v} \in \mathrm{span}(\mathbbm{1})$. The same result holds for $\tilde{\v{M}}$ since $\tilde{\v{M}}^{T} \tilde{\v{M}} = \v{W}$ as well. Therefore $\v{M}$ and $\tilde{\v{M}}$ have the same null space, which implies that their row spaces, the orthogonal complements to their null spaces, are equal. We conclude that the system \eqref{need_to_show} has a solution $\tilde{\v{z}}^{0}$ for every $\v{z}^{0}$, which proves the result.
\end{proof}

\subsection{A Graph Theoretic Interpretation}\label{Sec:GraphTheory}

The language of graph theory provides a helpful description of the sparsity of the matrices in problem \eqref{main_prob} and information exchange between resolvent operators. In this section, we recall some concepts from graph theory that will be useful in the remainder. For further details we refer the reader to \cite{chung1997spectral}.

A graph $G = (\mathcal{N}, \mathcal{E})$ is a set of nodes $\mathcal{N}$ and a set of edges $\mathcal{E}$ connecting those nodes. The degree of a node is the number of edges containing that node, and the degree matrix of a graph is an $|\mathcal{N}| \times |\mathcal{N}|$ diagonal matrix with degree of each node on the diagonal. The adjacency matrix $A$ of a graph $G$ is an $|\mathcal{N}| \times | \mathcal{N}|$ matrix with $A_{ij} = 1$ if there is an edge between node $i$ and node $j$ and $0$ otherwise. The Laplacian matrix $K$ of a graph $G$ is $K = D - A$, where $D$ is the degree matrix and $A$ the adjacency matrix of $G$. An orientation of a graph assigns a direction to each edge; given an oriented graph its edge-incidence matrix is an $|\mathcal{N}| \times |\mathcal{E}|$ matrix $B$ such that 
\begin{align*}
B_{ve} = \begin{cases}
1 & \text{ if node } v \text{ is the head of edge } e \\
-1 & \text{ if node } v \text{ is the tail of edge } e \\
0 & \text{ otherwise}.
\end{cases}
\end{align*}
The oriented edge-incidence matrix of a connected graph has $\mathrm{Null}(B^{T}) = \text{span}\{\mathbbm{1}\}$. The oriented edge-incidence matrix is connected to the Laplacian matrix $K$ because $K = B \, B^{T}$. Every Laplacian matrix also necessarily satisfies $K \mathbbm{1} = 0$, and for $K \in \Sp^n$, $\lambda_{2}(K) > 0$ implies the graph is connected. 

The Laplacian matrix of a connected graph satisfies the conditions on $W$ in \eqref{con1} and \eqref{con2} for some $c > 0$, and the connection between the oriented edge-incidence matrix and the Laplacian allows one to take $M = B^{T}$ in \eqref{d_iteration} if one can pair it with a feasible $Z$. In general, finding a feasible $Z$ for a given $W$ is a difficult problem and motivates use of the SDP in \eqref{main_prob}, though \cite{tam2023frugal} notes in a more limited setting that taking $W = Z$, where $W$ is a scalar multiple of the Laplacian of a connected regular graph, yields a convergent resolvent splitting algorithm.

Extending the graph theoretic interpretation to the setting of directed graphs with weighted edges allows a graph characterization of iteration \eqref{n_iteration} for any $W$ and $Z$.
If $V$ is an $|\mathcal{E}| \times |\mathcal{E}|$ diagonal matrix with edge weights on the diagonal, the weighted graph Laplacian is defined as $K = B V B^{T}$ where $B$ is the (unweighted) oriented edge-incidence matrix. In the remainder, we will denote by $G(K)$ the weighted graph induced by $K$ by interpreting $K$ as a weighted graph Laplacian, i.e. the graph with the edge weight between nodes $i$ and $j$, where $i \neq j$, given by $-K_{ij}$. Interpreting the $W$ returned by \eqref{main_prob} as a weighted graph Laplacian motivates the first $M$ construction in Section \ref{Sec:MfromW}.
Additionally, any feasible $Z$ in \eqref{main_prob} can be shown to be a weighted graph Laplacian, a result which we prove in the appendix's Proposition \ref{lap_full}.

\begin{toappendix}
\begin{propositionrep} \label{lap_full}
    Any feasible $Z$ in \eqref{main_prob} is a weighted graph Laplacian.
\end{propositionrep}

\begin{proof}   
We will show that $Z$ has $Z \1 = 0$, which permits the decomposition $Z = B V B^{T}$, where $B$ is an oriented edge-incidence matrix and $V$ a diagonal matrix of corresponding edge weights, following the steps in section \ref{Sec:MfromW}. 

First, we note some important properties of $Z$ that we will utilize. Constraint \eqref{con3} implies that $Z$ is positive semidefinite. Furthermore, note that we have $Z_{11} > 0$ since $\epsilon < 2$.

Suppose for contradiction $Z \in \Sp^n$ is a matrix satisfying \eqref{con4} and \eqref{con5} for which $Z \1 \ne 0$. Since the total matrix sums to 0, we have at least one column summing to a positive value and another summing to a negative value. Without loss of generality, assume that the first column sums to a negative value, e.g. $\sum_{j=1}^n Z_{1j} = -p$ where $p > 0$. Looking at the off-diagonal sums in the lower triangle, we have:
\begin{equation}
    \sum_{i=1}^n\sum_{j>i} Z_{ij} = -\frac{n\, Z_{11}}{2}
\end{equation}
\begin{equation}
    \sum_{j > 1} Z_{1j} = -Z_{11}-p \label{lap_p1}
\end{equation}
\begin{equation}
    \sum_{i=2}^n\sum_{j>i} Z_{ij} = -\frac{n\,Z_{11}}{2} + Z_{11} + p \label{lap_tri}
\end{equation}
Choose some positive $\delta$ such that $0 < \delta < \frac{2p}{Z_{11}}$. Define $x \in \R$ such that $x_1 = 1 + \delta$ and $x_j=1$ otherwise. We will show that $x^T Z x < 0$.
Using symmetry and the expansion $x^T Z x = \sum_{i=1}^n \sum_{j=1}^n Z_{ij} x_i x_j$, we have
\begin{align}   
x^T Z x &=\sum_{i=1}^n Z_{ii}x_i^2 + 2\sum_{i=1}^n\sum_{j>i}Z_{ij}x_i x_j \\
&=Z_{11}(1+\delta)^2 + Z_{11}(n-1) + 2\left[\sum_{j>1}(1+\delta)Z_{1j} + \sum_{i=2}^n\sum_{j>i}Z_{ij}\right] \\
&=n\, Z_{11} + 2 Z_{11} \delta + Z_{11} \delta^2 + 2\left[(1+\delta)(-Z_{11}-p) + \left(-\frac{n Z_{11}}{2}+Z_{11}+p\right)\right] \\
&=n\, Z_{11} + 2 Z_{11} \delta + Z_{11} \delta^2 + 2\left[- Z_{11} \delta - \delta p - \frac{n Z_{11}}{2} \right] \\
&=Z_{11} \delta^2 - 2p \delta \label{lap_result}
\end{align}
For any $0 < \delta <  \frac{2p}{Z_{11}}$, expression \eqref{lap_result} is negative. Thus we have a contradiction to $Z \succeq 0$, having shown the existence of a vector $x$ such that $x^T Z x < 0$. We conclude that each row of $Z$ sums to $0$, which proves the result.
\end{proof}
\end{toappendix}

A graph theoretic interpretation of $W$ provides insight into constraint \eqref{con2}. Since $W \mathbbm{1} = 0$ and $W \succeq 0$, we have $\lambda_{1}(W) = 0$. $\lambda_{2}(W)$ is called the \emph{Fiedler value} of the graph $G(W)$. For unweighted graphs, the Fiedler value provides a lower bound on the minimum number of edges which must be removed to form a disconnected graph, and in general graphs with larger Fiedler values have greater connectivity. For an unweighted connected graph on $n$ nodes, the minimal Fiedler value is given by $2\left(1-\cos{\frac{\pi}{n}}\right)$ \citep{fiedler1973algebraic}. We use this value as the default choice of the parameter $c$ in \eqref{con2}, since all connected unweighted graphs, i.e. connected graphs with unit edge weights, are included in the set $\left\{G(W): \lambda_{1}(W) + \lambda_{2}(W) \geq 2\left(1-\cos{\frac{\pi}{n}}\right)\right\}$. 

Viewing $W$ as the Laplacian of a weighted graph also provides an intuitive interpretation of Malitsky and Tam's Minimal Lifting Theorem, \cite[Theorem 1]{malitsky2023resolvent}, when the matrix $W$ is Stieltjes. This theorem states that $d$, the dimension of $\v{z}$ in \eqref{d_iteration}, must be $\geq n-1$. If $d < n-1$, then $W = M^{T} M$ has rank $< n-1$ since $M \in \mathbb{R}^{d \times n}$, so $\lambda_{2}(W) = 0$ and $G(W)$ is disconnected \cite{fiedler1975property}. Clearly, any algorithm with a disconnected graph purporting to reach consensus among all of its nodes may fail to converge.

We next provide a collection of necessary conditions for $W$ and $Z$ to satisfy the constraints \eqref{main_prob}. These conditions are naturally stated in graph theoretic language, and they provide general guidelines for constructing a nonempty constraint set $\mathcal{C}$.

\begin{lemmarep}\label{Lem:Limitations}
  For any matrices $W, Z$ which satisfy constraints \eqref{main_prob} we have the following:
  \begin{enumerate}[\normalfont(a)]
    \item All entries in $Z$ and $W$ have magnitude bounded above by $Z_{11}$. Phrased graph-theoretically, edge weights in $G(Z)$ and $G(W)$ have magnitude bounded above by the (constant) degree of the nodes in $G(Z)$. \label{Lem:lap_bound}
    \item $G(W)$ is connected.\label{Lem:wconnect}
    \item $G(W)$ must have at least one edge connected to each node.\label{Lem:w1}
    \item $G(W)$ must have at least $n-1$ edges. \label{Lem:wn1}
    \item $G(Z)$ is connected.\label{Lem:zconnect}
    \item If $n > 2$, $G(Z)$ must have at least two edges connected to each node.\label{Lem:z2}
    \item If $n > 2$, $G(Z)$ must have at least $n$ edges. \label{Lem:zn}
    \item For any subset $\mathcal{S}$ of nodes in $G(Z)$, let $\mathcal{E}_S$ be the set of edges between nodes in $\mathcal{S}$. It then holds that $\abs{|\mathcal{S}|-|\mathcal{S}^C|} \leq 2\left(\abs{\mathcal{E}_S} + \abs{\mathcal{E}_{S^C}}\right)$. \label{Lem:cutset}
  \end{enumerate}
\end{lemmarep}
\begin{proof}
  \begin{enumerate}[(a)]
   \item By constraints \eqref{con3} and \eqref{con8}, $Z \succeq W \succeq 0$, $Z \in \Sp^n$, and we know its principal minors have non-negative determinant. We also know by constraint \eqref{con5} that $Z_{ii} = Z_{11}$ for all $i$. Therefore, for any $i$, $j$, the determinant of its principle minor is
\begin{equation}\label{principal_minor}
\begin{vmatrix}
      Z_{11} & Z_{ij} \\
      Z_{ij} & Z_{11} \\
 \end{vmatrix} \geq 0
 \end{equation}
 This implies $Z_{ij}^2 \leq Z_{11}^2$, and $\abs{Z_{ij}} \leq Z_{11}$. Since $Z-W \succeq 0$, $e_{i}^{T} (Z - W) e_{i} \geq 0$ for standard basis vector $e_{i}$ gives $W_{ii} \leq Z_{ii}$. Using a similar principal minor argument as \eqref{principal_minor}, we see that $W_{ij}^2 \leq W_{ii}W_{jj} \leq Z_{11}^2$ and thus $\abs{W_{ij}} \leq Z_{11}$. Therefore all matrix entries $Z_{ij}$ and $W_{ij}$ have magnitude less than or equal to $Z_{11}$.

    \item If $G(W)$ has $\geq 2$ connected components, then $\text{Null}(W)$ is at least two-dimensional, since both $\1$ and the vector with 1 on the vertices of the first connected component and 0 otherwise are linearly independent and in $\text{Null}(W)$. Since $W \in \Sp^n$ and $\lambda_2(W) > c > 0$, the dimension of $\text{Null}(W)$ is 1. Therefore $G(W)$ cannot have $\geq 2$ connected components, and we conclude that the number of connected components in $G(W)$ is 1, so the graph is connected.
    \item Follows directly from part \ref{Lem:wconnect}
    \item Follows directly from part \ref{Lem:wconnect}.
    \item $Z \succeq W$, so, by Weyl's Monotonicity Theorem, $\lambda_2(Z) \geq \lambda_2(W) > c > 0$. Therefore the number of connected components in $G(Z)$ is 1, and the graph is connected.
    \item Suppose some node $i$ in $G(Z)$ has zero edges. Then $G(Z)$ is disconnected, violating part \ref{Lem:zconnect}. Suppose instead that node $i$ has a single edge, $(i,j')$. Since $Z\1=0$, $Z_{ii} = -\sum_{j \ne i}Z_{ij} = -Z_{ij'}$, so that edge has value $Z_{ii} > 0$. If node $j'$ has no other edges, the two nodes are disconnected from the rest of the graph. Since $n > 2$, we then have a disconnected graph, violating part \ref{Lem:zconnect}. 
    Suppose instead that $j'$ does have other edges. Since $Z_{j'j'} = -\sum_{j \ne j'} Z_{jj'}= Z_{ii} - \sum_{j \not\in \{ i, j'\}} Z_{jj'}$, $\sum_{j \not \in \{ i, j'\}} Z_{jj'}=0$. Since $j'$ has other edges, and their weight sums to zero, at least one edge $(j',j'')$ must have $Z_{j'j''} > 0$.
    Consider the determinant of the principle minor in $i$, $j'$, and $j''$:
    \begin{align}
      \begin{vmatrix}
      Z_{11}  & -Z_{11} & 0\\
      -Z_{11} & Z_{11}  & Z_{j'j''}\\
      0       & Z_{j'j''} & Z_{11} \\
 \end{vmatrix} &= Z_{11}^3 - Z_{11} Z_{j'j''}^2 - Z_{11}^3 \\
 &= - Z_{11} Z_{j'j''}^2 < 0.
\end{align}
This is a contradiction, however, since $Z \succeq 0$ and its principle minors must all be nonnegative. Therefore each node in $G(Z)$ has at least two edges.
    \item Follows directly from parts \ref{Lem:zconnect} and \ref{Lem:z2}.
    \item We note first that by \eqref{main_prob}, $Z$ is symmetric and $\diag{(Z)}=Z_{11} > 0$. 
    We also note the result in Proposition \ref{lap_full}, which establishes $Z\1=0$. 
    Therefore, in any row of $Z$, $-\sum_{j \ne i}Z_{ij} = Z_{11}$. We write $\delta_S$ as cutset of $\mathcal{S}$, that is, the set of edges with one node in $\mathcal{S}$ and the other in $\mathcal{S^C}$, and write $\mathcal{T} = \mathcal{S^C}$.
    Over the rows for nodes in $\mathcal{S}$ and $\mathcal{T}$, we have:
    \begin{align*}
      \abs{\mathcal{S}}Z_{11} &= -2\sum_{i,j \in \mathcal{E}_S}Z_{ij} - \sum_{i,j \in \delta_S}Z_{ij}\\
    \abs{\mathcal{T}}Z_{11} &= -2\sum_{i,j \in \mathcal{E}_T}Z_{ij} - \sum_{i,j \in \delta_T}Z_{ij}
\end{align*}
    Since $\mathcal{T} = \mathcal{S^C}$, 
    \begin{align*}
      &\sum_{i,j \in \delta_S}Z_{ij} = \sum_{i,j \in \delta_T}Z_{ij}\\
      \implies &\abs{\mathcal{T}}Z_{11} = -2\sum_{i,j \in \mathcal{E}_T}Z_{ij} + \abs{\mathcal{S}}Z_{11} + 2\sum_{i,j \in \mathcal{E}_S}Z_{ij}\\
      \implies & Z_{11}\left(\abs{\mathcal{S}}-\abs{\mathcal{T}}\right) = 2\left(\sum_{i,j \in \mathcal{E}_T}Z_{ij} - \sum_{i,j \in \mathcal{E}_S}Z_{ij}\right)
    \end{align*}
    
    We note the result in part \ref{Lem:lap_bound}, which says $\abs{Z_{ij}} \leq Z_{11}$. Therefore
    \begin{align*}
      & 2\abs*{\sum_{i,j \in \mathcal{E}_T}Z_{ij} - \sum_{i,j \in \mathcal{E}_S}Z_{ij}} \leq 2\left(\abs{\mathcal{E}_S} + \abs{\mathcal{E}_T}\right) Z_{11}\\
\implies &\abs*{Z_{11} \left(\abs{\mathcal{S}}-\abs{\mathcal{T}}\right)} \leq 2\left(\abs{\mathcal{E}_S} + \abs{\mathcal{E}_T}\right)Z_{11}\\
\implies &\abs*{\abs*{\mathcal{S}}-\abs*{\mathcal{T}}} \leq 2\left(\abs{\mathcal{E}_S} + \abs{\mathcal{E}_T}\right)
    \end{align*}
\end{enumerate}
\end{proof}

\section{Sparsity and Parallelism}\label{Sec:Examples}

One crucial aspect of problem \eqref{main_prob} is the trade-off implied by increased sparsity in $W$ and $Z$. Higher levels of sparsity in the matrices imply less exchange of resolvent calculations. This has two important advantages. First, when structured correctly, it permits parallelization among resolvent calculations, reducing the time necessary to complete one iteration of either \eqref{d_iteration} or \eqref{n_iteration}. Second, in applications where the resolvent calculations are decentralized, exchange of data can be a computational bottleneck, so minimizing the number of exchanges required reduces the time necessary to complete each iteration. In problem \eqref{main_prob}, constraints on the sparsity patterns of $W$ and $Z$ can be incorporated via the constraint set $\mathcal{C}$ while preserving the convexity of the problem.
Increased density, on the other hand, increases the rate of information exchange between resolvents.

We illustrate the benefits of controlling for sparsity with the following example, where we design a customized algorithm for solving a convex program $\min_{x} \sum_{i=1}^{6} f_{i}(x)$. Assume we have access to a compute cluster organized as in Figure \ref{fig:cluster}, where each resolvent calculation $J_{\partial f_{i}}$ is computed by the $i$th machine. Two groups of three machines have low latency connections among themselves, with one of the machines in each group possessing a high latency connection to a machine in the other group. Communications between computers which are not directly connected to one another are not permitted. In this setting, a successful algorithm will allow unrestricted access to the high latency connections depicted with green arrows, while minimizing use of the low latency connection depicted with red.  Such an algorithm can be realized in the context of problem \eqref{main_prob} by using $\mathcal{C}$ to constrain the sparsity structure of $W$ and $L$ such that $W_{ij} = 0$ and $Z_{ij}$ equals zero whenever there is no connection between machines $i$ and $j$. We also take the objective $\phi$ to be the number of nonzero entries in $W_{14}$, $W_{41}$, $Z_{14}$, $Z_{41}$. Though $\phi$ is nonconvex in this setting, solutions can still be easily computed using the techniques described in Section \ref{Sec:mip_formulations}, yielding the following as one solution.

\begin{figure}
\begin{center}
\includesvg[width=.5\textwidth]{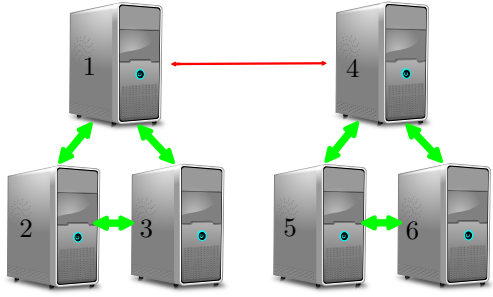}
\end{center}
\caption{A hypothetical cluster for which we design a custom splitting algorithm. Red arrows give high latency connections, which should be minimized, and green arrows give low latency connections.}
\label{fig:cluster}
\end{figure}

{\footnotesize
\begin{equation}\label{sparse_sol}
W =  
\begin{pmatrix}
    1.86 &  -0.52 &  -0.52 &  -0.83 &    0.00 &    0.00\\
  -0.52 &    1.33 &  -0.81 &    0.00 &    0.00 &    0.00\\
  -0.52 &  -0.81 &    1.33 &    0.00 &    0.00 &    0.00\\
  -0.83 &    0.00 &    0.00 &    1.86 &  -0.52 &  -0.52\\
    0.00 &    0.00 &    0.00 &  -0.52 &    1.33 &  -0.81\\
    0.00 &    0.00 &    0.00 &  -0.52 &  -0.81 &    1.33
\end{pmatrix}
\quad 
Z = 
\begin{pmatrix}
    2.00 &  -0.56 &  -0.56 &  -0.88 &    0.00 &    0.00\\
  -0.56 &    2.00 &  -1.44 &    0.00 &    0.00 &    0.00\\
  -0.56 &  -1.44 &    2.00 &    0.00 &    0.00 &    0.00\\
  -0.88 &    0.00 &    0.00 &    2.00 &  -0.56 &  -0.56\\
    0.00 &    0.00 &    0.00 &  -0.56 &    2.00 &  -1.44\\
    0.00 &    0.00 &    0.00 &  -0.56 &  -1.44 &    2.00
\end{pmatrix}.
\end{equation}
}
The matrices in \eqref{sparse_sol} yield convergent algorithms in iteration \eqref{n_iteration} and (when $W$ is factored as $M^{T} M$) iteration \eqref{d_iteration}. By constraining the sparsity patterns on these matrices, we construct a decentralized optimization algorithm that respects the communication structure among machines dedicated to computing the proximal values of each function $f_{i}$. Note that these matrices are not able to avoid communication along the low latency connection, as evidenced by the nonzero values in $W_{14}$ and $Z_{14}$. Removing these communications is not possible because doing so partitions the machines into two separate groups that do not communicate with each other via application of $\v{W}$ or $\v{Z}$, thereby violating Lemma \ref{Lem:Limitations}.

\begin{figure}
  \begin{center}
    \includesvg[width=.5\textwidth]{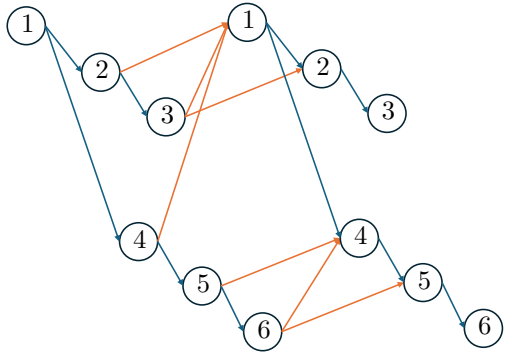}
    \end{center}
    \caption{An activity network depicting the dependencies among resolvent calculations in example \eqref{sparse_sol}. Blue lines are within-iteration dependencies from \eqref{n_itr1}, and orange lines are between-iteration dependencies from \eqref{n_itr2}.}
    \label{fig:activity}
  \end{figure}

In general, any off-diagonal nonzero value in $Z$ and $W$ implies a required communication between the resolvents corresponding to its row and column. For any $Z_{ij} \neq 0$ where $i < j$ (and $Z_{ji} \neq 0$ by symmetry), resolvent operator $i$ must provide its output to resolvent $j$ within the same iteration. If $W_{ij} \neq 0$, resolvent calculations must be provided by $j$ to $i$ and $i$ to $j$ to supply the information for the subsequent iteration.

By choosing entries of $W$ and $Z$ to be 0, we can design algorithms that allow resolvent calculations to run in parallel, both within a given iteration and across iterations. 
Within an iteration, if resolvent $j$ does not rely on information from resolvent $i$ ($Z_{ji} = 0$),
these resolvents can be computed independently of one another. So, if each has received the required inputs from the other resolvents, resolvents $i$ and $j$ can be executed in parallel. Across iterations, since a given calculation may only depend on a subset of other resolvents to calculate $v_i$ or $\left(\v{M}^T \v{z}\right)_i$, resolvent $i$ can begin the next iteration while resolvent $j$ is still finishing the current iteration if $W_{ij}=0$.

Figure \ref{fig:activity} depicts the activity network of resolvent calculations for the example in \eqref{sparse_sol}, where the dependencies between resolvents are those implied by $W$ and $Z$. Blue edges denote dependencies in \eqref{n_itr1} and orange edges the dependencies in \eqref{n_itr2}. When resolvent computation and communication times can be estimated, these can be modeled in the activity network by using appropriate edge weights. Figure \ref{fig:parallel} depicts a Gantt chart which demonstrates the parallel resolvent computation and the execution timeline for the algorithm in \eqref{sparse_sol}. For this Gantt chart, we assumed that each resolvent completes in 16 milliseconds, low latency communications (green edges in Figure \ref{fig:cluster}) take .25 milliseconds, and high latency communications (red edges in Figure \ref{fig:cluster}) take 10 milliseconds. In this algorithm, resolvent 4 only depends on resolvent 1 for its input, so as long as it has also received necessary information from the previous iteration, it can execute its computation in parallel with resolvent 2 or 3. Looking across iterations, resolvent 1 only relies on inputs from resolvents 2, 3, and 4, since $W_{15}$ and $W_{16}$ are 0. Once those resolvents have passed their outputs, resolvent 1 can begin the next iteration, even if resolvents 5 and 6 have not completed their computation.

\begin{figure}
  \begin{center}
\includegraphics[width=.8\textwidth]{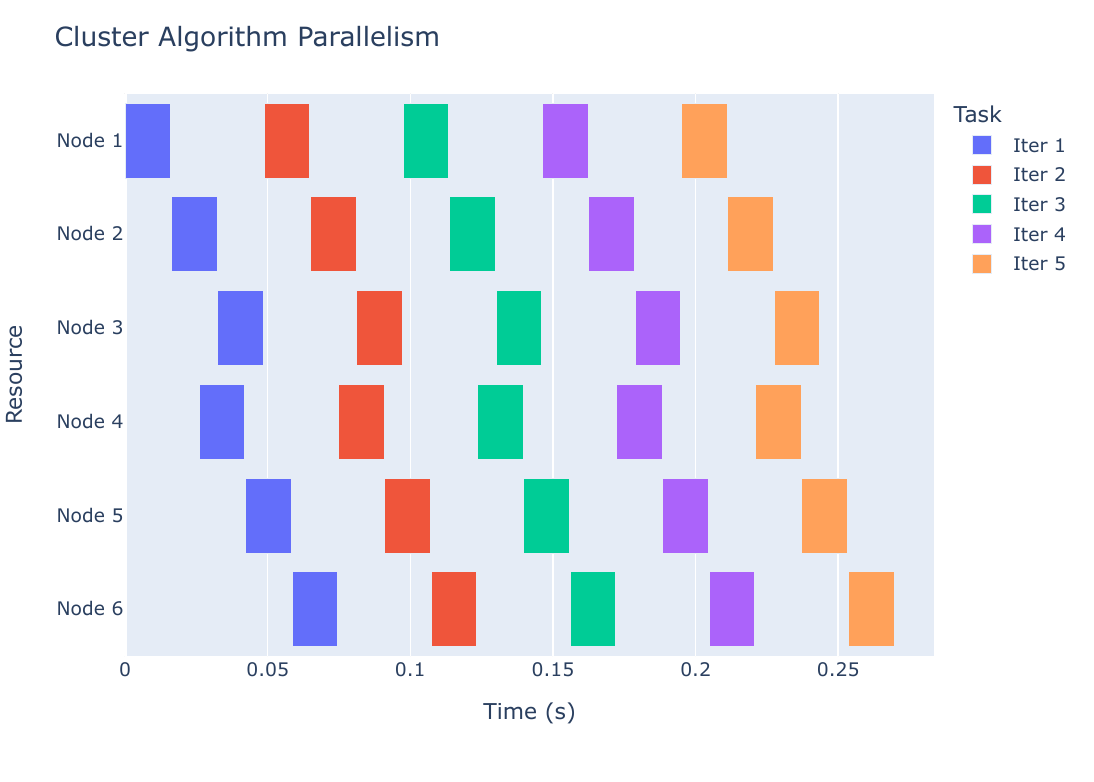}
\end{center}
\caption{Execution timeline showing parallelism within and across iterations for the cluster given in Figure \ref{fig:cluster} using the algorithm specified by \eqref{sparse_sol} and communication and computation times as given in the text.}
\label{fig:parallel}
\end{figure}

\subsection{Maximally Dense and Maximally Sparse Designs}

Two extreme examples of sparsity in $W$ and $Z$ are the maximally dense and maximally sparse settings. The maximally dense setting refers to the situation where $W$ and $Z$ have no zero entries. We refer to this as the \textit{fully connected} design, and demonstrate empirically in Section \ref{Sec:Convergence} that it frequently offers the best convergence rate for a given problem. The maximally sparse setting, on the other hand, minimizes the number of nonzero entries in $W$ and $Z$.

In the context of maximally sparse splittings, \cite{malitsky2023resolvent} shows that the minimal number of nonzero entries in $W$ is exactly $n-1$, though their optimal placement given a particular constraint set $\mathcal{C}$ (and the placement of a minimal number of edges in $Z$) is an important open question. To solve it for $n>2$, we propose the following mixed integer semidefinite program (MISDP), which determines the sparsity structure of $W$ and $Z$ through the use of binary variables which track the nonzero entries of these matrices.
\begin{subequations}\label{min_edge_sdp}
\begin{align} 
    \min_{x, y, Z, W} \quad & \sum_{i=1}^{n} \sum_{j<i}  x_{ij} + y_{ij} \nonumber \\
      \text{s.t.}  \quad
    & (2 + \epsilon) x_{ij} \geq \abs{Z_{ij}} \quad \forall i,j \in [n] \label{misdp_zx}\\
    & (2 + \epsilon) y_{ij} \geq \abs{W_{ij}} \quad \forall i,j \in [n] \label{misdp_wy}\\
    & \sum_{i=1}^{n} \sum_{j<i} x_{ij} \geq n  \label{misdp_opt_xn}\\
    & \sum_{i=1}^{n} \sum_{j<i} y_{ij}\geq n-1 \label{misdp_opt_yn}\\
    & \sum_{j\ne i} x_{ij} \geq 2 \quad \forall i \in [n]\label{misdp_opt_x2}\\
    & \sum_{j\ne i} y_{ij} \geq 1 \quad \forall i \in [n] \label{misdp_opt_y1}\\
    & x \in \mathbb{S}^{n}, \quad x_{ij} \in \{0,1\} \quad \forall i,j \in [n]\\
    & y \in \mathbb{S}^{n}, \quad y_{ij} \in \{0,1\} \quad \forall i,j \in [n]\\
    & \text{SDP constraints \eqref{con1}-\eqref{con8}}
    \end{align}
\end{subequations}

In \eqref{min_edge_sdp}, the objective counts the number of nonzero entries in strictly lower triangular parts of $W$ and $Z$. Constraints \eqref{misdp_zx} and \eqref{misdp_wy} require weights which are not indicated as nonzero to be zero, relying on Lemma \ref{Lem:Limitations}\ref{Lem:lap_bound} to bound the value of each entry. Constraints \eqref{misdp_opt_xn}-\eqref{misdp_opt_y1} are constraints which utilize Lemma \ref{Lem:Limitations} to tighten the formulation; they are optional but reduce the time required by MISDP solvers, e.g. SCIP-SDP \cite{gally2018framework}, to solve the problem. Constraint \eqref{misdp_opt_xn} implements Lemma \ref{Lem:Limitations}\ref{Lem:zn}, requiring the existence of at least $n$ edges in $G(Z)$. Constraint \eqref{misdp_opt_yn} implements \ref{Lem:Limitations}\ref{Lem:wn1}, requiring the existence of at least $n-1$ edges in $G(W)$. Constraint \eqref{misdp_opt_x2} ensures each node in $G(Z)$ has at least two edges (required by Lemma \ref{Lem:Limitations}\ref{Lem:z2}). Lemma \ref{Lem:Limitations}\ref{Lem:w1} gives that there must be at least one edge to each node in $G(W)$, which is captured in Constraint \eqref{misdp_opt_y1}.

\subsection{Parallelization via Block Designs}\label{Sec:block_min}

The parallelism demonstrated in Figure \ref{fig:parallel} raises the question, can one generate algorithms which achieve maximal parallelism, and therefore minimize the time required to execute some fixed number of iterations? 
In this subsection, we introduce terminology and characterizations of maximally parallel algorithms, while focusing on computation in Section \ref{Sec:Objectives}. We focus our attention on \eqref{n_iteration} since it permits control of sparsity structure directly via constraints on $W$.

We assume in the remainder that all resolvent computation times for a given resolvent are fixed, meaning that they do not vary across iteration or input value. These are given by $t_{i} > 0$ for each resolvent $i \in [n]$. We similarly assume fixed communication times $l_{ij} = l_{ji} > 0$ for all $i<j \in [n]$. All other calculations in \eqref{n_iteration} involve scalar arithmetic on co-located data and are assumed negligible. 
Let $s_{ki}$ be the computation start time of resolvent $i$ in iteration $k$, $s_k$ be the earliest computation start time across all resolvents in iteration $k$ (the iteration start time), and $e_k$ be the completion time of the final communication in iteration $k$ (the iteration end time).
We then have:
\begin{align}
s_{11} &= 0 \\
s_{kj} &= \max_{i | L_{ji} \ne 0}{s_{ki} + t_i + l_{ij}} \quad &\forall k \in [r], j > i \in [n]  \\
s_{k+1,i} &= \max_{j | W_{ij} \ne 0}{s_{kj} + t_j + l_{ji}} \quad &\forall k \in [r], i \ne j \in [n] \\
s_{k} &= \min_{i \in [n]}{s_{ki}}\quad &\forall k \in [r]\\
e_k &= \max_{i \in [n]}\{s_{ki} + t_i + \max_{j|{W_{ij} \ne 0}} l_{ij}\}\quad &\forall k \in [r]
\end{align}   
We can then define the $k$-iteration average iteration time as $c^k = e^k/k$. $c^1$ is the completion time of the first iteration. 
The following proposition establishes a lower bound on any single iteration time when communication times are constant, which is useful for establishing whether a given design attains this minimum.
\begin{propositionrep}\label{mincycle}
The minimum single iteration time for algorithm \eqref{n_iteration} with computation time $t_i$ for resolvent $i \in [n]$ and constant communication time $l$ between all resolvents has a lower bound of $\max_{i \in [n]}{t_i}+\min_{i \in [n]}{t_i}+2l$.
\end{propositionrep}

\begin{proof}
Assume each resolvent begins its resolvent computation in iteration $k$ with access to $\v{v}^{k-1}$ (otherwise the time will not be the minimum). We want to find a lower bound on the time required to complete the computations in \eqref{n_itr1} and \eqref{n_itr2}. We claim that computing all elements of $\v{v}^{k}$ requires at least two rounds of resolvent computation and at least two rounds of communication between resolvents, all of which must be performed in serial. Indeed, if only one round of parallelized resolvent calculation was required, then all resolvents can be computed in parallel. But this contradicts the structure of $L$, which must have at least one off-diagonal nonzero because $G(Z)$ is connected. Hence there must be at least two rounds of resolvent computation, and at least one round of communication between them due to the communication implied by $\v{L} \v{x}$ in \eqref{n_itr1}. Computing $\v{v}^{k}$ in \eqref{n_itr2} requires an additional round of communication because of the computation of $\v{W}\v{x}$. This additional round of communication cannot be performed in parallel with the final round of resolvent calculation because it requires at least one resolvent computed in the final round to communicate to a resolvent computed in a previous round. If this were not the case then the partition of nodes induced by the set of resolvents which can be computed in parallel in the final round of computation and its complement would partition $G(W)$ into two disconnected components, contradicting the connectedness of $G(W)$ guaranteed by Lemma \ref{Lem:Limitations}. In summary, at least two rounds of resolvent computation and at least two rounds of communication must be performed in serial.

The resolvent which takes the maximum time to compute must be included in one of the rounds of computation. The other round of computation must be nonempty, so it takes at least $\min_{i \in [n]} t_{i}$ time. Because communication times are uniform the two rounds of communication take at least $2 l $ time. Therefore one iteration of algorithm \eqref{n_iteration} takes at least
$$\max_{i \in [n]} t_{i} + \min_{i \in [n]} t_{i} + 2l.$$
\end{proof}

For large number of iterations, the limiting behavior of $c^{k}$ is a useful characterization of algorithm's performance. We define the \textit{algorithm iteration time}, denoted $c^\infty$, as $c^{\infty} = \lim_{k \to \infty} c^k$. One promising approach to attaining minimal $c^{1}$ and $c^{\infty}$ is to use the constraint set $\mathcal{C}$ in \eqref{main_prob} to generate matrices which are known to allow maximally parallel execution. We do so by dividing the resolvents into blocks within which all resolvents execute in parallel.

We define a \textit{$d$-Block} design over $n$ resolvents as follows. Select $d \in \{2, \dots, n\}$. We construct a partition of the resolvents of size $d$ such that each of the $d$ elements of the partition, which we call a \emph{block}, contains consecutive resolvents. That is, if the blocks are of size $m_1, m_2, \dots, m_d$, block 1 contains resolvents 1 to $m_1$, block 2 contains $m_1+1$ to $m_1+m_2$, and so on. For a $d$-Block design, we prohibit edges in $G(Z)$ connecting two resolvents in the same block and in $G(W)$ we only permit edges between resolvent $i$ in block $k$ and resolvent $j$ in block $l$ if $|k-l| \leq 1$.

Care must be taken in the choice of the size of each block to avoid infeasibility in \eqref{main_prob}. Lemma \ref{Lem:Limitations}\ref{Lem:cutset} is particularly helpful for this. Note that the definition of a $d$-Block design means that for any block $\mathcal{S} \subseteq \{1, \dots, n\}$, $\mathcal{E}_S = \emptyset$ in $G(Z)$. An immediate corollary of Lemma \ref{Lem:Limitations}\ref{Lem:cutset} is that a 2-Block design must have $|\mathcal{S}_1| = |\mathcal{S}_2|$. Furthermore, selecting $d$ which divides $n$ and letting the block size $|\mathcal{S}_i| = \frac{n}{d}$ will always satisfy Lemma \ref{Lem:Limitations}\ref{Lem:cutset}. Unless otherwise noted, $d$-Block designs in the remainder of this work will have constant block size of $m=\frac{n}{d}$. 
A 2-Block design is therefore only feasible for even $n$, and partitions the resolvents into two blocks of size $\frac{n}{2}$. The following theorem establishes the minimality of the $2$-Block design algorithm iteration time in the case of constant computation and communication times.

\begin{theoremrep}
  The 2-Block algorithm design across $n$ resolvents with constant computation and communication times has an algorithm iteration time which attains the minimum single iteration time. It is unique among designs which attain the minimum single iteration time in the first iteration. 
\end{theoremrep}
\begin{proof}
In the 2-Block design with uniform computation and communication times, the first block completes its computation and communication in time $t+l$. It is immediately followed by the second block, which also completes its computation and communication in time $t+l$. Its first iteration time is therefore $2t+ 2l$, and it attains the minimum single iteration time in its first iteration. At the conclusion of the first iteration, all required information is available for the second iteration, and the same is true in each subsequent iteration. It therefore has $e^k = k(2t+2l)$, $c^k = 2t+2l$, and $c^\infty = 2t+2l$, so its algorithm iteration time is minimum single iteration time. 

Furthermore, any algorithm which attains the minimum single iteration time in the first iteration with uniform computation and communication times  must have at least two blocks of resolvents by the argument in Proposition \ref{mincycle}. It must also have no more than two blocks, since the minimum single iteration time is $2t+2l$, and there is no opportunity for between-iteration parallelism in a single iteration. Let the blocks be given by index sets $S_1$ and $S_2$. For any block operating in parallel, $Z_{ij} = 0$ for all $i \ne j$ in the block, so $\mathcal{E}_{S_1} = \mathcal{E}_{S_2} = \emptyset$. By Lemma \ref{Lem:Limitations}, we must have $|\mathcal{S}_1| = |\mathcal{S}_2|$. Suppose there is an index $i \in S_1$ such that $i \notin [\frac{n}{2}]$ (i.e. $S_1$ and $S_2$ are not the partition in the 2-Block design). Since $i$ operates in parallel with the other resolvents in $S_1$, $L_{ji} = 0$ for all $j \ne i \in S_1$. Furthermore, since $i$ is in the first block, it must not require resolvent information from $S_2$ for its resolvent computation. That is, $L_{ij} = 0$ for all $j \in S_2$. $i$ would therefore be disconnected in $G(Z)$, violating Lemma \ref{Lem:Limitations}. Therefore $S_1 = \{1 \dots \frac{n}{2}\}$ and $S_2 = \{\frac{n}{2}+1 \dots n\}$, and we have the 2-Block design, which is therefore unique in attaining the minimum single iteration time in the first iteration.
\end{proof}

Although the general $d$-Block design does not attain the minimum single iteration time in its first iteration for $d > 2$, it does attain the bound in the limit. It is also worth noting that the minimum single iteration bound in Proposition \ref{mincycle} can be attained by $d$-Block designs in many cases beyond constant resolvent times. 
Consider, for example, the case where communication times are constant, but the resolvent computation times are not. If it is possible to choose the resolvent ordering and block sizes so that the maximum resolvent computation time across one of the two sets of parallel blocks equals $\min_{i \in [n]} t_i$, the minimum iteration time bound in Proposition \ref{mincycle} is attained by this design. One can show that this condition holds in the case of constant communication time among all resolvents and resolvent computation times which are constant except for a single maximal outlier. In this situation, the minimum iteration time bound is attained regardless of the ordering of the resolvents whenever the $d$-Block design is feasible in \eqref{main_prob}.

Designs which are $d$-Block impose block matrix structure onto $Z$ and $W$. Matrices $Z$ and $W$ which form a $d$-Block design with constant block size have the block matrix form in \eqref{block_design_mat}, where each entry is a matrix in $\Rm$ where $m = n/d$. Each matrix $D^i \in \mathbb{S}^m$ has diagonals which ensure $W\1=0$ and off-diagonal elements permitted to be non-zero. 0 and $\I$ are the zero and identity matrices, and $*$ can be varied as long as symmetry is maintained. We present the results corresponding to $\epsilon = 0$ in \eqref{main_prob}, so each entry on the diagonal of $Z$ equals 2.
\begin{equation}\label{block_design_mat}
W = 
\begin{pmatrix}
D^1 & * & 0 & \cdots &   0    & 0     \\
* & D^2 & * & \cdots &    0    &  0       \\
0 & * & D^{3} & \ddots & 0 & 0 \\
\vdots & \vdots & \ddots &  \ddots  & \vdots & \vdots\\
0   &   0     & 0 & \cdots & D^{n-1}   & *  \\
0   &    0   & 0 & \cdots & *         & D^n
\end{pmatrix}, \quad
Z = 
\begin{pmatrix}
2\I & *   & *   & \cdots  & *   & *  \\
*   & 2\I & *   & \cdots   &   *   & *  \\
*    &  *   &  2\I  &  \ddots  & *  &  *  \\
\vdots & \vdots & \ddots & \ddots & \vdots & \vdots \\
*   & *   & *   & \cdots  & 2\I & *  \\
*   & *   & *   & \cdots  & *   & 2\I
\end{pmatrix}
\end{equation}

In the $2$-Block setting with $\epsilon = 0$, we observe an interesting block generalization of Douglas-Rachford splitting. In the Douglas-Rachford splitting for finding a zero in the sum of $2$ monotone operators we have 
\begin{equation}
W = \begin{pmatrix}
  1 & -1 \\
  -1 & 1 \\
\end{pmatrix}, \quad Z = \begin{pmatrix}
  2&-2\\
  -2&2\\
\end{pmatrix}.
\end{equation}
The following $2$-Block design generalizes Douglas-Rachford for finding a zero in the sum of $n$ operators, where $n$ is divisible by $2$ and the blocks are of equal size,

\begin{equation} \label{2block_fiedler}
W = \begin{pmatrix}
    2\I & -\frac{2}{m} \1 \1^T \\
    -\frac{2}{m} \1 \1^T & 2\I \\
  \end{pmatrix}, \quad Z = \begin{pmatrix}
    2\I&-\frac{2}{m} \1 \1^T \\
    -\frac{2}{m} \1 \1^T &2\I\\
  \end{pmatrix}.
\end{equation}
We also show in the appendix's Proposition \ref{prop:maxfiedler} that the design in \eqref{2block_fiedler} maximizes the sum of second-smallest eigenvalues--the Fiedler values--of $Z$ and $W$ among all $2$-block designs. The Douglas-Rachford splitting was extended to $n$ operators in the Malitsky-Tam algorithm \citep{malitsky2023resolvent}, and the connection to $d$-Block designs also extends. When $n$ is divisible by $d$ and blocks are of constant size $m=n/d$, the design
$$
Z = 
\begin{pmatrix}
2\I                  & -\frac{1}{m} \1 \1^T & 0                    & \dots  & 0 & -\frac{1}{m} \1 \1^T   \\
-\frac{1}{m} \1 \1^T & 2\I                  & -\frac{1}{m} \1 \1^T &        &   & 0                      \\
0                    & -\frac{1}{m} \1 \1^T & 2\I                  &        &   &                        \\
\vdots               &                      &                      & \ddots &   & \vdots                 \\
0                    &                      &                      &        &   & -\frac{1}{m} \1 \1^T   \\
-\frac{1}{m} \1 \1^T & 0                    &                      & \dots  & -\frac{1}{m} \1 \1^T & 2\I \\
\end{pmatrix}  $$

$$
W =   
\begin{pmatrix}
\I                   & -\frac{1}{m} \1 \1^T & 0                    & \dots & 0 & 0                     \\
-\frac{1}{m} \1 \1^T & 2\I                  & -\frac{1}{m} \1 \1^T &       &   & 0                     \\
0                    & -\frac{1}{m} \1 \1^T & 2\I                  &       &   &                       \\
\vdots               &                      &                      & \ddots &  & \vdots                \\
0                    &                      &                      &       &   & -\frac{1}{m} \1 \1^T  \\
0                    & 0                    &                      & \dots & -\frac{1}{m} \1 \1^T & \I \\
\end{pmatrix} 
$$
is a $d$-Block extension of the Malitsky-Tam algorithm. We have observed experimentally that this designs also maximizes the sum of the Fiedler values of $Z$ and $W$ among designs which share this sparsity pattern.

\begin{toappendix}
\begin{proposition}\label{prop:maxfiedler}
Consider the block matrix 
\begin{equation}\label{sparsity_structure}
Z = \begin{pmatrix} 2I & X \\ X^{T} & 2I \end{pmatrix}
\end{equation}
where the matrix $X$ is of size $m_{1} \times m_{2}$. If $m_{1} = m_{2}$ and $c \leq 2$, taking the matrix $X = -\frac{2}{\sqrt{m_{1} m_{2}}} \mathbbm{1}_{m_1} \mathbbm{1}_{m_{2}}^{T}$ in \eqref{sparsity_structure} and $W=Z$ maximizes $\lambda_{2}(Z) + \lambda_{2}(W)$, the sum of the Fiedler values of $Z$ and $W$ over all $Z$ and $W$ which are feasible in \eqref{main_prob}. If $c > 2$ then \eqref{main_prob} is infeasible among $Z$ and $W$ of the form \eqref{sparsity_structure}.
\end{proposition}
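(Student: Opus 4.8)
\emph{Proof plan.} The plan is to reduce the whole question to the singular values of the off-diagonal block $X$. Write $\sigma_{1}(X)\geq\sigma_{2}(X)\geq\cdots$ for the singular values of $X$. Since the symmetric block matrix with zero diagonal blocks and off-diagonal blocks $X,X^{T}$ has eigenvalues $\{\pm\sigma_{i}(X)\}$ together with $|m_{1}-m_{2}|$ extra zeros, a matrix of the form \eqref{sparsity_structure} has eigenvalues exactly the numbers $2\pm\sigma_{i}(X)$ (plus possibly some copies of $2$). First I would pin down $\sigma_{1}(X)$ for a feasible $Z$. Any feasible pair $(W,Z)$ in \eqref{main_prob} has $Z\succeq W\succeq 0$ by \eqref{con3} and \eqref{con8}, and \eqref{con4} together with $Z\succeq 0$ forces $Z\1=0$; for $Z$ of the form \eqref{sparsity_structure} this says $X\1_{m_{2}}=-2\,\1_{m_{1}}$ and $X^{T}\1_{m_{1}}=-2\,\1_{m_{2}}$. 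Taking norms gives $\norm{X}\geq 2\sqrt{m_{1}/m_{2}}$ and $\norm{X}\geq 2\sqrt{m_{2}/m_{1}}$, while $Z\succeq 0$ is equivalent to $\sigma_{1}(X)=\norm{X}\leq 2$; hence $\sigma_{1}(X)=2$ and $m_{1}=m_{2}=:m$. (If $m_{1}\neq m_{2}$ there is no feasible $Z$ of the form \eqref{sparsity_structure} at all, for any $c$, which is why the maximization is posed with $m_{1}=m_{2}$.)

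With $\sigma_{1}(X)=2$ the spectrum is transparent: $\lambda_{1}(Z)=2-\sigma_{1}(X)=0$ and $\lambda_{2}(Z)=2-\sigma_{2}(X)\leq 2$, with equality exactly when $\sigma_{2}(X)=0$, i.e. $\mathrm{rank}(X)\leq 1$; together with $\sigma_{1}(X)=2$ and the affine constraint $X\1_{m}=-2\,\1_{m}$ this forces $X=-\tfrac{2}{m}\1_{m}\1_{m}^{T}$, the matrix in the statement --- the sign is essential, since $+\tfrac{2}{m}\1_{m}\1_{m}^{T}$ gives $X\1_{m}=+2\,\1_{m}$ and is infeasible. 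For the other term, $W\1=0$ and $W\succeq 0$ give $\lambda_{1}(W)=0$, and $Z\succeq W$ gives $\lambda_{2}(W)\leq\lambda_{2}(Z)\leq 2$ by Weyl's Monotonicity Theorem. Hence every feasible pair satisfies $\lambda_{2}(Z)+\lambda_{2}(W)\leq 4$. It remains to verify that the choice $Z=W$ with $X=-\tfrac{2}{m}\1_{m}\1_{m}^{T}$ is feasible and attains $4$: $Z\1=0$ and $\mathrm{diag}(Z)=2\,\1$ are immediate, $Z-W=0\succeq 0$, $\1^{T}Z\1=0$, $Z_{11}=2$ meets \eqref{con6} with $\epsilon=0$, the pair respects the $2$-Block pattern (its diagonal $m\times m$ blocks equal $2\I$, so $G(Z)$ has no within-block edge), and $\lambda_{1}(W)+\lambda_{2}(W)=0+2=2\geq c$ precisely when $c\leq 2$ --- the only place that hypothesis enters. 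Since $\lambda_{2}(Z)=\lambda_{2}(W)=2$, the bound $4$ is attained.

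For the last assertion, the estimate $\lambda_{1}(W)+\lambda_{2}(W)\leq 2$ established above holds for every feasible pair whose $Z$ has the form \eqref{sparsity_structure}; if $c>2$ this contradicts \eqref{con2}, so \eqref{main_prob} admits no feasible point with $Z$ --- a fortiori with both $W$ and $Z$ --- of the form \eqref{sparsity_structure}. The step I expect to require the most care is the eigenvalue bookkeeping of the first two paragraphs: matching the spectrum of $Z$ to $\{2\pm\sigma_{i}(X)\}$ correctly, and checking that the rank-one minimizer of $\sigma_{2}(X)$ is consistent with the affine constraint $X\1=-2\,\1$ and lies in $\mathcal{C}$; the remainder is a short computation or a citation of Weyl's inequality.
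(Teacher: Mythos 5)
Your proof is correct and follows essentially the same strategy as the paper's: reduce the spectrum of $Z$ to the singular values of $X$ (the paper equivalently works with eigenvalues $\gamma=\sigma^{2}$ of $X^{T}X$ via a block-determinant identity), pin the top singular value to $2$ using $Z\1=0$, deduce $\lambda_{2}(Z)\leq 2$ with equality at the rank-one $X$, and control $\lambda_{2}(W)$ via $Z\succeq W$ and Weyl. Your write-up is slightly more complete than the paper's in two spots --- it derives directly that $m_{1}\neq m_{2}$ makes the block form infeasible for \emph{any} $X$ (the paper only checks that its proposed $X$ satisfies $Z\1=0$ iff $m_{1}=m_{2}$), and it shows the maximizing $X$ is unique given $\sigma_{1}=2$, $\sigma_{2}=0$, and $X\1=-2\1$ --- but the underlying argument is the same.
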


\begin{proof} 
We begin by characterizing the eigenvalues of \eqref{sparsity_structure}. Note that $\lambda$ is an eigenvalue of $Z$ when $\mathrm{det}(Z - \lambda I ) = 0$. By the $2 \times 2$ block determinant formula \cite[Equation (0.8.5.2)]{horn2012matrix}, whenever $\lambda \neq 2$ 
\begin{equation} \label{det_formula}
\mathrm{det}(Z - \lambda I) = \mathrm{det}\left((2-\lambda) I\right)\, \mathrm{det}\left((2 - \lambda) I - \frac{1}{2 - \lambda} X^{T} X\right).
\end{equation}
Any $\lambda$ for which this determinant is $0$ yields an eigenvalue of $Z$. Since the determinant is the product of the eigenvalues, the determinant in \eqref{det_formula} is $0$ if and only if $(\lambda^{2} - 4 \lambda + 4) I - X^{T} X$ has a zero eigenvalue, i.e. $\lambda^{2} - 4 \lambda + 4 = \gamma$ for some $\gamma$ which is an eigenvalue of $X^{T} X$. More generally, we see that all eigenvalues $\lambda$ of $Z$ satisfying $\lambda \neq 2$ are of the form $\lambda = 2 \pm \sqrt{\gamma}$. By the spectral theorem, $Z$ has $n$ positive eigenvalues, so each of the $n$ positive eigenvalues is either $2$ or a zero of \eqref{det_formula} of the form $\lambda = 2 \pm \sqrt{\gamma}$.

Now our attention shifts to finding the eigenvalues of $X^{T} X$. Since $X^{T} X$ is positive semidefinite, each $\gamma > 0$. For every $\gamma \neq 0$, we get two $\lambda$ values because $\lambda = 2 \pm \sqrt{\gamma}$. From Proposition \ref{lap_full}, the smallest eigenvalue of $Z$ is zero since $\mathbbm{1}$ is in its null space, so there must be a $\gamma = 4$ and the leading eigenvalue of $Z$ is $\lambda = 4$. Since all $\gamma \geq 0$, the Fiedler value $\lambda_{2}(Z) = 2 - \sqrt{\gamma} \leq 2$. But this bound is attained when $X^{T} X$ has all $\gamma = 0$ except for the $\gamma = 4$ required to make $\lambda_{1}(Z) = 0$. We note that $c > 2$ results in infeasibility for $Z$ of the form \eqref{sparsity_structure} because, combined with the constraints in \eqref{main_prob}, it implies $\lambda_{2}(Z) > 2$, which contradicts the bound $\lambda_{2}(Z) \leq 2$ for matrices of this form.

The matrix $X = -\frac{2}{\sqrt{m_{1} m_{2}}} \mathbbm{1}_{m_{1}} \mathbbm{1}_{m_{2}}^{T}$ yields $X^{T} X = \frac{4}{m_{2}} \mathbbm{1}_{m_{2}} \mathbbm{1}_{m_{2}}^{T}$, which has the required eigenvalues $\gamma \in \{0, 0, \dots, 0, 4\}$ producing eigenvalues $\lambda \in \{0, 2, \dots, 2, 4\}$. However, this choice of $X$ yields a $Z$ which satisfies $Z \mathbbm{1} = 1$ if and only if $m_{1} = m_{2}$. In this setting, the chosen $X$ produces an $X$ which satisfies the conditions of the theorem, so we conclude that it maximizes the Fiedler value over all possible choices of $X$.

In the two block case, the sparsity structure of $Z$ and $W$ are equivalent. Moreover, we have $Z \succeq W$ so the Fiedler value of $W$ is less than or equal to that of $Z$. But taking $W = Z$ leaves a feasible $W$ and attains the bound that $\lambda_{2}(W) = \lambda_{2}(Z)$, so we conclude that this $W$ maximizes the Fiedler value. This shows that the proposed $Z$ and $W$ maximize $\lambda_{2}(Z)$ and $\lambda_{2}(W)$ individually, so they also maximize the sum $\lambda_{2}(Z) + \lambda_{2}(W)$.
\end{proof}
\end{toappendix}

If instead of fixing the $*$ block matrices in \eqref{block_design_mat} to zero we allow them to vary, we can generate new algorithms which go beyond the generalized Douglas-Rachford or Malitsky-Tam structure. For example, in Section \ref{Sec:Convergence}, we provide empirical evidence that optimizing for a variety of spectral properties of $Z$ and $W$ across this larger space generates minimum iteration time algorithms with a convergence rate which is invariant to the number of resolvents in some cases. We also find that $d$-Block maximum Fiedler value designs share the between-block fully connected nature of $Z$ found in the extended Ryu algorithm provided by Tam \citep{tam2023frugal}, but with a path graph connecting blocks in $W$ rather than a star.

If we set $d=2$ for even $n$, but (unlike the generalization of Douglas-Rachford in \eqref{2block_fiedler}) relax the constraint $D_{ij} = 0 \quad \forall i \ne j$, thereby allowing the diagonal blocks of $W$ to vary, we can generate a number of different $2$-Block algorithms using objective functions which optimize various spectral properties of $W$. Section \ref{Sec:Convergence} will show that these tend to outperform other block designs with more blocks (and therefore more sparsity). We examine these objective functions further in the next section.

\section{Objective Functions}\label{Sec:Objectives}

The objective function in \eqref{obj} allows us to select among the feasible matrices defined by constraints \eqref{con1}-\eqref{con8}. We introduce a variety of possible objective functions in this section, beginning first with approaches which operate on the spectra of $Z$ and $W$ to target specific graph characteristics which are commonly used as heuristics in the algorithm design process \cite{colla2024optimal}. We then explore a mixed integer framework and a linearization of the problem which allow optimization over the number and placement of edges, rather than edge weights. We use these formulations to design algorithms which minimize the iteration time given any set of computation and communication times.

\subsection{Spectral Objective Functions}

Problem \eqref{main_prob} admits a number of widely-used convex objective functions which use the spectrum of the graph Laplacian to optimize for specific properties. In this section we introduce several of these objectives, including the algebraic connectivity, the second-largest eigenvalue magnitude (SLEM), the total effective resistance, and the spectral norm of the difference between $Z$ and $W$. Exact formulations are available in the appendix's Section \ref{objective_formulations}.

We begin by examining the maximum sum of the Fiedler values for $G(W)$ and $G(Z)$. This can be obtained using the objective function $\phi(W, Z) = -\beta_W\left(\lambda_1(W)+\lambda_2(W)\right)-\beta_Z\left(\lambda_1(Z)+\lambda_2(Z)\right)$, where $\beta_W \geq 0$ and $\beta_Z \geq 0$. In Section \ref{Sec:GraphTheory}, we note that $\lambda_1(W)=0$, so $\lambda_1(W)+\lambda_2(W)$ gives the Fiedler value of $G(W)$ and provides its algebraic connectivity (and likewise with $Z$). We also note that this is objective function yields a convex problem in \eqref{main_prob}.
In graphs with positive edge weights, selecting edge weights which maximize the Fiedler value has been shown to maximize the mixing rate for a continuous time Markov chain defined over those edges \citep{boyd2006convex}. This has led to its selection as a common heuristic for a number of decentralized algorithms \citep{colla2024optimal}.
Figure \ref{fig:maxfiedler} provides an example of matrices $W$ and $Z$ formed with the maximum Fiedler value objective function over a 5-Block design on 10 resolvents.
\begin{figure*}
    \centering
    \begin{subfigure}[b]{0.475\textwidth}
        \centering
        \includegraphics[width=\textwidth]{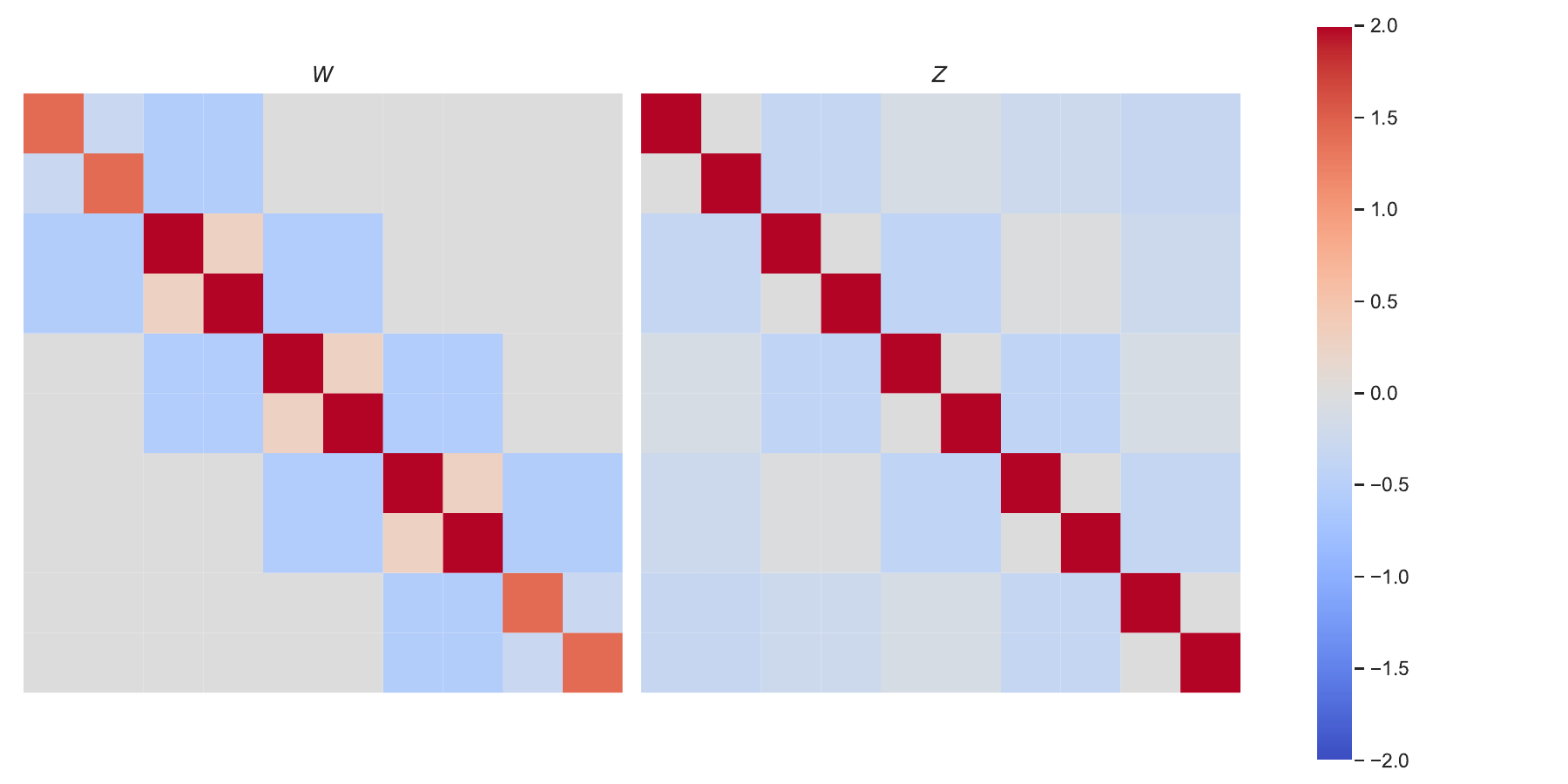}
        \caption[Max Fiedler Value]%
        {{\small Max Fiedler Value}}    
        \label{fig:maxfiedler}
    \end{subfigure}
    \hfill
    \begin{subfigure}[b]{0.475\textwidth}  
        \centering 
        \includegraphics[width=\textwidth]{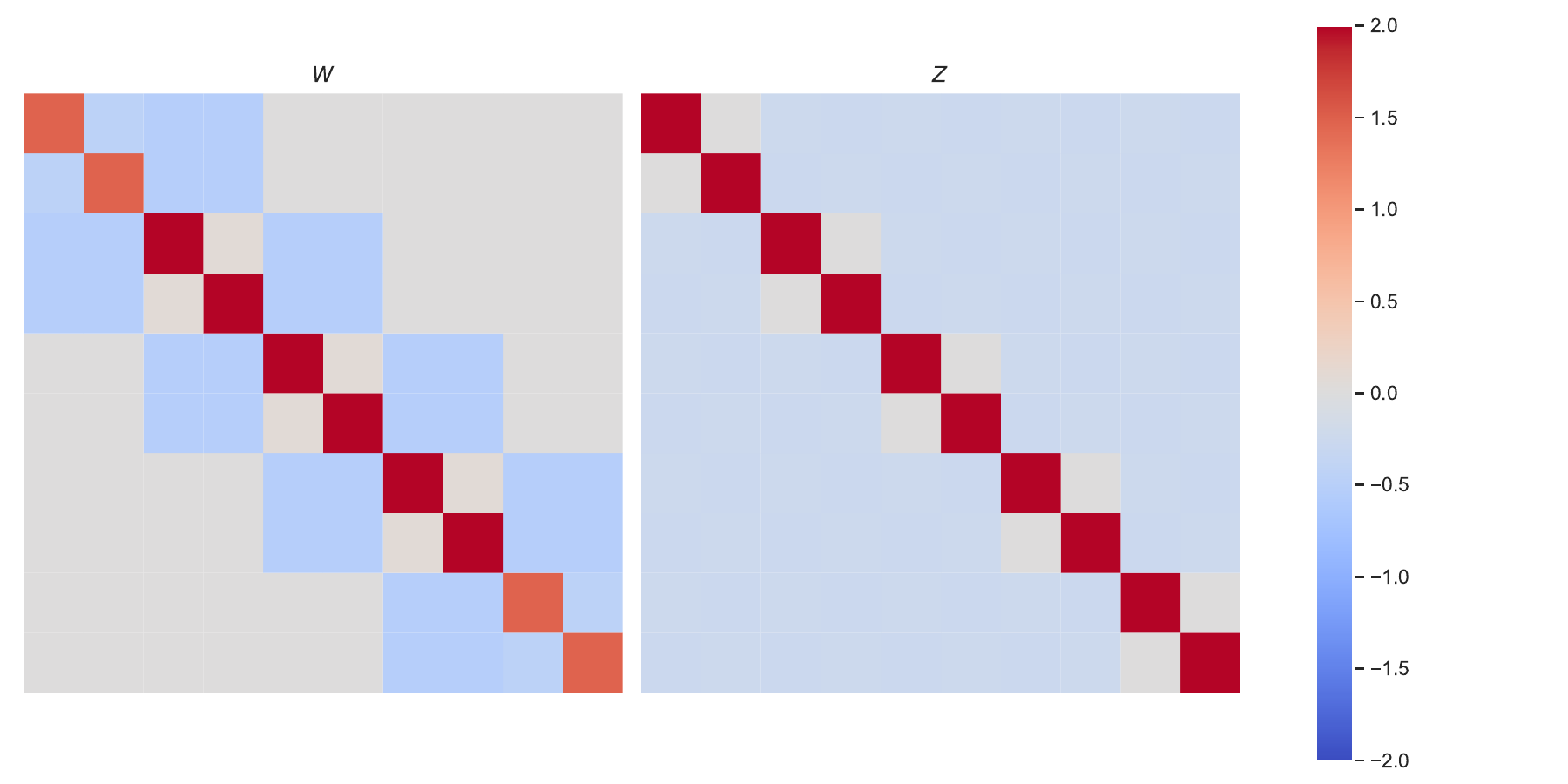}
        \caption[Min SLEM]%
        {{\small Min SLEM}}    
        \label{fig:slem}
    \end{subfigure}
    \vskip\baselineskip
    \begin{subfigure}[b]{0.475\textwidth}   
        \centering 
        \includegraphics[width=\textwidth]{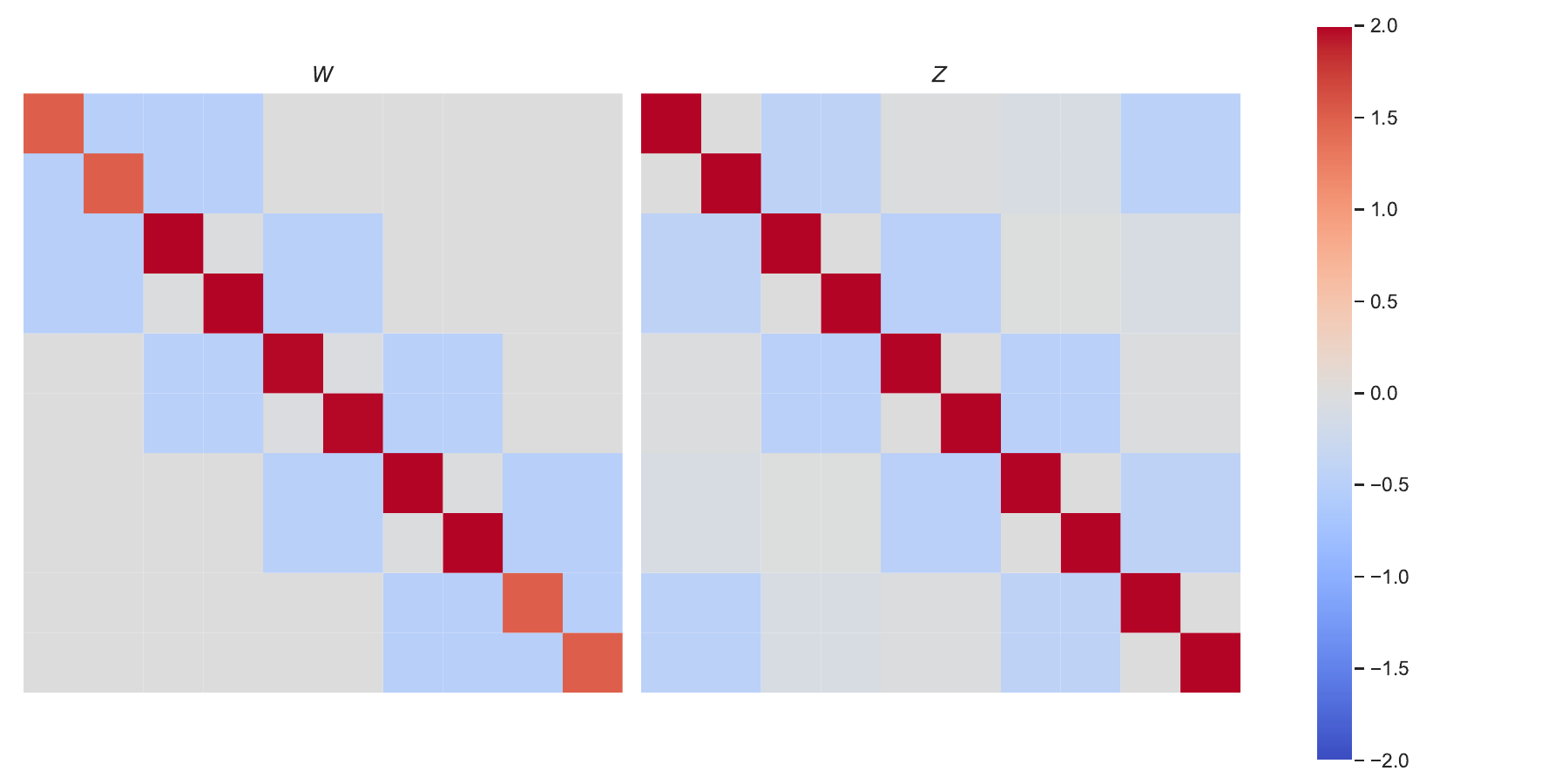}
        \caption[Min Total Effective Resistance]%
        {{\small Min Total Effective Resistance}}    
        \label{fig:ter}
    \end{subfigure}
    \hfill
    \begin{subfigure}[b]{0.475\textwidth}   
        \centering 
        \includegraphics[width=\textwidth]{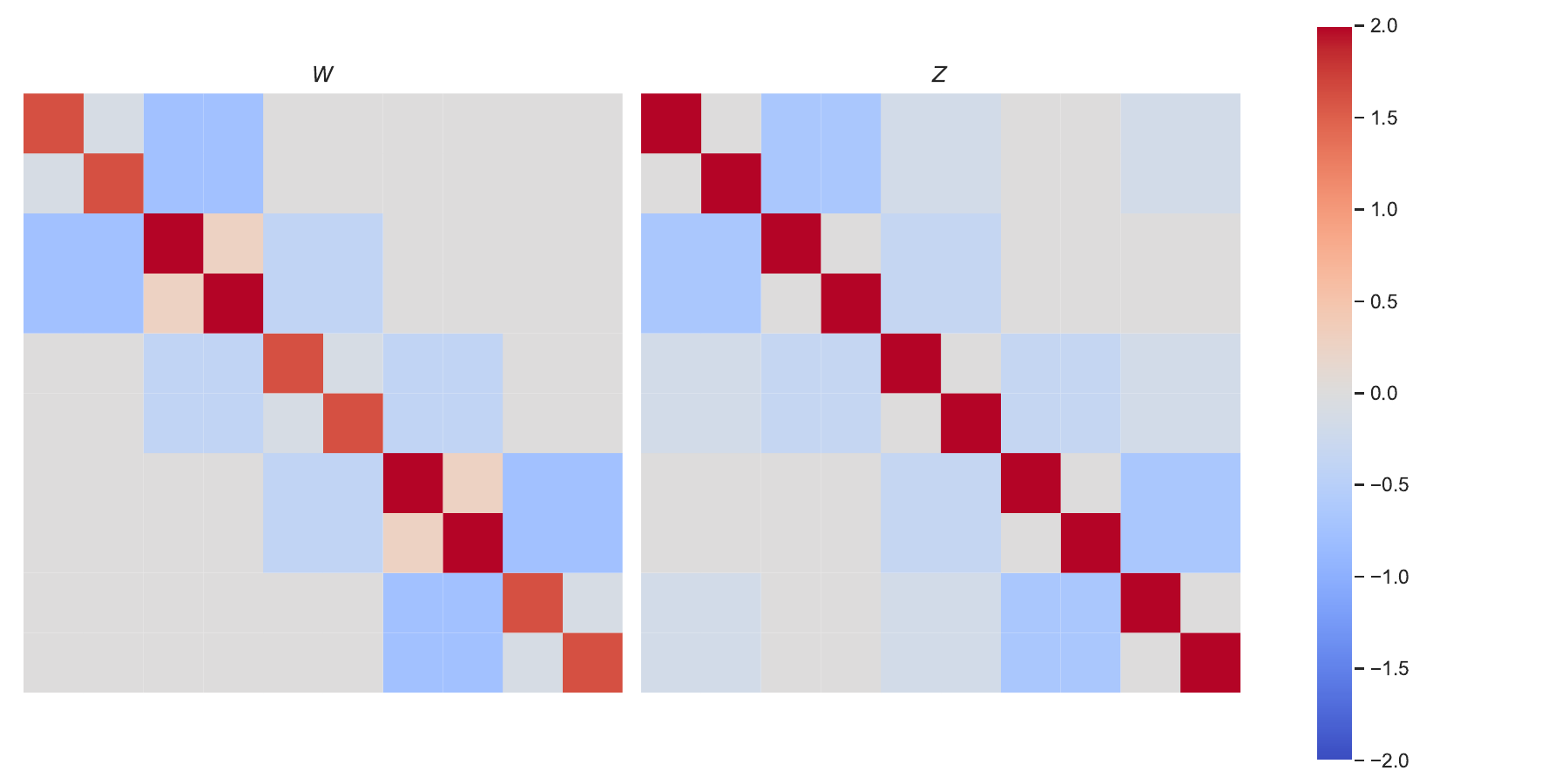}
        \caption[Min $\norm{Z-W}$]%
        {{\small Min $\norm{Z-W}$}}    
        \label{fig:minzw}
    \end{subfigure}
    \caption[5-Block sparsity pattern over 10 resolvents using different spectral objectives]
    {\small 5-Block sparsity pattern over 10 resolvents using different spectral objectives} 
    \label{fig:spectral}
\end{figure*}
  
The SLEM provides another popular heuristic for the design of decentralized algorithm mixing matrices. Given a symmetric Markov Chain, the SLEM is known to maximize its rate of convergence to its equilibrium \cite{boyd2006convex}. If $G(W)$ and $G(Z)$ have only positive edge weights, we can form stochastic matrices $P_w$ and $P_z$ by setting $\epsilon=0$, $P_w = \I - W/2$ and $P_z = \I - Z/2$. The SLEM of $P_w$ is then given by $\max{\left\{\abs{1-\lambda_2(W)}, \lambda_n(W)-1\right\}}$, and similarly for $P_z$, giving us $\phi(W, Z) = \beta_Z\max{\left\{\abs{1-\lambda_2(Z)}, \lambda_n(Z)-1\right\}}+\beta_W\max{\left\{\abs{1-\lambda_2(W)}, \lambda_n(W)-1\right\}}$ \citep{boyd2006convex}. This is a convex function of $W$ and $Z$ which can be minimized for either or both, with a trade-off coefficient determining the weight of each. 
Figure \ref{fig:slem} demonstrates the results of minimizing the SLEM over a 5-Block design on 10 resolvents.
  
Another popular heuristic is the total effective resistance. For any given weighted graph $G(K)$, the total effective resistance of the graph is defined as $\frac{1}{n}\sum_{i=2}^n \frac{1}{\lambda_i(K)}$. For a graph with positive edge weight, the total effective resistance has been shown to minimize the average commute time, over all pairs of vertices, in the random walk on the graph with the transition probability given by the ratio of the edge weights over the node degree for any given node \citep{boyd2006convex}. It is also convex and therefore tractable in \eqref{main_prob} with $\phi(W, Z) = \beta_W\frac{1}{n}\sum_{i=2}^n \frac{1}{\lambda_i(W)} + \beta_Z\frac{1}{n}\sum_{i=2}^n \frac{1}{\lambda_i(Z)}$. Figure \ref{fig:ter} demonstrates the output of this objective. Minimal total effective resistance provides the best convergence rate among the objectives presented here on a wide variety of problems, and demonstrates a convergence rate invariant to problem size in our experiments over any set of $n$ identical $l$-Lipschitz, $\mu$-strongly monotone operators.

The final spectral objective we consider is $\min \norm{Z-W}$ where $\norm{\cdot}$ indicates the spectral norm ($\lambda_n(\cdot)$). This objective has the benefit of balancing the inputs ($\v{v}$ or $-\v{M}^T \v{z}$) to a given resolvent with the resolvent inputs ($\v{x}$). 
For the 2-Block design, $\min \norm{Z-W}$ returns $W=Z$, which also minimizes total resistance subject to these constraints. The minimal spectral norm and minimal total resistance designs are not equal in general, however, as seen in Figures \ref{fig:ter} and \ref{fig:minzw}. 

These objectives highlight the breadth of design options available in \eqref{main_prob}. Many more could be developed. Section \ref{Sec:Convergence} provides a comparison of the various objectives presented here on different problem classes. The maximum Fiedler value and minimum total effective resistance provide particularly promising worst case convergence rate results using the PEP framework.

\subsection{Edge-based Objective Functions}\label{Sec:mip_formulations}

A number of valuable algorithmic properties can only be modeled by shifting to a framework which captures whether entries in $Z$ and $W$ are nonzero. 
We therefore introduce binary variables for each of the off-diagonal entries in $Z$ and $W$. 
The sparsity-maximizing formulation we describe in \eqref{min_edge_sdp} provides the most direct application of these binary variables.
The remainder of this subsection describes the use of this mixed integer approach to minimize algorithm iteration time.

\subsubsection{Minimum Iteration Time}

While the block designs in Section \ref{Sec:block_min} achieve minimal algorithm design iteration time when all resolvent computation times are constant, they do not provide minimal iteration times over arbitrary compute and communication times. 
To minimize the iteration time in this more challenging setting, we find the longest stretch of dependent resolvent calculations (as shown in Figure \ref{fig:activity}) over $r$ iterations of the algorithm. We do so with an MISDP method which is an extension of the critical path method found in project scheduling \cite{kelley1961critical}, though our MISDP approach is more complex than the (MI)LP problems typically found in that literature.
Figure \ref{fig:misdp_gantt} demonstrates the utility of this formulation relative to $d$-Block designs. 
In this example, we show the algorithm execution time for 6-Block, 3-Block, 2-Block, and minimum iteration designs for a problem with a mix of long and short resolvent computation times. In each of the block designs, the alternating sets of adjacent blocks each have a least one long resolvent computation time, resulting in average iteration times which are almost double the minimum possible. The design produced by our MISDP algorithm clearly outperforms the $d$-Block designs, completing 12 iterations in the displayed time interval.

\begin{figure*}
  \centering
  \begin{subfigure}[b]{0.475\textwidth}
      \centering
      \includegraphics[width=\textwidth]{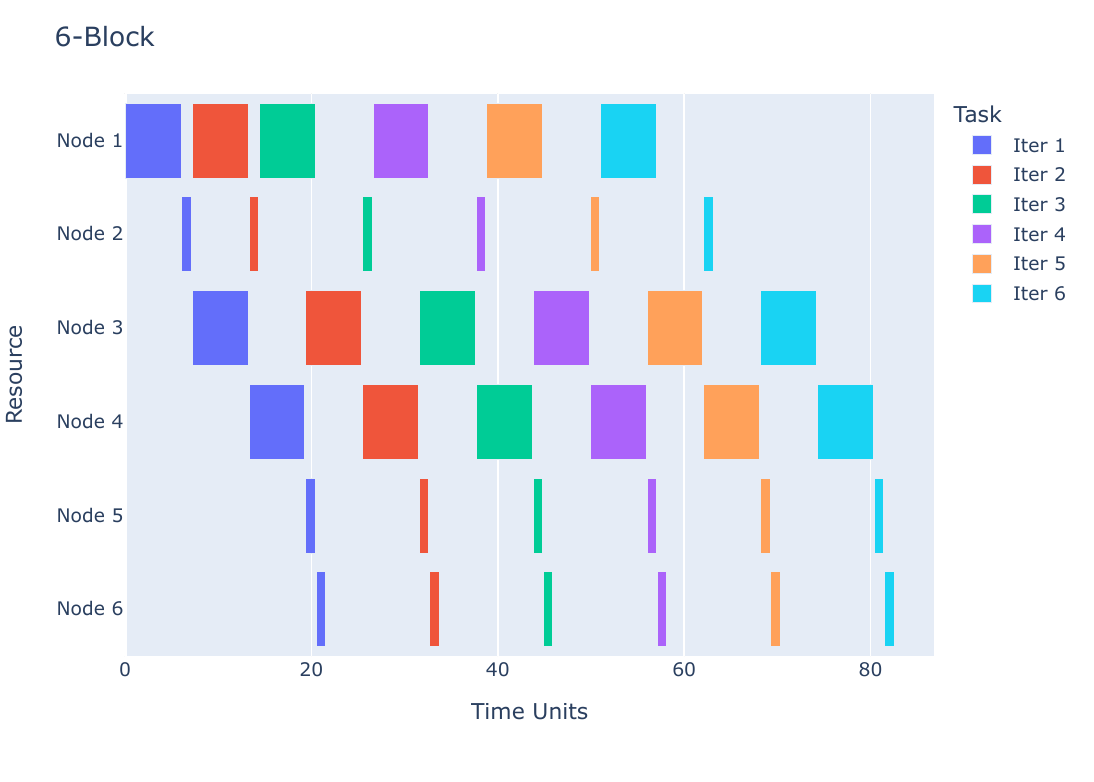}
      \caption[6-Block]%
      {{\small 6-Block}}    
      \label{fig:misdp_6}
  \end{subfigure}
  \hfill
  \begin{subfigure}[b]{0.475\textwidth}  
      \centering 
      \includegraphics[width=\textwidth]{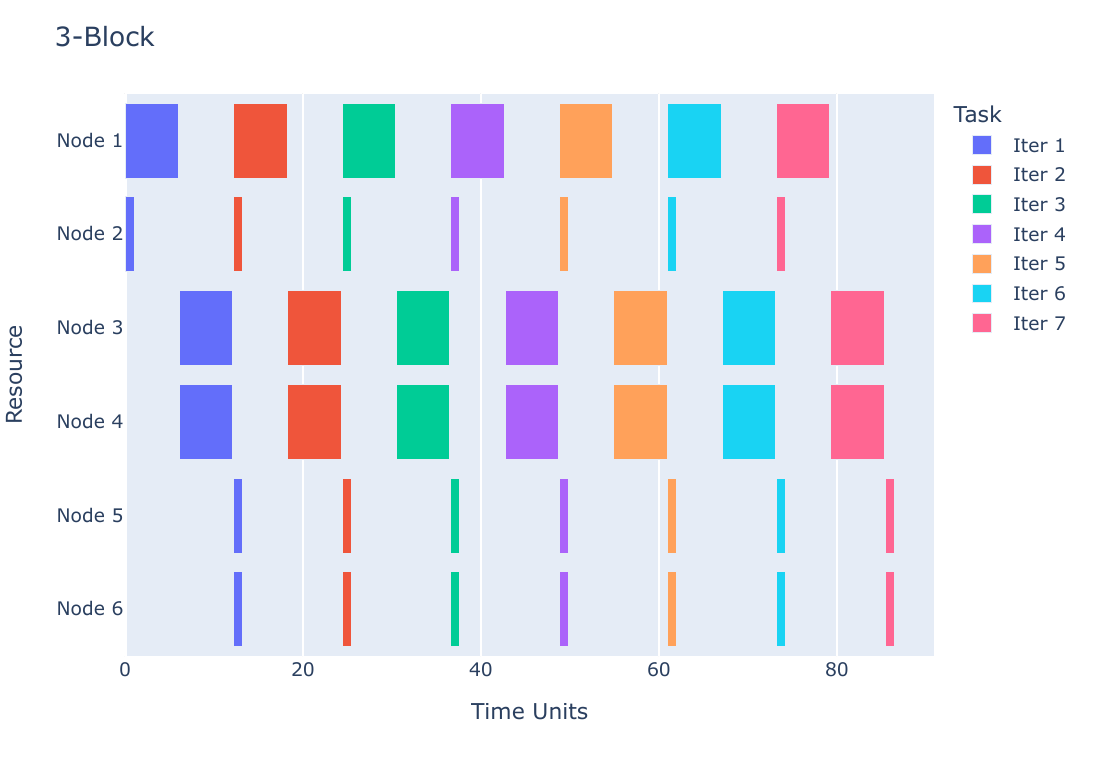}
      \caption[3-Block]%
      {{\small 3-Block}}    
      \label{fig:misdp_3}
  \end{subfigure}
  \vskip\baselineskip
  \begin{subfigure}[b]{0.475\textwidth}   
      \centering 
      \includegraphics[width=\textwidth]{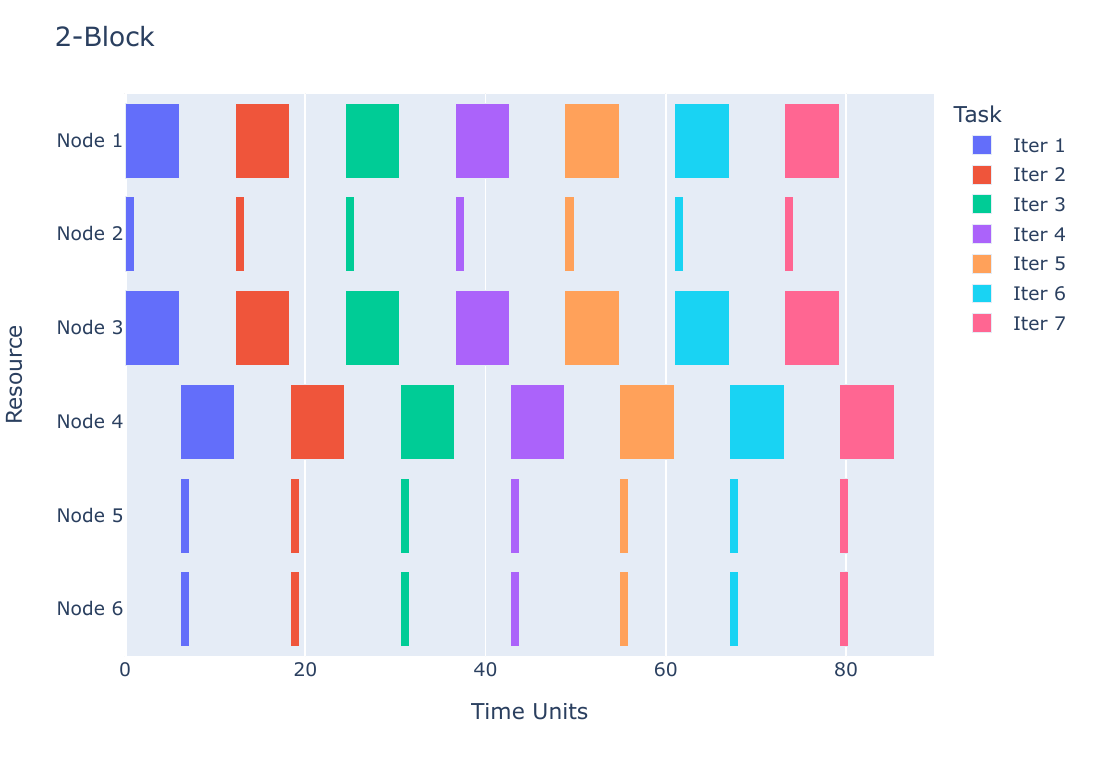}
      \caption[2-Block]%
      {{\small 2-Block}}    
      \label{fig:misdp_2}
  \end{subfigure}
  \hfill
  \begin{subfigure}[b]{0.475\textwidth}   
      \centering 
      \includegraphics[width=\textwidth]{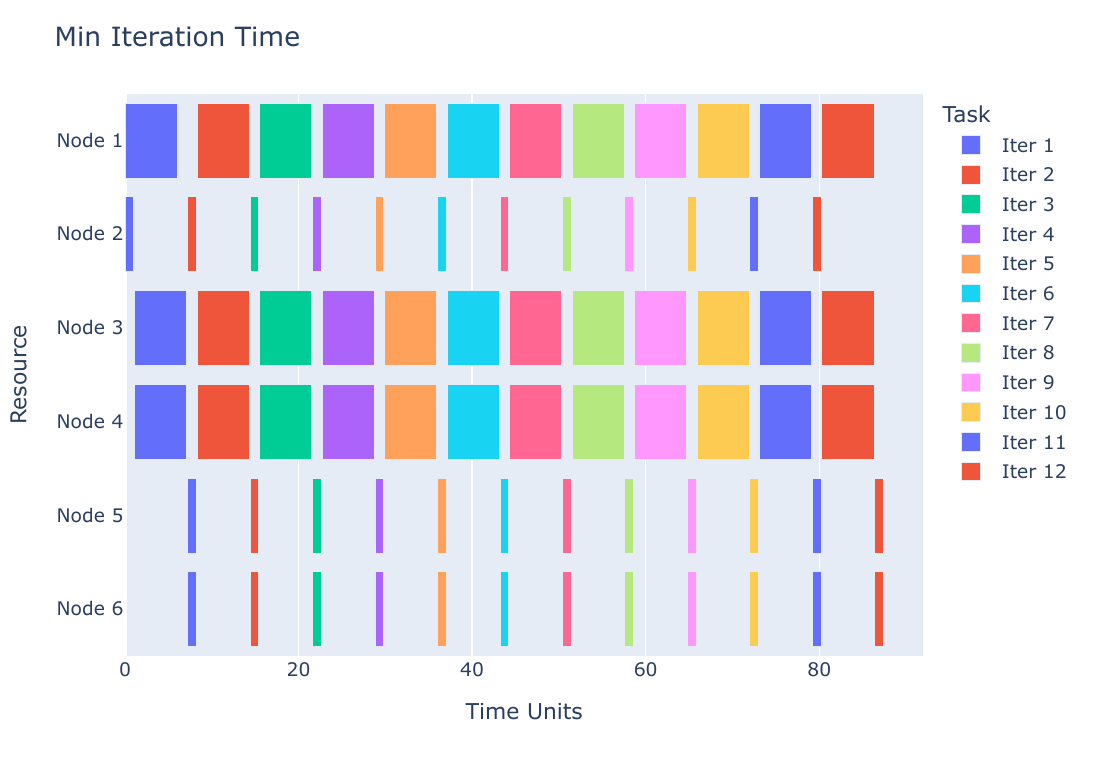}
      \caption[Iteration Time Minimizing \eqref{misdp_mincycle}]%
      {{\small Iteration Time Minimizing \eqref{misdp_mincycle}}}    
      \label{fig:misdp_min_cycle}
  \end{subfigure}
  \caption[$d$-Block and Minimum Iteration Time Design Execution Timelines]
  {\small $d$-Block and Minimum Iteration Time Design Execution Timelines} 
  \label{fig:misdp_gantt}
\end{figure*}

We introduce formalize the MISDP which minimizes the running time of $r$ iterations of \eqref{n_iteration} below. As shown in Figure \ref{fig:misdp_6}, early iterations of the algorithm often have different patterns of completion than later iterations, due to the initial availability of all initialization values, so we recommend selecting $r \geq n$ iterations to model in the critical path. If we have constant communication times, we can also use the lower bound provided in Section \ref{Sec:block_min} to tighten the formulation, which provides a significant performance improvement in the MISDP solution time. Otherwise, we can take this lower bound, which we denote $q$ as $q = \max_i{\{t_i + \min_j{\{l_{ij}\}\}}} + \min_i{\{t_i + \min_j{\{l_{ij}\}}\}}$, which again helps to tighten the formulation.

\textbf{Parameters and Decision Variables}
\begin{notation*}
  r \geq n & integer parameter for number of iterations over which to minimize \\
  t_{i} \geq 0 & parameter for time to compute resolvent $i$\\
  l_{ij} \geq 0 & parameter for time to communicate from $i$ to $j$ \\
  a > 0 & parameter larger than any possible time difference between resolvent start times to relax inactive constraints \\
  q > 0 & parameter for a lower bound on the single iteration time\\
  b \geq 0 & decision variable tracking difference between the run time over $r$ iterations and the lower bound \\
  x_{ij} \in \{0,1\} & decision variable which is 1 if $Z_{ij}$ is nonzero\\
  y_{ij} \in \{0,1\} & decision variable which is 1 if $W_{ij}$ is nonzero\\
  s_{ki} \geq 0 & decision variable for time from algorithm start until completion of resolvent calculation $i$ in iteration $k$ 
\end{notation*}

\begin{subequations}\label{misdp_mincycle}
  \begin{align}
  \min_{x, y, b, s, Z, W} \quad & b \nonumber \\
  \text{s.t.} \quad & s_{kj} - s_{ki} \geq (t_i + l_{ij}+a)x_{ij} - a\quad \forall k \in [r], i < j \in [n] \label{misdp_sin} \\
  & s_{k+1,i} - s_{kj} \geq (t_j + l_{ji}+a)y_{ij} - a \quad \forall k \in [r], i < j \in [n] \label{misdp_soutjk} \\
  & s_{k+1,j} - s_{ki} \geq (t_i + l_{ij}+a)y_{ij} - a \quad \forall k \in [r], i < j \in [n] \label{misdp_soutkj}\\
  & b \geq \max_{i<j\in [n]} \left\{ s_{ri} + t_i + l_{ij}y_{ij} \right\} - r q \label{misdp_cycle_bound} \\
  & \text{SDP constraints \eqref{con1}-\eqref{con7}} \\ %
  & \text{Integer bounding constraints \eqref{misdp_zx}-\eqref{misdp_wy}} \\
  & \text{Optional Cutting Plane Constraints \eqref{misdp_opt_xn}-\eqref{misdp_opt_y1}}
\end{align}
\end{subequations}
Constraint \eqref{misdp_sin} forces the start time $s_{kj}$ of resolvent $j$ in iteration $k$ to fall on or after the completion time ($s_{ki} + t_i + l_{ij}$) of resolvent $i$ in iteration $k$ if $x_{ij} = 1$, and relaxes the constraint otherwise.
Constraints \eqref{misdp_soutjk} and \eqref{misdp_soutkj} ensure that resolvents $i$ and $j$ cannot start iteration $k+1$ until after the completion of each in iteration $k$ when $y_{ij} = 1$, and relaxes the constraint otherwise.
Constraint \eqref{misdp_cycle_bound} sets $b$ as the difference between the run time bound for $r$ iterations ($rq$) and the actual end time for iteration $r$.  
The SDP, integer, and cutting plane constraints are all inherited from \eqref{main_prob} and \eqref{mip_sdp}.
For relatively small problems ($n<10$), this approach can identify iteration time minimizing designs which improve upon the block designs presented in Section \ref{Sec:block_min}.

\subsubsection{Mixed Integer Formulation}\label{Sec:mip}

We can solve larger minimum iteration time problems by reformulating the MISDP as a Mixed Integer Linear Program (MILP) with a restriction on $W$, $Z$, and $Z-W$ to allow only negative off-diagonal values for each. 
The minimum iteration time edges returned by the MILP may then be used to form a constraint set for the original SDP to achieve specific spectral objectives. The following theorem establishes the connection between the SDP and its restricted linear counterpart.

\begin{theoremrep}\label{linear_sdp}
Any $W$ and $Z$ satisfying the linear constraints \eqref{linear_symmetric}-\eqref{linear_connected} also satisfy the constraints in \eqref{main_prob} for some constant $c > 0$ and $\mathcal{C} \subseteq \Sp^{n} \times \mathbb{S}^{n}$.

\begin{subequations}
\begin{align}
  Z, W &\in \mathcal{S}^n \label{linear_symmetric}\\
  Z_{ij} &\leq W_{ij} \leq 0 \quad \forall i,j \in [n], i\neq j\label{linear_zw_bound}\\
  W \1 &= 0 \label{linear_w}\\
  2 - \epsilon &\leq Z_{11} \leq 2 + \epsilon \label{linear_eps}\\
  \mathrm{diag}(Z) &= Z_{11} \label{linear_z_diag}\\
  Z \1 &= 0 \label{linear_z}\\
   G(W) &\mathrm{ \;is\; connected.}\label{linear_connected}
  \end{align}  
\end{subequations}
\end{theoremrep}
\begin{proof}
  For $i \in [n]$, let $R_i^z = \sum_{i \ne j} \abs{Z_{ij}}$ and $D(Z_{ii},R_i^z) \subseteq \R$ be a closed disc of radius $R_i^z$ centered at $Z_{ii}$. 
  By the Gershgorin Circle Theorem, every eigenvalue of $Z$ lies in at least one such disc. 
  By constraints \eqref{linear_zw_bound}, \eqref{linear_z_diag}, and \eqref{linear_z}, $R_i^z = Z_{11}$ and $D(Z_{ii},R_i^z) = [0, 2Z_{11}]$. Therefore $\lambda_i(Z) \geq 0 \quad \forall i \in [n]$ and $Z \succeq 0$.
Constraints \eqref{linear_zw_bound} and \eqref{linear_w} similarly imply $D(W_{ii},R_i^w) = [0, 2 R_i^w] \quad \forall i \in  \{1 \dots n\}$, and $W \succeq 0$.
  Let $V = Z-W$. Since $Z_{ij} \leq W_{ij}$, $V_{ij} = Z_{ij} - W_{ij} \leq 0$. Constraints \eqref{linear_w}, \eqref{linear_z_diag}, and \eqref{linear_z} set $V_{ii} = Z_{ii} - W_{ii} = -\sum_{i \ne j} Z_{ij} - W_{ij} = -\sum_{i \ne j} V_{ij}$, and $V_{ii} \geq 0$. We also have $R_i^v = -\sum_{i \ne j} V_{ij}$, so $D(V_{ii},R_i^v) = [0, 2 R_i^v] \quad \forall i \in  [n]$, and $V = Z-W \succeq 0$.

  Constraint \eqref{linear_w} directly satisfies $W\1 =0$.

  Constraint \eqref{linear_z} directly satisfies $\1^T Z \1 = 0$.

  If $G(W)$ is a connected graph with positive edge weights, it satisfies $\lambda_2(W) = \lambda_1(W) + \lambda_2(W) > 0$ \cite{fiedler1975property}. We can therefore find a valid value $c = \lambda_2(W) > 0$ for which $W$ is a solution \eqref{main_prob}. 

  Finally, since both $W$ and $Z$ are positive semidefinite, they satisfy the constraints in \eqref{main_prob} for any $\mathcal{C} \subseteq \mathbb{S}_{+}^{n} \times \mathbb{S}^{n}$ which includes $(W, Z)$. 

\end{proof}

Theorem \ref{linear_sdp} means that, along with a constraint on the connectedness of $G(W)$, we can formulate a restricted version of our minimum iteration time MISDP \eqref{misdp_mincycle} as a mixed integer linear program. The restrictions in \eqref{linear_zw_bound} remove from consideration graphs with negative edge weights and graphs in which $G(W)$ has edges which are not present in $G(Z)$. The existing Malitsky-Tam, Ryu and extended Ryu algorithms presented in Section \ref{Sec:Preliminaries} all meet this restriction. 
There are at least two approaches for linearly constraining $W$ to provide connectivity in $G(W)$ when it has positive edge weights. 
The first verifies that each subset $\mathcal{S}$ of the graph's nodes has positive edge weight over its cutset (the set of edges with one node in $\mathcal{S}$ and the other in $\mathcal{S^C}$), so that every subset is connected, and the graph is therefore connected. 
The second verifies a network flow can move from a source node to every other node. The number of constraints in the cutset approach scales exponentially in $n$, whereas the network flow requires a set of additional continuous variables and constraints which scale quadratically in $n$. We provide the flow-based formulation here.

The flow-based formulation uses the additional notation presented below to fix parameters for the source and sink values for the network and define variables for its flow. In the following formulation we eliminate $W$ in the program as a decision variable, defining it after completion as $W_{ij} = y_{ij} Z_{ij}$ for $i \ne j$, and $W_{ii} = - \sum_{j \ne i}W_{ij}$. This restriction on $W$ comes with the benefit of scaling to larger problems.

\begin{notation*}
h \in \R^n & supply/demand parameter placing a supply value of $n-1$ at resolvent 1 ($h_1=n-1$) and a unit demand value at each of the other resolvents ($h_i = -1$)\\ %
f_{ij} \geq 0 \quad \forall i \ne j & decision variable for flow from $i$ to $j$  \\
\end{notation*}

\begin{subequations}\label{min_cycle_fmip}
\begin{align}
  \min_{b, x, y, Z, f, s} \quad & b \nonumber \\
    \text{s.t.} \quad & Z_{ij} \leq 0 \quad \forall i \ne j\label{fmip_z2} \in [n] \\
    & y_{ij} \leq x_{ij} \quad \forall i < j \in [n] \label{fmip_yx}\\
    & -Z_{ij} \leq (2+\epsilon)x_{ij} \quad \forall i < j \in [n] \label{fmip_zlex} \\
    & -Z_{ij} \geq x_{ij}/(n-1) \quad \forall i < j \in [n] \label{fmip_zgex} \\
    & \sum_{j \ne i}f_{ij} - \sum_{j \ne i}f_{ji} = h_i \quad \forall i \in [n]  \label{fmin_flow}\\
    & f_{ij} \leq (n-1) y_{ij} \quad \forall i \ne j  \in [n] \label{fmin_fy} \\
    & \text{Minimum Iteration Time Constraints \eqref{misdp_sin}-\eqref{misdp_cycle_bound}}  \label{fmip_min_cycle} \\
    & \text{Cutting Plane Constraints \eqref{misdp_opt_xn}-\eqref{misdp_opt_y1}} \label{fmip_opt}\\
    & \text{Linear SDP constraints \eqref{con1}, \eqref{con5}} \label{mip_sdp}
  \end{align}
\end{subequations}

Problem \eqref{min_cycle_fmip} uses constraints \eqref{fmin_flow} and \eqref{fmin_fy} to require the existence of a path from resolvent 1 to each of the other resolvents by forcing the flow of the $n-1$ value at 1 only via edges allowed in $y$, thereby requiring connectivity in $y$. 
Constraints \eqref{fmip_yx} and \eqref{fmip_zgex} link the values of $Z_{ij}$ and $x_{ij}$ to $y_{ij}$, forcing them to represent connected graphs when $y$ does. The lower bound in \eqref{fmip_zgex} forces an entry of at least $1/(n-1)$ in $-Z_{ij}$ when $x_{ij}=1$, which is half the value of the weight on the edges of the fully connected design; other small positive values could be used instead.
Setting $W_{ij} = y_{ij}Z_{ij}$ then makes $G(W)$ a connected graph.
Problem \eqref{min_cycle_fmip} therefore satisfies the requirements of Theorem \ref{linear_sdp}, and guarantees that $W$ and $Z$ satisfy the constraints in \eqref{main_prob}. 
We then apply the minimum iteration time constraints in \eqref{misdp_mincycle} for $s$ and $b$ using the integer-valued $x$ and $y$, and the optional cutting planes to tighten the formulation. These modifications allow the MILP in \eqref{min_cycle_fmip} to generate designs for larger problems than \eqref{misdp_mincycle} because it does not require solving difficult MISDPs.

\section{Convergence Rates and Optimal Tuning}\label{Sec:PEP}

In this section, we describe the use of the Performance Estimation Problem (PEP) approach described in \citep{drori2014performance} and \citep{ryu2020operator} to optimize the step size $\gamma$ and matrix $W$ with respect to convergence rates of algorithms designed with \eqref{main_prob} across a variety of problem classes. These classes include sums of strongly monotone operators, Lipschitz operators, and the subdifferentials of closed, convex and proper functions, including indicator functions over convex sets, as long as the sum over the operators is guaranteed to have a zero. We note that throughout this section, as in Theorem \ref{main_theorem}, the modulus of strong monotonicity of an operator is permitted to be $0$.

We begin first with an analysis of iteration \eqref{d_iteration}, which provides both the worst-case contraction factor for a fixed $L$, $M$, and $\gamma$ over all initializations and all monotone operators satisfying certain assumptions, as well as the ability to optimize the step size with respect to the contraction factor of $\v{z}$. We then present an analysis of iteration \eqref{n_iteration} which allows us to determine the contraction factor of $\v{v}$ and to optimize $W$ and/or $\gamma$ with respect this contraction factor.

\subsection{Optimal Step Size}\label{Sec:MPEP}

In this section we build a PEP which upper bounds the worst-case contraction factor, $\tau = \norm{\v{z}_1^{1} - \v{z}_2^{1}}^2 / \norm{\v{z}_1^{0} - \v{z}_2^{0}}^2$, of \eqref{d_iteration} over all initializations and operators $(A_{i})_{i=1}^{n}$ which are strongly monotone and Lipschitz with respect to some constants $\mu_{i}$ and $l_{i}$, respectively. The first step is to build a PEP which has optimal value which upper bounds $\tau$. When $\mathrm{dim}(\mathcal{H}) \geq d+n$ we show the bound is tight. We then dualize this PEP and note that the primal and dual problems enjoy strong duality. Finally, we note that the dual problem can be parametrized with respect to the step length $\gamma$, so that it can be optimized over in the dual problem, resulting in a value of $\gamma$ which minimizes our bound on the worst-case contraction factor. In what follows, we only introduce and state the dual of the PEP. A detailed derivation of both the primal and dual problems appears in the appendix.

For fixed $M \in \mathbb{R}^{d \times n}$ and $L \in \mathbb{R}^{n \times n}$, define block matrices $K_I$ and (for $i \in [n]$) $K_{\mu_i}$ and $K_{l_i} \in \mathcal{S}^{d+n}$ as:
\begin{align}
K_I &= \begin{bmatrix}
        \I & 0 \\
        0 & 0
    \end{bmatrix} \\
K_{\mu_i} &= \begin{bmatrix}
0 & -\frac{1}{2}M_{\cdot i} e_i^T  \\
-\frac{1}{2}e_i M_{\cdot i}^T & \frac{1}{2}\left(e_i L_{i \cdot} + \left(e_i L_{i \cdot}\right)^T\right)  - (1 + \mu_i)e_{i} e_{i}^{T}  \\
    \end{bmatrix}\\
K_{l_i} &= \begin{bmatrix}
-M_{\cdot i}M_{\cdot i }^T & M_{\cdot i}\left(L_{i\cdot} - e_i^T\right) \\
\left(L_{i\cdot} - e_i^T\right)^T M_{\cdot i}^T & \quad -\left(L_{i\cdot} - e_i^T\right)^T\left(L_{i\cdot} - e_i^T\right) + e_i e_i^T l_i^2
        \end{bmatrix}\\
\end{align}
Also define a function $\tilde{S}: \mathbb{R}^{n} \times \mathbb{R}^{n} \times \mathbb{R} \times \mathbb{R} \to \mathbb{S}^{d+n+d}$ as:
\begin{align}
\tilde{S}(\phi, \lambda, \psi, \gamma) &= \begin{bmatrix}
    - \sum_i \phi_{i} K_{\mu_i} - \sum_i \lambda_{i}K_{l_i} + \psi K_I& \begin{bmatrix}
        \I \\
        \gamma M^T
    \end{bmatrix} \\
    \begin{bmatrix}
        \I & \gamma M
    \end{bmatrix} & \I
\end{bmatrix}
\end{align}

\begin{theoremrep}\label{m_dual_thm}
Let $M \in \mathbb{R}^{d \times n}$, $L \in \mathbb{R}^{n\times n}$, and $(\mu_{i})_{i=1}^{n}$ and $(l_{i})_{i=1}^{n}$ satisfy  $0 \leq \mu_{i} < l_{i}$ for all $i \in [n]$. Consider the problem 
\begin{subequations}\label{pepm_thm} 
    \begin{align} 
    \min_{\phi, \lambda, \psi, \gamma} \quad & \psi \label{pepm_obj}\\
\mathrm{subject\, to} \quad & \tilde{S}\left( \phi, \lambda, \psi, \gamma \right) \succeq 0  \label{pepm_S}\\
& \phi \geq 0, \; \lambda \geq 0\label{pepm_lam}\\
& \phi, \lambda \in \mathbb{R}^{n}\\
& \psi, \gamma \in \mathbb{R}.
\end{align}
\end{subequations}
\begin{enumerate}[(i)]
\item In algorithm \eqref{d_iteration}, if the value of $\gamma$ is provided, then fixing $\gamma$ in \eqref{pepm_thm} and optimizing over the remaining variables provides an optimal value which upper bounds the worst-case contraction factor
$$\tau = \frac{\|\v{z}_{1}^{1} - \v{z}_{2}^{1}\|^{2}}{\|\v{z}_{1}^{0} - \v{z}_{2}^{0}\|^{2}}$$
of \eqref{d_iteration} over all initial values $\v{z}_{1}^{0}$ and $\v{z}_{2}^{0}$ and all possible     $\mu_{i}$-strongly monotone $\l_{i}$-Lipschitz operators $(A_{i})_{i\in[n]}$. When                      $\mathrm{dim}(\mathcal{H}) \geq d+n$, this bound is tight. \label{thm_d_pt1}
\item If $\gamma$ is optimized over, along with the remaining variables in \eqref{pepm_thm}, then this is the choice of $\gamma$ which minimizes the upper bound on the worst-case contraction factor in \eqref{thm_d_pt1}.
\end{enumerate}
\end{theoremrep}
\begin{proof}
    
Denote by $\mathcal{Q}_{i}$ the set of $\mu_{i}$-strongly monotone and $l_{i}$ Lipschitz operators. A PEP problem for the contraction factor $\tau$ in algorithm \eqref{d_iteration} is

\begin{subequations}\label{pep1}
    \begin{align}
    \max_{\v{z}_1^0, \v{z}_2^0, \v{z}_1^1, \v{z}_2^1, \v{x}_1, \v{x}_2, \v{y}_{1}, \v{y}_{2}, A_{1}, \dots A_{n}} &\quad  \frac{\norm{\v{z}_1^{1} - \v{z}_2^{1}}^2}{\norm{\v{z}_1^{0} - \v{z}_2^{0}}^2} \label{pep1_obj}\\
    \mathrm{subject\, to} &\quad \v{z}_1^{1} = \v{z}_1^{0} + \gamma \v{M} \v{x}_1 \label{pep1z1}\\
     &\quad \v{z}_2^{1} = \v{z}_2^{0} + \gamma \v{M} \v{x}_2 \label{pep1z2}\\
     &\quad x_{1i} = J_{A_i}(y_{1i}) \quad \forall i \in [n]\label{pep1x1}\\
     &\quad y_{1i} = \sum_{j=1}^d -M_{ji} z_{1j}^0 + \sum_{j=1}^n L_{ij} x_{1j} \quad \forall i \in [n]\label{pep1y1}\\
     &\quad x_{2i} = J_{A_i}(y_{2i}) \quad \forall i \in [n]\label{pep1x2}\\
     &\quad y_{2i} = \sum_{j=1}^d -M_{ji} z_{2j}^0 + \sum_{j=1}^n L_{ij} x_{2j} \quad \forall i \in [n]\label{pep1y2}\\
     &\quad \v{z}_{1}^{0}, \v{z}_2^0, \v{z}_1^1, \v{z}_2^1 \in \mathcal{H}^{d}\\
     &\quad \v{x}_1, \v{x}_2, \v{y}_{1}, \v{y}_{2} \in \mathcal{H}^{n}\\
     &\quad A_i \in \mathcal{Q}_i \quad \forall i \in [n].\label{pepclass}
    \end{align}
\end{subequations}

Our first step in the reformulation of \eqref{pep1} is to modify the resolvent evaluation constraints \eqref{pep1x1} and \eqref{pep1x2}. The resolvent calculations \eqref{pep1x1} and \eqref{pep1x2} can be written as constraints requiring that certain points are in graphs of the operators $A_{i}$. In general, a set of points is said to be \emph{interpolable} by a class of operators if there is an operator in the class which has the points in its graph. Proposition 2.4 in \cite{ryu2020operator} gives that the points $\{(x_1, q_1), (x_2, q_2)\}$ are interpolable by the class of $\mu$-strongly monotone and $l$-Lipschitz operators if and only if
\begin{align}
    \left\langle q_1 - q_2, x_1-x_2 \right\rangle \geq \mu \norm{x_1 -x_2}^2 \\
    \norm{q_1 - q_2}^2 \leq l^2 \norm{x_1 -x_2}^2.
\end{align}

We can therefore use this result to write \eqref{pep1} as:
\begin{subequations}\label{pep2}
\begin{align}
\max_{\v{z}_1^0, \v{z}_2^0, \v{z}_1^1, \v{z}_2^1, \v{x}_1, \v{x}_2, \v{y}_{1}, \v{y}_{2}} \quad &
\frac{\norm{\v{z}_1^{1} - \v{z}_2^{1}}^2}{\norm{\v{z}_1^{0} - \v{z}_2^{0}}^2} \label{pep2_obj}\\
\mathrm{subject\, to} \quad &\v{z}_1^{1} = \v{z}_1^{0} + \gamma \v{M} \v{x}_1 \label{pep2z1}\\
& \v{z}_2^{1} = \v{z}_2^{0} + \gamma \v{M} \v{x}_2 \label{pep2z2}\\
& y_{1i} = \sum_{j=1}^d -M_{ji} z_{1j}^0 + \sum_{j=1}^n L_{ij} x_{1j} \quad \forall i \in [n]\label{pep2y1}\\
& y_{2i} = \sum_{j=1}^d -M_{ji} z_{2j}^0 + \sum_{j=1}^n L_{ij} x_{2j} \quad \forall i \in [n]\label{pep2y2}\\
& \left\langle x_{1i} - x_{2i}, y_{1i} - y_{2i} \right\rangle \geq \left(1+\mu_i\right)\norm{x_{1i} - x_{2i}}^2 \quad \forall i \in [n]\label{pep2strong}\\
& l_i^2 \norm{x_{1i} - x_{2i}}^2 \geq \norm{y_{1i} - x_{1i} - \left(y_{2i} - x_{2i} \right)}^2 \quad \forall i \in [n]\label{pep2lip}\\
&\quad \v{z}_{1}^{0}, \v{z}_2^0, \v{z}_1^1, \v{z}_2^1 \in \mathcal{H}^{d}\\
&\quad \v{x}_1, \v{x}_2, \v{y}_{1}, \v{y}_{2} \in \mathcal{H}^{n}.
\end{align}
\end{subequations}

Letting $\v{z} = \v{z}^{0}_{1} - \v{z}^{0}_{2}$, $\v{x} = \v{x}_{1} - \v{x}_{2}$, and $\v{y} = \v{y}_{1} - \v{y}_{2}$, we have: 
\begin{subequations}\label{pep3}
\begin{align}
\max_{\v{z} \in \HH^d, \v{x},\v{y} \in \HH^n} \quad & \frac{\norm{\v{z} + \gamma \v{M} \v{x}}^2}{\norm{\v{z}}^{2}} \label{pep3_obj}\\
\mathrm{subject\, to} \quad & y_{i} = \sum_{j=1}^d -M_{ji} z_j + \sum_{j=1}^n L_{ij} x_{j} \quad \forall i \in [n]\label{pep3y}\\
& \left\langle x_i, y_i \right\rangle \geq \left(1+\mu_i\right)\norm{x_i}^2 \quad \forall i \in [n]\label{pep3strong}\\
& l_i^2 \norm{x_i}^2 \geq \norm{y_i - x_i}^2 \quad \forall i \in [n] \label{pep3lip}
\end{align}
\end{subequations}

Perform the change of variables $\v{z} \to \v{z}/\|\v{z}\|$, $\v{x} \to \v{x}/\|\v{z}\|$, and $\v{y} \to \v{y}/\|\v{z}\|$, and substituting out $y$, this further reduces to:

\begin{subequations}\label{pep4}
\begin{align}
\max_{\v{z} \in \HH^d, \v{x} \in \HH^n} \quad & \norm{\v{z} + \gamma \v{M} \v{x}}^2 \label{pep4_obj}\\
\mathrm{subject\, to} \quad &\norm{\v{z}}^2 = 1 \\
& \left\langle x_i, \sum_{j=1}^d -M_{ji} z_j + \sum_{j=1}^n L_{ij} x_j \right\rangle \geq \left(1+\mu_i\right)\norm{x_i}^2 \quad \forall i \in [n]\label{pep4strong}\\
& l_i^2 \norm{x_i}^2 \geq \norm{\sum_{j=1}^d -M_{ji} z_j + \sum_{j=1}^n L_{ij} x_j - x_i}^2 \quad \forall i \in [n]. \label{pep4lip} 
    \end{align}
\end{subequations}

We then form the Grammian matrix $G \in \Sp^{d+n}$, where
\begin{equation}\label{grammian_def_d}
    G = \begin{bmatrix}
        \norm{z_1}^2 & \langle z_1, z_2 \rangle & \dots & \langle z_1, z_d \rangle & \langle z_1, x_1 \rangle & \langle z_1, x_2 \rangle & \dots & \langle z_1, x_n \rangle \\
        \langle z_1, z_2 \rangle & \norm{z_2}^2 & \dots & \langle z_2, z_d \rangle & \langle z_2, x_1 \rangle & \langle z_2, x_2 \rangle & \dots & \langle z_2, x_n \rangle \\
        \vdots                   &              &       &                          &        \vdots            &                          &       & \vdots \\
        \langle z_1, z_d \rangle & \langle z_2, z_d \rangle  & \dots & \norm{z_d}^2 & \langle z_d, x_1 \rangle & \langle z_d, x_2 \rangle & \dots & \langle z_d, x_n \rangle \\
        \langle z_1, x_1 \rangle & \langle z_2, x_1 \rangle  & \dots & \langle z_d, x_1 \rangle & \norm{x_1}^2 & \langle x_1, x_2 \rangle & \dots & \langle x_1, x_n \rangle \\
        \vdots                   &              &       &                          &       \vdots             &                          &       & \vdots \\
        \langle z_1, x_n \rangle & \langle z_2, x_n \rangle  & \dots & \langle z_d, x_n \rangle & \langle x_1, x_n \rangle & \langle x_2, x_n \rangle & \dots & \norm{x_n}^2
    \end{bmatrix}.
\end{equation}
In what follows, we require $G \succeq 0$. We note that a straightforward extension of \cite[Lemma 3.1]{ryu2020operator} to $n+d$ dimensions gives that, when $\mathrm{dim}(\HH) \geq n+d$, every $G \in \Sp^{n+d}$ is of the form \eqref{grammian_def_d} for some $\v{z}$ and $\v{x}$, and every $G$ of the form \eqref{grammian_def_d} is PSD. It follows that in the sequel when we relax $G$ from the form \eqref{grammian_def_d} to $G \in \Sp^{n+d}$, the relaxation is tight when $\mathrm{dim}(\HH) \geq n+d$. 

For $i \in [n]$, define block matrices $K_O, K_I, K_{\mu_i}, K_{l_i} \in \mathbb{S}^{d+n}$ as follows:
\begin{align}
    K_O &= \begin{bmatrix} \I & \gamma M \\ \gamma M^T & \gamma^2 M^T M \end{bmatrix}\\
K_I &= \begin{bmatrix}
        \I & 0 \\
        0 & 0
    \end{bmatrix} \\
K_{\mu_i} &= \begin{bmatrix}
0 & -\frac{1}{2} M_{\cdot i} e_{i}^{T} \\
-\frac{1}{2} e_{i} M_{\cdot i}^{T} & \frac{1}{2} \left( e_{i} L_{i \cdot} + L_{i \cdot}^{T} e_{i}^{T}\right) - (1 + \mu_{i}) e_{i} e_{i}^{T}
\end{bmatrix}\\
K_{l_i} &= \begin{bmatrix}
-M_{\cdot i}M_{\cdot i }^T & M_{\cdot i}\left(L_{i\cdot} - e_i\right) \\
\left(L_{i\cdot} - e_i\right)^T M_{\cdot i}^T & \quad -\left(L_{i\cdot} - e_i\right)^T\left(L_{i\cdot} - e_i\right) + e_i e_i^T l_i^2
        \end{bmatrix},
\end{align}

We then have the following convex program in $G$, which is equivalent to \eqref{pep4} when $\mathrm{dim}(\mathcal{H}) \geq n+d$ and otherwise is a relaxation,
\begin{subequations}\label{pep5}
    \begin{align}
    \max_{G \in \Sp^{d+n}} \quad & \tr\left(K_O G\right) \label{pep5_obj}\\
    \mathrm{subject\, to} \quad &\tr\left(K_{\mu_i} G\right) \geq 0 \quad \forall i \in [n]\label{pep5mu}\\
    & \tr\left(K_{l_i} G\right) \geq 0 \quad \forall i\in [n] \label{pep5L}\\
    & \tr\left(K_{I} G\right) = 1. \label{pep5I}
    \end{align}
\end{subequations}
Define $S: \mathbb{R}^{n} \times \mathbb{R}^{n} \times \mathbb{R}: \to \mathbb{S}^{d+n}$ as
$$S(\phi, \lambda, \psi) = - K_O - \sum_i \phi_{i} K_{\mu_i} - \sum_i \lambda_{i}K_{l_i} + \psi K_I.$$
For fixed $\gamma$, the dual for problem \eqref{pep5} is:
\begin{subequations}\label{pep6}
    \begin{align}
    \min_{\phi, \lambda, \psi} \quad & \psi \label{pep6_obj}\\
    \mathrm{subject\, to} \quad & S\left(\phi, \lambda, \psi \right) \succeq 0  \label{pep6S}\\
 \quad &\phi \geq 0,\; \lambda \geq 0\\
  \quad & \phi, \lambda \in \mathbb{R}^{n}\\
  \quad & \psi \in \mathbb{R}.
    \end{align}
\end{subequations}
Note that $S\left(\phi, \lambda, \psi \right)$ is the Schur complement of $\tilde{S}(\phi, \lambda, \psi)$, where $\tilde{S}: \mathbb{R}^{n} \times \mathbb{R}^{n} \times \mathbb{R} \to \mathbb{S}^{d + n + d}$ is defined as
\begin{align}
\tilde{S}(\phi, \lambda, \psi) &= \begin{bmatrix}
    - \sum_i \phi_{i} K_{\mu_i} - \sum_i \lambda_{i}K_{l_i} + \psi K_I& \begin{bmatrix}
        \I \\
        \gamma M^T
    \end{bmatrix} \\
    \begin{bmatrix}
        \I & \gamma M
    \end{bmatrix} & \I
\end{bmatrix}.
\end{align}
For fixed $\gamma$, the following SDP is therefore the dual of \eqref{pep5}
\begin{subequations}\label{pep7}
    \begin{align}
    \min_{\phi, \lambda, \psi} \quad & \psi \label{pep7_obj}\\
    \mathrm{subject\, to} \quad & \tilde{S}\left(\phi, \lambda, \psi \right) \succeq 0  \label{pep7S}\\
 \quad &\phi \geq 0,\; \lambda \geq 0\\
  \quad & \phi, \lambda \in \mathbb{R}^{n}\\
  \quad & \psi \in \mathbb{R}.
    \end{align}
\end{subequations}
We next show that problems \eqref{pep7} and \eqref{pep5} are strongly dual to one another by demonstrating Slater's condition \cite{rockafellar1974conjugate} holds. For each $i \in [n]$, select $\varepsilon_{i} > 0$ such that the set of $\mu_{i} + \varepsilon_{i}$ strongly monotone and $l_{i} - \varepsilon_{i}$ Lipschitz operators is nonempty. Choose operators $(A_{i})_{i=1}^{n}$ from each of these sets. Let $\v{z}_{1}^{0}, \v{z}_{2}^{0} \in \mathcal{H}^{d}$ such that $\v{z}_{1}^{0} \neq \v{z}_{2}^{0}$. Run the algorithm for a single iteration with the provided $M \in \mathbb{R}^{d \times n}$, $L \in \mathbb{R}^{n \times n}$, and $\gamma > 0$, and construct $G$ from \eqref{grammian_def_d}, where $\v{z}$ and $\v{x}$ are constructed according to the transformations preceding \eqref{grammian_def_d}. This matrix $G$ is feasible in \eqref{pep5} by construction, and the inequalities \eqref{pep5mu} and \eqref{pep5L} are loose because of the strong monotonicity and Lipschitz constants of the operators $(A_{i})_{i=1}^{n}$. $G$ may not be positive definite, but if it is not then there is a $\delta > 0$ such that 
$$G^{*} = (1- \delta)G  + \frac{\delta}{n} \I$$
is positive definite, feasible with respect to \eqref{pep5I}, and loose with respect to \eqref{pep5mu} and \eqref{pep5L}. This $G^{*}$ is therefore in the relative interior of the feasible set of \eqref{pep5}, so that Slater's condition is satisfied.

Finally, we treat the problem \eqref{pep7} with $\gamma$ as a decision variable, extending the definition of $\tilde{S}$ to treat $\gamma$ as an additional argument. This yields the following, which optimizes the contraction factor bound in problem \eqref{pep5} over $\gamma$,
\begin{subequations}
    \begin{align}
    \min_{\phi, \lambda, \psi, \gamma} \quad & \psi\\
\mathrm{subject\, to} \quad & \tilde{S}\left( \phi, \lambda, \psi, \gamma \right) \succeq 0             \\
& \phi \geq 0, \; \lambda \geq 0\\
& \phi, \lambda \in \mathbb{R}^{n}\\
& \psi, \gamma \in \mathbb{R}.
\end{align}
\end{subequations}

\end{proof}

We note that $\gamma > 0$ is required in the proof of Theorem \ref{main_theorem} for fixed points of \eqref{d_iteration} to yield a solution of \eqref{zero_in_monotone}. One can enforce this by adding the constraint $\gamma \geq c$ to \eqref{m_dual_thm} for some small $c > 0$.

\subsection{Optimal Matrix Selection}\label{Sec:WPEP}

We now turn to iteration \eqref{n_iteration}. Given a feasible matrix $Z$ in \eqref{main_prob}, we provide a method for finding $\gamma$ and/or $W$ which have the optimal contraction factor $\tau$. This optimal contraction factor is valid over a set of operators defining \eqref{zero_in_monotone} which have certain structural properties that can be characterized using the PEP framework \cite{drori2014performance}.

For fixed $L \in \mathbb{R}^{n \times n}$, we define $K_I$, $K_{\mathbbm{1}}$ and (for $i \in [n]$) $K_{\mu_i}$ and $K_{l_i}$, of each which is in $\mathbb{S}^{2n}$, as:
\begin{align}
    K_I &= \begin{bmatrix}
        \I & 0 \\
        0 & 0
    \end{bmatrix}\\
    K_{\mathbbm{1}} &= \begin{bmatrix}
      \mathbbm{1} \mathbbm{1}^{T} & 0 \\
      0 & 0
    \end{bmatrix}\\
        K_{\mu_i} &= \begin{bmatrix}
    0 & \frac{1}{2} e_{i} e_{i}^{T} \\
    \frac{1}{2} e_{i} e_{i}^{T} & \frac{1}{2} \left( e_{i} L_{i \cdot} + L_{i \cdot}^{T} e_{i}^{T}\right) - \left(1 + \mu_{i}\right) e_{i} e_{i}^{T}
        \end{bmatrix}\\
    K_{l_i} &= \begin{bmatrix}
    -e_i e_i^T & -e_i\left(L_{i\cdot} - e_i\right) \\
    -\left(L_{i\cdot} - e_i\right)^T e_i^T & \quad -\left(L_{i\cdot} - e_i\right)^T\left(L_{i\cdot} - e_i\right) + e_i e_i^T l_i^2
            \end{bmatrix}
\end{align}
We also define a function $\tilde{S}: \mathbb{R}^{n} \times \mathbb{R}^{n} \times \mathbb{R} \times \mathbb{R} \times \mathbb{S}^{n}_{+} \to \mathbb{S}^{3n}$ as
\begin{align}
    \tilde{S}(\phi, \lambda, \psi, \omega, \tilde{W}) &= \begin{bmatrix}
        - \sum_i \phi_{i} K_{\mu_i} - \sum_i \lambda_{i} K_{l_i} + \psi K_I + \omega K_{\mathbbm{1}} & \begin{bmatrix}
            \I \\
            -\tilde{W}
        \end{bmatrix} \\
        \begin{bmatrix}
            \I & -\tilde{W}^T
        \end{bmatrix} & \I
    \end{bmatrix}.
\end{align}

\begin{theoremrep}\label{w_dual_thm}
Let $L \in \mathbb{R}^{n\times n}$, $c > 0$, and $(\mu_{i})_{i=1}^{n}$ and $(l_{i})_{i=1}^{n}$ satisfy $0 \leq \mu_{i} < l_{i}$ for all $i \in [n]$. Consider the problem
\begin{subequations}\label{pepw_thm}
    \begin{align}
    \min_{\phi, \lambda, \psi, \omega, \tilde{W}} \quad & \psi \label{pepw_obj}\\
\mathrm{subject\, to} \quad & \tilde{S}\left( \phi, \lambda, \psi, \omega, \tilde{W} \right) \succeq 0 \label{pepw_S}\\
& \phi \geq 0, \; \lambda \geq 0\label{pepw_lam}\\
& \tilde{W} \1 = 0 \label{null_constraint}\\
& \lambda_{1}(\tilde{W}) + \lambda_{2}(\tilde{W}) \geq c \label{fiedler_constraint}\\
& \phi, \lambda \in \mathbb{R}^{n}\\
& \psi, \omega \in \mathbb{R}\\
& \tilde{W} \in \Sp^n.
\end{align}
\end{subequations}
\begin{enumerate}[(i)]
\item In algorithm \eqref{n_iteration}, if the values of $\gamma$ and $W$ are provided, then fixing $\tilde{W} = \gamma W$ and optimizing over the remaining variables in \eqref{pepw_obj} provides an optimal value of \eqref{pepw_thm} which upper bounds the worst-case contraction factor
$$\tau = \frac{\|\v{v}_{1}^{1} - \v{v}_{1}^{2}\|^{2}}{\|\v{v}_{1}^{1} - \v{v}_{1}^{2}\|^{2}}$$
of \eqref{n_iteration} over all initial values $\v{v}_{1}^{0}$ and $\v{v}_{2}^{0}$ and all possible $\mu_{i}$-strongly monotone $\l_{i}$-Lipschitz operators $(A_{i})_{i\in[n]}$. When $\mathrm{dim}(\mathcal{H}) \geq 2n$, this bound is tight. \label{thm_n_pt1}
\item If $\tilde{W}$ is optimized over, along with the remaining variables in \eqref{pepw_thm}, then setting $\gamma W = \tilde{W}$ in \eqref{n_iteration} is the choice of $\gamma W$ which minimizes the upper bound on the worst-case contraction factor in \eqref{thm_n_pt1} over all matrices $\tilde{W} \in \mathbb{S}_{+}^{n}$ satisfying \eqref{null_constraint} and \eqref{fiedler_constraint}.
\item If $W$ is fixed but $\gamma$ is to be determined, then substituting $\tilde{W} = \gamma W$ in \eqref{pepw_thm} and optimizing over $\gamma$ along with the remaining variables in \eqref{pepw_thm} results in a choice of $\gamma$ which minimizes the upper bound on the worst-case contraction factor in \eqref{thm_n_pt1}.  
\end{enumerate}
\end{theoremrep}
\begin{proof}
Denote by $\mathcal{Q}_{i}$ the set of $\mu_{i}$-strongly monotone and $l_{i}$ Lipschitz operators. Since we arrive at $\v{v} \in \HH^n$ by the change of variable $\v{v} = -\v{M}^T \v{z}$, $\v{v} \in \mathrm{range}(\v{M}^T)$ which gives $\sum_{i=1}^{n} v_{i} = 0$. 
We denote by $\tau = \frac{\norm{\v{v}_1^{1} - \v{v}_2^{1}}^2}{\norm{\v{v}_1^{0} - \v{v}_2^{0}}^2}$. Given the algorithm definition and our performance metric, our worst case $\tau$ is provided by the PEP formulation below:
\begin{subequations}\label{pepw1}
\begin{align}
\max_{\v{v}_1^0, \v{v}_2^0, \v{v}_1^1, \v{v}_2^1, \v{x}_1, \v{x}_2, \v{y}_{1}, \v{y}_{2}, A_{1}, \dots, A_{n}} \quad & \frac{\norm{\v{v}_1^{1} - \v{v}_2^{1}}^2}{\norm{\v{v}_1^{0} - \v{v}_2^{0}}^2} \label{pepw1_obj}\\
\mathrm{subject\, to}\quad & \v{v}_1^{1} = \v{v}_1^{0} - \gamma \v{W} \v{x}_1 \label{pepw1v1}\\
& \v{v}_2^{1} = \v{v}_2^{0} - \gamma \v{W} \v{x}_2 \label{pepw1v2}\\
& x_{1i} = J_{A_i}(y_{1i}) \quad \forall i \in [n]\label{pepw1x1}\\
& y_{1i} = v_{1i}^0 + \sum_{j=1}^n L_{ij} x_{1j} \quad \forall i \in [n]\label{pepw1y1}\\
& x_{2i} = J_{A_i}(y_{2i}) \quad \forall i \in [n]\label{pepw1x2}\\
& y_{2i} = v_{2i}^0 + \sum_{j=1}^n L_{ij} x_{2j} \quad \forall i \in [n]\label{pepw1y2}\\
& \sum_{i=1}^{n}v_{1i}^0 = 0 \label{pepw1v1n}\\
& \sum_{i=1}^{n}v_{2i}^0 = 0 \label{pepw1v2n}\\
& \v{v}_2^0, \v{v}_1^1, \v{v}_2^1, \v{x}_1, \v{x}_2, \v{y}_{1}, \v{y}_{2} \in \mathcal{H}^{n}\\
& A_{i} \in \mathcal{Q}_{i} \quad \forall i \in [n]
\end{align}
\end{subequations}

Similar to the proof of Theorem \ref{m_dual_thm}, we fix $\|\v{v}_{1}^{0} - \v{v}_{2}^{0}\|^{2} = 1$ and rewrite the resolvents using the interpolability constraints for strongly monotone Lipschitz operators.
\begin{subequations}\label{pepw2}
    \begin{align}
    \max_{\v{v}_1^0, \v{v}_2^0, \v{v}_1^1, \v{v}_2^1, \v{x}_1, \v{x}_2, \v{y}_{1}, \v{y}_{2} \in \HH^n} \quad & \norm{\v{v}_1^{1} - \v{v}_2^{1}}^2 \label{pepw2_obj}\\
    \mathrm{subject\, to}\quad & \v{v}_1^{1} = \v{v}_1^{0} - \gamma \v{W} \v{x}_1 \label{pepw2v1}\\
    & \v{v}_2^{1} = \v{v}_2^{0} - \gamma \v{W} \v{x}_2 \label{pepw2v2}\\
    & y_{1i} = v_{1i}^0 + \sum_{j=1}^n L_{ij} x_{1j} \quad \forall i\in[n]\label{pepw2y1}\\
    & y_{2i} = v_{2i}^0 + \sum_{j=1}^n L_{ij} x_{2j} \quad \forall i\in[n]\label{pepw2y2}\\
    & \norm{\v{v}_1^0 - \v{v}_2^0}^2 = 1 \label{pepw2equal}\\
    & \sum_{i=1}^{n}v_{1i}^0 = 0 \label{pepw2v1n}\\
    & \sum_{i=1}^{n}v_{2i}^0 = 0 \label{pepw2v2n}\\
    & \left\langle x_{1i} - x_{2i}, y_{1i} - y_{2i} \right\rangle \geq \left(1+\mu_i\right)\norm{x_{1i} - x_{2i}}^2 \quad \forall i\in [n]\label{pepw2strong}\\
    & l_i^2 \norm{x_{1i} - x_{2i}}^2 \geq \norm{y_{1i} - x_{1i} - \left(y_{2i} - x_{2i} \right)}^2 \quad \forall i\in [n]. \label{pepw2lip} 
    \end{align}
    \end{subequations}
Letting $v_i = v_{1i}^0 - v_{2i}^0$, $x_i = x_{1i} - x_{2i}$, and $y_i = y_{1i} - y_{2i} = v_i + \sum_{j=1}^n L_{ij} x_j$, and substituting to remove $y$, we get the following,
\begin{subequations}\label{pepw4}
    \begin{align}
    \max_{\v{v}, \v{x} \in \HH^n} \quad & \norm{\v{v} - \gamma \v{W} \v{x}}^2 \label{pepw4_obj}\\
    \mathrm{subject\, to} \quad & \norm{\v{v}}^2 = 1 \\
& \left\langle x_i, v_i + \sum_{j=1}^n L_{ij} x_j \right\rangle \geq \left(1+\mu_i\right)\norm{x_i}^2 \quad \forall i \in [n]\label{pepw4strong}\\
& l_i^2 \norm{x_i}^2 \geq \norm{v_i  + \sum_{j=1}^n L_{ij} x_j - x_i}^2 \quad \forall i \in [n] \label{pepw4lip} \\ 
& \sum_{i=1}^{n} v_{i} = 0. \label{pepw4v1n}
    \end{align}
\end{subequations}

Though problem \eqref{pepw4} initially appears to be a relaxation of \eqref{pepw2} because of its elimination of variables and constraints, the problems have the same optimal value. Any feasible point $(\v{v}, \v{x})$ of \eqref{pepw4} yields a feasible point $(\v{v}^{0}_{1}, \v{v}^{0}_{2}, \v{x}^{0}_{1}, \v{x}^{0}_{2})$ of \eqref{pepw2} with the same objective if one takes
\begin{align}
\v{v}_{1}^{0} &= \v{v} \quad \quad \v{v}_{2}^{0} = 0 \\
\v{x}_{1}^{0} &= \v{x} \quad \quad \v{x}_{2}^{0} = 0.
\end{align}

We form the Grammian matrix $G \in \R^{2n \times 2n}$, where
\begin{equation} \label{grammian_def}
    G = \begin{bmatrix}
        \norm{v_1}^2 & \langle v_1, v_2 \rangle & \dots & \langle v_1, v_n \rangle & \langle v_1, x_1 \rangle & \langle v_1, x_2 \rangle & \dots & \langle v_1, x_n \rangle \\
        \langle v_1, v_2 \rangle & \norm{v_2}^2 & \dots & \langle v_2, v_n \rangle & \langle v_2, x_1 \rangle & \langle v_2, x_2 \rangle & \dots & \langle v_2, x_n \rangle \\
        \vdots                   &              &       &                          &        \vdots            &                          &       & \vdots \\
        \langle v_1, v_n \rangle & \langle v_2, v_n \rangle  & \dots & \norm{v_n}^2 & \langle v_n, x_1 \rangle & \langle v_n, x_2 \rangle & \dots & \langle v_n, x_n \rangle \\
        \langle v_1, x_1 \rangle & \langle v_2, x_1 \rangle  & \dots & \langle v_n, x_1 \rangle & \norm{x_1}^2 & \langle x_1, x_2 \rangle & \dots & \langle x_1, x_n \rangle \\
        \vdots                   &              &       &                          &       \vdots             &                          &       & \vdots \\
        \langle v_1, x_n \rangle & \langle v_2, x_n \rangle  & \dots & \langle v_n, x_n \rangle & \langle x_1, x_n \rangle & \langle x_2, x_n \rangle & \dots & \norm{x_n}^2
    \end{bmatrix}.
\end{equation}
We note that a straightforward extension of \cite[Lemma 3.1]{ryu2020operator} to $2n$ dimensions gives that, when $\mathrm{dim}(\HH^{n}) \geq 2n$, every $G \in \mathbb{S}^{2n}$ is of the form \eqref{grammian_def}. It follows that in the sequel when we relax $G$ from the form \eqref{grammian_def} to $G \in \mathbb{S}^{2n}$, the relaxation is tight when $\mathrm{dim}(\HH) \geq 2n$.

We next define $K_I$, $K_O$, $K_{\mathbbm{1}}$ and (for $i \in [n]$) $K_{\mu_i}$ and $K_{l_i}$ as: 
\begin{align}
    K_I &= \begin{bmatrix}
        \I & 0 \\
        0 & 0
    \end{bmatrix}\\
    K_O &= \begin{bmatrix}
        \I & -\gamma W^T \\
        -\gamma W & \gamma^2 W W^T
    \end{bmatrix} = \begin{bmatrix}
        \I \\ -\gamma W 
    \end{bmatrix} \begin{bmatrix}
        \I &
        -\gamma W^T
    \end{bmatrix}\\
    K_{\mathbbm{1}} &= \begin{bmatrix}
      \mathbbm{1} \mathbbm{1}^{T} & 0 \\
      0 & 0
    \end{bmatrix}\\
        K_{\mu_i} &= \begin{bmatrix}
    0 & \frac{1}{2} e_{i} e_{i}^{T} \\
    \frac{1}{2} e_{i} e_{i}^{T} & \frac{1}{2} \left( e_{i} L_{i \cdot} + L_{i \cdot}^{T} e_{i}^{T}\right) - \left(1 + \mu_{i}\right) e_{i} e_{i}^{T}
        \end{bmatrix}\\
    K_{l_i} &= \begin{bmatrix}
    -e_i e_i^T & -e_i\left(L_{i\cdot} - e_i\right) \\
    -\left(L_{i\cdot} - e_i\right)^T e_i^T & \quad -\left(L_{i\cdot} - e_i\right)^T\left(L_{i\cdot} - e_i\right) + e_i e_i^T l_i^2
            \end{bmatrix}.
\end{align}
We then have equivalence between \eqref{pepw2} and the following program when $\mathrm{dim}(\HH) \geq 2n$, and a relaxation (which still provide a valid, if not necessarily tight, bound) when $\mathrm{dim}(\HH) < 2n$:
\setcounter{MaxMatrixCols}{20}
\begin{subequations}\label{pepw5}
    \begin{align}
    \max_{G \in \Sp^{2n}} \quad & \tr\left(K_O G\right) \label{pepw5_obj}\\
    \mathrm{subject\, to} \quad &\tr\left(K_{\mu_i} G\right) \geq 0 \quad \forall i \in [n]\label{pepw5mu}\\
& \tr\left(K_{l_i} G\right) \geq 0 \quad \forall i \in [n]\label{pepw5L}\\
& \tr\left(K_{I} G\right) = 1 \label{pepw5I}\\
& \tr\left(K_{\mathbbm{1}} G \right) = 0 \label{pepw51}.
    \end{align}
\end{subequations}

We next form the dual PEP of problem \eqref{pepw5}. The dual of SDP \eqref{pepw6} is
\begin{subequations}\label{pepw6}
    \begin{align}
    \min_{\phi, \lambda, \psi, \omega} \quad & \psi \label{pepw6_obj}\\
    \mathrm{subject\, to} \quad & S(\phi, \lambda, \psi, \omega) \succeq 0 \label{pepw6S}\\
    &\phi \geq 0, \; \lambda \geq 0\\
    & \phi, \lambda \in \mathbb{R}^{n}\\
    &\psi, \omega \in \mathbb{R}. 
    \end{align}
\end{subequations}
where $S(\phi, \lambda, \psi, \omega) = -K_O - \sum_{i=1}^{n} \phi_{i} K_{\mu_i} - \sum_{i=1}^{n}  \lambda_{i} K_{l_i} + \psi K_I + \omega K_{\mathbbm{1}}$.
We note that given the definition of $K_O$ above, $S$ is the Schur complement of $\tilde{S} \in \Sp^{3n}$, where $\tilde{S}$ is defined as:
\begin{equation}\label{tildeS}
    \tilde{S}(\phi, \lambda, \psi, \omega) = \begin{bmatrix}
        - \sum_i \phi_{i} K_{\mu_i} - \sum_i \lambda_{i} K_{l_i} + \psi K_I + \omega K_{\mathbbm{1}} & \begin{bmatrix}
            \I \\
            -\gamma W
        \end{bmatrix} \\
        \begin{bmatrix}
            \I & -\gamma W^T
        \end{bmatrix} & \I
    \end{bmatrix}.
\end{equation}
For fixed $\gamma W$, the following SDP is therefore the dual of \eqref{pepw1}
\begin{subequations}\label{pepw7}
    \begin{align}
    \min_{\phi, \lambda, \psi, \omega} \quad & \psi \label{pepw7_obj}\\
    \mathrm{subject\, to} \quad & \tilde{S}(\phi, \lambda, \psi, \omega) \succeq 0 \label{pepw7S}\\
    &\phi \geq 0, \; \lambda \geq 0\\
    &\phi, \lambda \in \mathbb{R}^{n}\\
    &\psi, \omega \in \mathbb{R}. 
    \end{align}
\end{subequations}

Strong duality holds between the primal problem \eqref{pepw5} and its dual \eqref{pepw7}. To demonstrate this, we show that Slater's constraint qualification \cite{rockafellar1974conjugate} holds. Since $\mu_{i} < l_{i}$ for each $i \in [n]$, there is an $\varepsilon_{i} > 0$ such that the set of $\mu_{i} + \varepsilon_{i}$-strongly monotone and $l_{i} - \varepsilon_{i}$ Lipschitz operators is nonempty. Choose $A_{i}$ as such an operator for each $i \in [n]$, and choose $\v{v}_{1}^{0}, \v{v}_{2}^{0} \in \mathrm{range}(\v{M}^{T})$  such that $\v{v}_{1}^{0} \neq \v{v}_{2}^{0}$. Run the algorithm \eqref{n_iteration}, constructing the variables $\v{v}$ and $\v{x}$ according to \eqref{pepw7} and the matrix $G$ as in \eqref{grammian_def}. We claim that there is a $\delta > 0$ such that for $(\1 0) \in \R^{2n}$, which is a length $2n$ vectors with $n$ ones followed by $n$ zeroes,
$$G^{*} = (1- \delta) G + \frac{\delta}{n-1}\left(\I-  \frac{1}{n} \begin{pmatrix} \mathbbm{1} \\ 0 \end{pmatrix} \begin{pmatrix} \mathbbm{1}^{T} & 0 \end{pmatrix}  \right)$$
is in the relative interior of \eqref{pepw5}'s feasible region, which we denote by $\mathcal{S}$. By our choice of the $(A_{i})_{i=1}^{n}$ operators, inequalities \eqref{pepw5mu} and \eqref{pepw5L} are loose for $G$ and therefore for $G^{*}$ for small enough $\delta$. Both equality constraints \eqref{pepw5I} and \eqref{pepw51} are satisfied by $G^{*}$. Though $G^{*}$ is positive semidefinite, arguments about the interior of the positive semidefinite cone warrant careful consideration. The constraint \eqref{pepw51} gives that any feasible $G$ is on the boundary of the PSD cone, but we show next that $G^{*}$ is in the relative interior of $\mathcal{S}$. To show that $G^{*} \in \mathrm{relint}(\mathcal{S})$, it suffices to show that there is a $\nu > 0$ such that $G^{*} + M \in \mathcal{S}$ for all $M \in \mathbb{S}^{2n}$ satisfying $\| M \| < \nu$,  $\mathrm{tr}(K_{\mathbbm{1}} M) = 0$, and $\mathrm{tr}(K_{I} M) = 0$. The cyclic property of trace gives that for such an $M$,
$$\mathrm{tr}(K_{\mathbbm{1}} M) = 0 \Rightarrow \begin{pmatrix} \mathbbm{1}^{T} & 0 \end{pmatrix} M \begin{pmatrix} \mathbbm{1} \\ 0 \end{pmatrix} = 0.$$
The spectral theorem then gives that 
\begin{equation}\label{spectral}
M = 0 \begin{pmatrix} \mathbbm{1} \\ 0 \end{pmatrix} \begin{pmatrix} \mathbbm{1} \\ 0 \end{pmatrix}^{T} + \sum_{i=2}^{n} \lambda_{i} u_{i} u_{i}^{T},
\end{equation}
where each of the $u_{i}$ vectors is orthogonal to $\begin{pmatrix} \mathbbm{1} \\ 0 \end{pmatrix}$. We note that the indexing of eigenvalues in \eqref{spectral} does not imply any ordering among them. From this expansion of $M$, we see that $M \begin{pmatrix} \mathbbm{1} \\ 0 \end{pmatrix} = 0$. Finally, to show that $G^{*} + M \in \mathcal{S}$, we note that the equality constraints in \eqref{pepw5} are clearly satisfied and because the inequality constraints are loose for $G^{*}$, they will be loose for $G^{*} + M$ for $\nu$ small enough. To show that $G^{*} + M \succeq 0$, we show that for any $w \in \mathcal{H}^{2n}$, $w^{T}(G^{*} + M) w \geq 0$. For such a $w$, write $w = \alpha \begin{pmatrix} \mathbbm{1} \\ 0 \end{pmatrix} + w_{\perp}$, where $w_{\perp} \perp \begin{pmatrix} \mathbbm{1} \\ 0 \end{pmatrix}$. Then
\begin{align}
w^{T} (G^{*} + M) w = & \alpha^{2} \begin{pmatrix} \mathbbm{1} \\ 0 \end{pmatrix}^{T} \left( (1-\delta) G \right) \begin{pmatrix} \mathbbm{1} \\ 0 \end{pmatrix}\\
& + 2\alpha \begin{pmatrix} \mathbbm{1} \\ 0 \end{pmatrix}^{T} \left((1 - \delta) G \right) w_{\perp} \\
& + w_{\perp}^{T} \left( (1-\delta) G + \frac{\delta}{n-1} \I + M \right) w_{\perp}\\
= & (1-\delta) w^{T} G w + \frac{\delta}{n-1} \|w_{\perp} \|^{2} + w_{\perp}^{T} M w_{\perp}\\
\geq & \|w_{\perp}\|^{2} \left( \frac{\delta}{n-1} + \lambda_{\text{min}}(M)\right).
\end{align}
Therefore there is a $\nu$ small enough such that $G^{*} + M \in \mathcal{S}$. It follows that $G^{*} \in \mathrm{relint}(\mathcal{S})$ and strong duality holds.

Finally, we treat the dual problem \eqref{pepw7} with $\tilde{W}=\gamma W$ as a decision variable. We extend the definition of $\tilde{S}$ to treat $\tilde{W}$ as an argument which yields the following problem. We add the $\tilde{W}\1=0$ and $\lambda_{1}(\tilde{W}) + \lambda_{2}(\tilde{W})$ constraints required in \eqref{main_prob}, noting that these constraints are the properties of $W$ in the proof of Theorem \ref{main_theorem} which guarantee that a fixed point of \eqref{n_iteration} yields a solution of \eqref{zero_in_monotone}.
\begin{subequations}\label{pepw8}
    \begin{align}
    \min_{\phi, \lambda, \psi, \omega, \tilde{W}} \quad & \psi \label{pepw8_obj}\\
    \mathrm{subject\, to} \quad & \tilde{S}(\phi, \lambda, \psi, \omega, \tilde{W}) \succeq 0 \label{pepw8S}\\
    &\phi \geq 0, \; \lambda \geq 0\\
    &\tilde{W} \mathbbm{1} = 0\\
    &\lambda_{1}(\tilde{W}) + \lambda_{2}(\tilde{W}) \geq c\\
    & \phi, \lambda \in \mathbb{R}^{n}\\
    &\psi, \omega \in \mathbb{R}\\
    &\tilde{W} \in \mathbb{S}_{+}^{n}.
    \end{align}
\end{subequations}
\end{proof}

In practice, when optimizing over $\tilde{W}$, we find that when computing the solution of \eqref{pepw_thm} for small values of $c$ the constraint \eqref{fiedler_constraint} is rarely tight, so we recommend solving \eqref{pepw_thm} without \eqref{fiedler_constraint} and only adding it if $\lambda_{2}(\tilde{W})$ is found to be zero (since $\lambda_{1}(\tilde{W}) = 0$ is guaranteed by \eqref{null_constraint}).

\section{Numerical Experiments}\label{Sec:Convergence}

This section presents numerical experiments which utilize and validate our contributions from the previous sections. We conduct three sets of experiments. The first compares the contraction factor of $d$-Block designs, which compromise between a high degree of parallelism and high resolvent connectivity, with a fully connected design and the Malitsky-Tam algorithm introduced in \cite{malitsky2023resolvent}. We conduct this comparison with and without the optimal step size selection in Theorem \ref{w_dual_thm}, while also examining the impact of resolvent evaluation ordering and the choice of $d$ in $d$-Block designs. Additionally, we numerically investigate the ideal amount of sparsity to balance between parallelism and rapid rate of convergence, demonstrating that $d$-Block designs make the most progress towards a solution during a fixed running time. A final experiment compares $d$-Block designs to minimum iteration time designs from Section \ref{Sec:mip_formulations} when the resolvent computation times and communication times are known. Throughout, in order to compare our contributions to previous results \cite{malitsky2023resolvent, tam2023frugal}, which are formulated in terms of iteration \eqref{d_iteration}, we quantify the contraction factor in iteration \eqref{d_iteration} using Theorem \ref{m_dual_thm}. For $d$-Block designs and minimum iteration time designs from Section \ref{Sec:mip_formulations}, we apply the sparse Cholesky decomposition from Section \ref{Sec:MfromW} to construct an $M$ for use in Theorem \ref{m_dual_thm}. For the sake of reproducibility, all experiments from this section can be found in our accompanying software package \href{https://github.com/peterbarkley/oars/}{github.com/peterbarkley/oars/}.

\subsection{Comparing Contraction Factors of Various Designs}

In our first experiment, we characterize the contraction factor $\tau$ from Theorem \ref{m_dual_thm} for some of the design choices presented in Sections \ref{Sec:Examples} and \ref{Sec:Objectives}. We do so for problems in two problem classes: $n$ identical maximal $l$-Lipschitz $\mu$-strongly monotone operators (which we call Class 1) and $n-1$ identical maximal $l$-Lipschitz $\mu$-strongly monotone operators with one unrestricted maximal monotone operator (which we call Class 2).

We begin with a comparison of the spectral objective functions in Section \ref{Sec:Objectives} using a Class 1 problem with $l=2$ and $\mu=1$. We restrict the SDP to the 2-Block design. For comparison, we consider the Malitsky-Tam (MT) \cite{malitsky2023resolvent} and a fully connected design. For the fully connected design, we take the matrix $Z$ to have $2$ on its diagonal and $-\frac{2}{n-1}$ in all entries off the diagonal, and we set $W = Z$. For each design, we take the step size $\gamma = 0.5$.

\begin{figure}[h!]
    \begin{center}
        \begin{subfigure}{.49\textwidth}
\centering
\includegraphics[width=\linewidth]{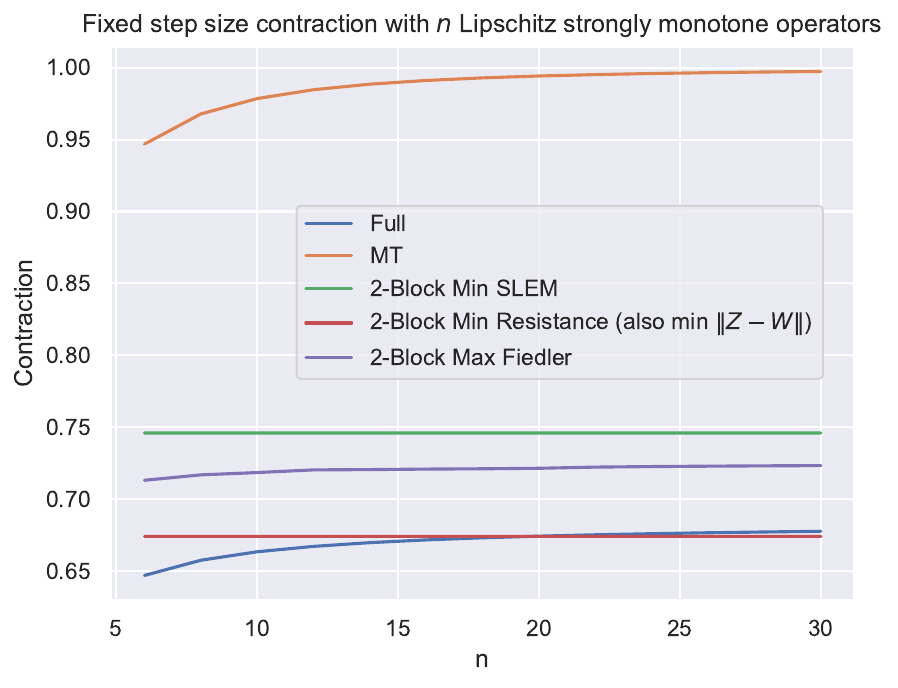}

    \caption{Class 1 problem: All $n$ monotone operators satisfy a $1.0$-strongly monotone and $2.0$-Lipschitz assumption.}
    \label{fig:fixedlipstrong}

\end{subfigure}%
\hspace{.019\textwidth}%
\begin{subfigure}{.49\textwidth}
    \centering
        
\includegraphics[width=\linewidth]{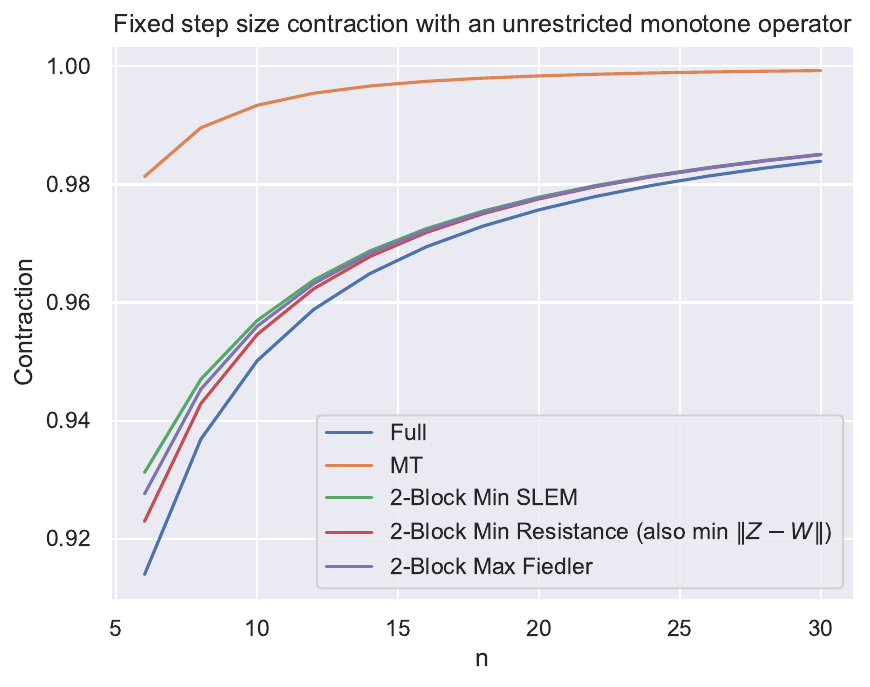}

    \caption{Class 2 problem: $n-1$ monotone operators are $1.0$-strongly monotone and $2.0$-Lipschitz, and one is unconstrained.}
    \label{fig:fixedproj}
\end{subfigure}
\end{center}
\caption{Contraction factors with fixed $\gamma = 0.5$.}
\label{fig:fixed_gamma}
\end{figure}

Figure \ref{fig:fixed_gamma} displays the results. We note first that the maximally dense fully connected design provides the best contraction factor, and the maximally sparse MT design provides the worst. The minimum resistance objective \eqref{resistance_prob} and minimizing $\norm{Z-W}$ return the same matrices and provide the fastest convergence rate among the $2$-Block designs. 

We next conduct a similar test using optimal step sizes. We compare the various objective functions over the 2-Block design with MT and the fully connected design, both for Class 1 (with $l=2$ and $\mu=1$) and for Class 2 with the same parameters. We use the dual PEP formulation in Theorem \ref{m_dual_thm} to calculate the optimal step sizes and contraction factor.

\begin{figure}[h!]
    \begin{center}
        \begin{subfigure}{.49\textwidth}
\centering
\includegraphics[width=\linewidth]{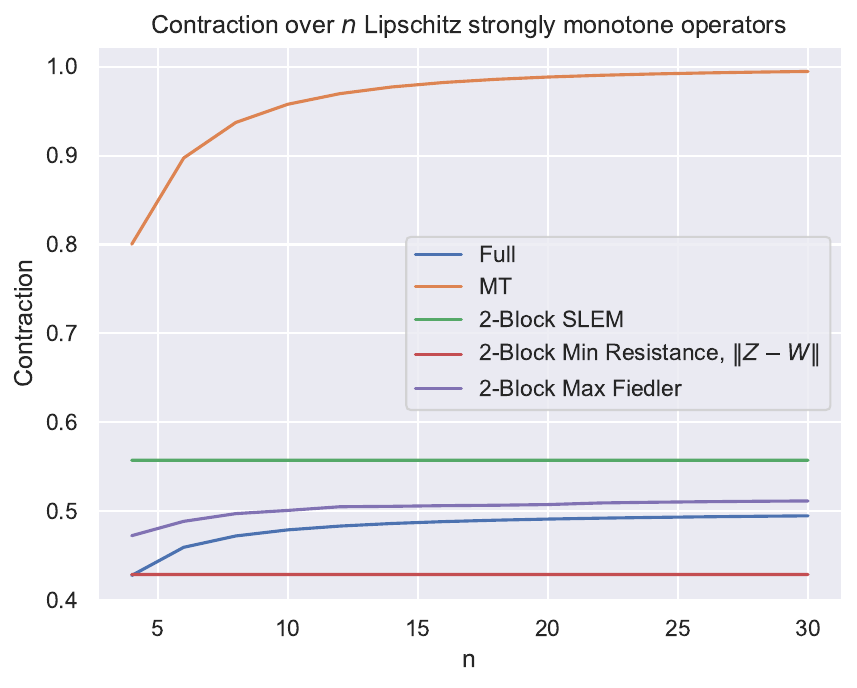}

    \caption{Class 1 problem: All $n$ monotone operators satisfy a $1.0$-strongly monotone and $2.0$-Lipschitz assumption.}
    \label{fig:optlipstrong}

\end{subfigure}%
\hspace{.019\textwidth}%
\begin{subfigure}{.49\textwidth}
    \centering
        
\includegraphics[width=\linewidth]{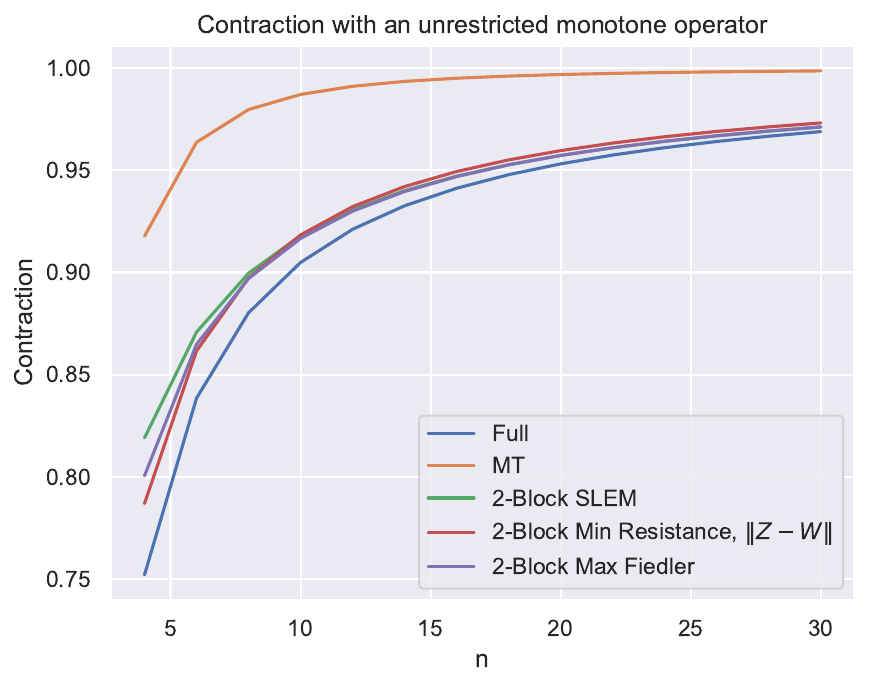}

    \caption{Class 2 problem: $n-1$ monotone operators are $1.0$-strongly monotone and $2.0$-Lipschitz, and one is unconstrained.}
    \label{fig:optproj}
\end{subfigure}
\end{center}
\caption{Contraction factors with optimal $\gamma$ from Theorem \ref{w_dual_thm}.}
\label{fig:opt_gamma}
\end{figure}

Figure \ref{fig:opt_gamma} provides the results. We see a dramatic improvement in convergence rates when using optimal step size in all cases. In the Class 1 case of identical operators, the 2-Block minimum resistance design \eqref{resistance_prob} beats the fully connected algorithm for $n \geq 5$, and the 2-Block maximum Fiedler value algorithm \eqref{fiedler_prob} comes quite close to it as well. Both minimum resistance and minimum SLEM show invariance to the number of operators, $n$, in this scenario.

In Class 2, the invariance disappears, and the number of monotone operators strongly impacts the contraction factor for all designs. The fully connected design provides the best convergence rate, with the spectral objectives approaching the performance of the fully connected design as $n$ increases. Below $n=10$, the minimum resistance design \eqref{resistance_prob} is slightly better than the other spectral designs. Above $n=10$, the maximum Fiedler value \eqref{fiedler_prob} and minimum SLEM designs \eqref{slem_prob} begin to outperform minimum resistance by a small margin.

The matrices returned by the spectral objectives, whether restricted to a $d$-Block design or not, exhibit a high degree of symmetry. Tam's extension of the Ryu Algorithm \cite{tam2023frugal}, however, has a strong lack of symmetry, concentrating significant value in $W$ onto the last resolvent. We therefore examine the impact of operator ordering on the MT, full, and block designs, and Tam's extension of the Ryu algorithm. In this test, we use the optimal step size for each algorithm design, and shift an unrestricted monotone operator between the last position and the first position. The other $n-1$ operators are 2-Lipschitz 1-strongly monotone as before.

\begin{figure}[h!]
    \centering
        
\includegraphics[width=.7\textwidth]{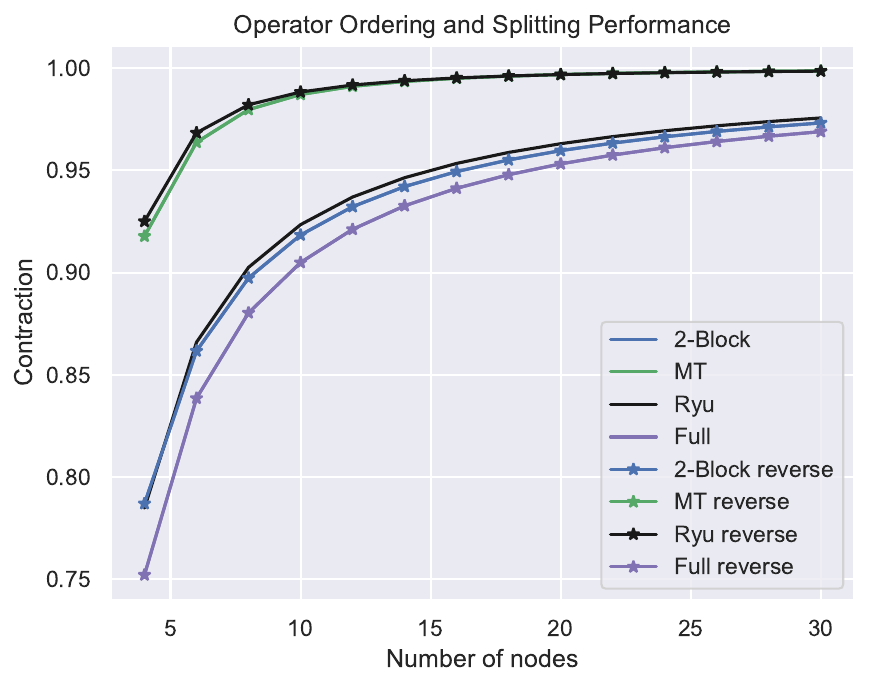}

    \caption{Impact of unrestricted operator placement on convergence rate in Class 2 problems. All designs are invariant to swapping the role of $A_{1}$ and $A_{n}$ except the extended Ryu algorithm, which has degraded performance when the restricted operator is evaluated first.}
    \label{fig:function_order}
\end{figure}

Figure \ref{fig:function_order} provides these results. For most algorithms, the convergence remains the same regardless of ordering, but for the extended Ryu algorithm, the impact is quite strong. With the unrestricted operator in the last position, the extended Ryu design performs similarly to the fully connected algorithm and the 2-Block minimum resistance design \eqref{resistance_prob}, which has the best contraction factor among the $2$-Block designs. But when the unrestricted operator is moved from the last resolvent to the first resolvent becomes poor relative to its competitors, which are invariant to this shift in resolvent evaluation order.

Finally, we test the impact of block size on contraction factor. As above, we test over Class 1 and Class 2 with $l=2$ and $\mu=1$. We use the optimal step size in all cases, and compare the minimum resistance design \eqref{resistance_prob} over all possible constant-size block counts for a given $n$. We include the fully connected and MT designs for reference, which provide $1$ and $n$ blocks, respectively. Figure \ref{fig:block_size} displays the block size results. We observe that increasing to three or more blocks has a strong negative impact on contraction factor, particularly in the identical operator case. For Class 2, the 2-Block design contraction factor approaches that of the 3-Block design near $n=15$.

\begin{figure}[h!]
    \begin{center}
        \begin{subfigure}{.8\textwidth}
\centering
\includegraphics[width=\linewidth]{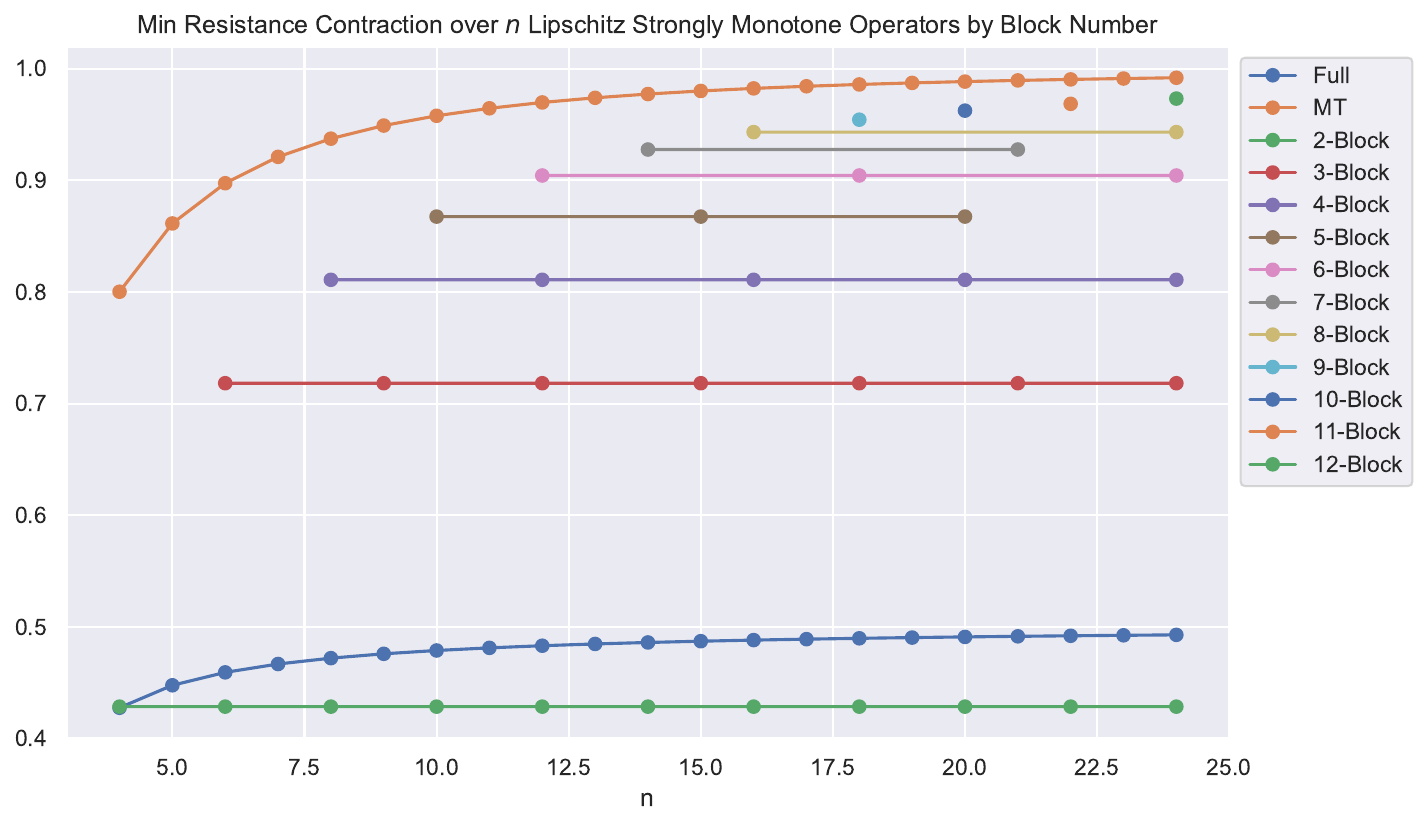}

    \caption{Class 1}
    \label{fig:block_size_lipstrong}

\end{subfigure}
\begin{subfigure}{.8\textwidth}
    \centering
        
\includegraphics[width=\linewidth]{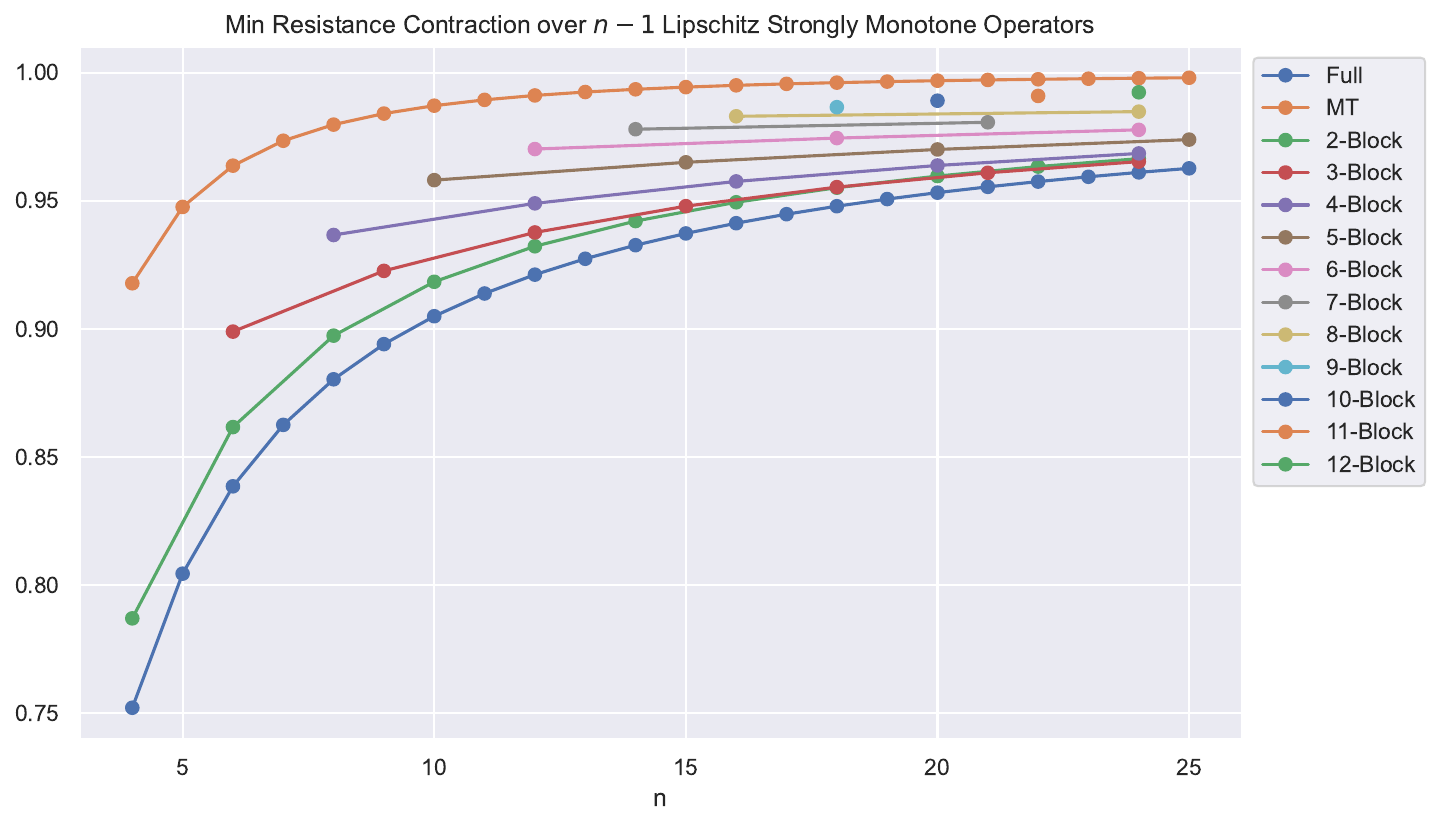}

    \caption{Class 2}
    \label{fig:block_size_proj}
\end{subfigure}
\end{center}
\caption{Convergence rate by block size}
\label{fig:block_size}
\end{figure}

\subsection{The Ideal Amount of Sparsity}

In our next experiment, we analyze the total time required for a contraction of 0.5, given a fixed amount of algorithm running time. This combines the average iteration time of the algorithm designs with their contraction factor in order to determine overall performance. We test across Class 2 with $l=2$ and $\mu=1$, using both fixed and optimal step size, and the maximum Fiedler value design \eqref{fiedler_prob}. We progressively increase matrix sparsity in $Z$ toward the 2-Block design, removing edges alternately from within the first and second block. Figure \ref{fig:edgeremoval} illustrates the edge removal ordering. After reaching the 2-Block design, we remove edges progressively from the remaining block matrices beginning near the diagonal. All resolvent computations and communications are treated as a constant single time unit throughout.

Figure \ref{fig:cycletime} displays the results for $n=24$. In Figure \ref{fig:pareto}, moving from right to left we see the gradual decrease in average iteration time as edges are removed toward the $2$-Block design. Once the 2-Block design is reached, continuing to remove edges shifts the convergence rate upwards without any further decrease in average iteration time. This point is the phase transition of each of the parametrized curves in Figure \ref{fig:cycletime}.
Figure \ref{fig:optimalsparsity} shows the decrease in total convergence time as edges are removed. We observe a strong inflection point at the 2-Block design, with total time rising gradually thereafter as more edges are pruned. These results demonstrate the utility of constructing designs within the $d$-Block constraints.

\begin{figure}[h!]
    \begin{center}
        \begin{subfigure}{.5\textwidth}
            \centering
                
        \includegraphics[width=\linewidth]{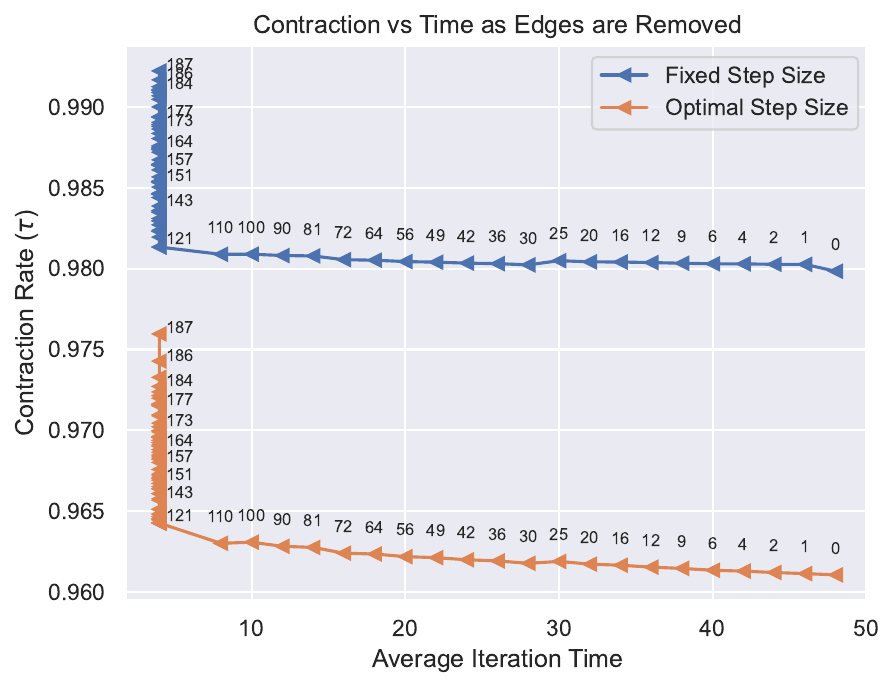}
        
            \caption{Convergence vs iteration time}
            \label{fig:pareto}
        \end{subfigure}%
        \begin{subfigure}{.5\textwidth}
\centering
\includegraphics[width=\linewidth]{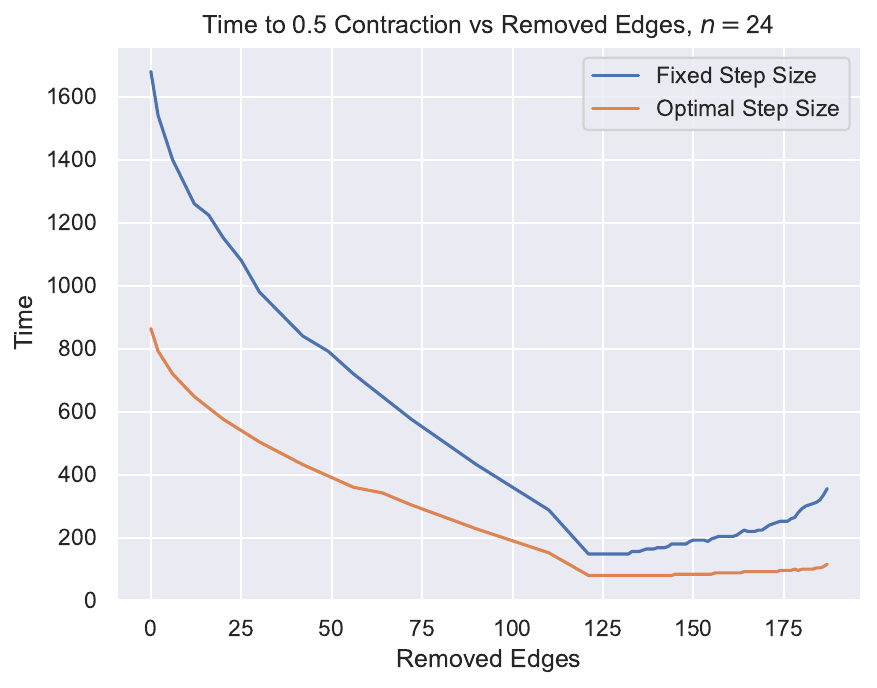}

    \caption{Time to half contraction vs sparsity}
    \label{fig:optimalsparsity}

\end{subfigure}
\end{center}
\caption{Impact of convergence and average iteration time on total algorithm time across $(n-1)$ Lipschitz strongly monotone operators with one monotone operator as sparsity increases to 2-Block design and beyond}
\label{fig:cycletime}
\end{figure}

\begin{figure}[h!]
    \begin{center}

        \includegraphics[width=.7\linewidth]{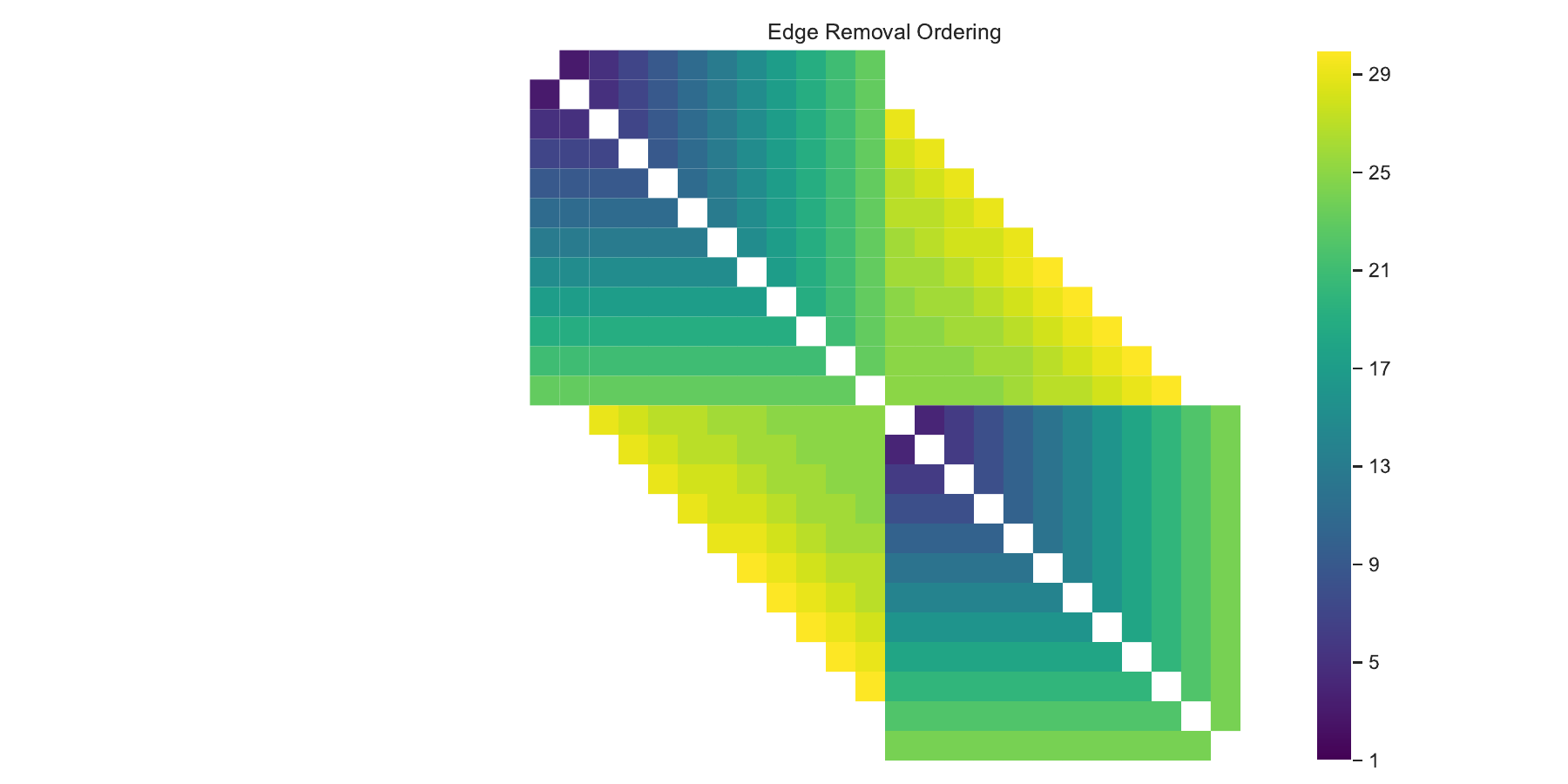}

    \end{center}
    \caption{In Figure \ref{fig:cycletime}, edges are alternately removed from block matrices (1,1) and (2,2) starting in the upper left off-diagonal positions and progressively removing edges to the right and down. Once block matrices $(1,1)$ and $(2,2)$ are restricted to identities, we then prune within the remaining blocks ($(1,2)$ and $(2,1)$, which are equivalent by symmetry), and proceed in the top right of block matrix $(2,1)$ from the diagonal of the entire matrix working outward.}
    \label{fig:edgeremoval}
\end{figure}

\subsection{Minimum Iteration Time and $d$-Block Designs}
We now turn our attention to problems with arbitrary resolvent computation and communication times. Although the $d$-Block designs for small $d$ consistently show strong performance in terms of contraction factor and average iteration time, it is possible in some cases to reduce the total time required for convergence by using the minimum iteration time formulations \eqref{misdp_mincycle} and \eqref{min_cycle_fmip}.

Achieving competitive performance of formulations \eqref{misdp_mincycle} and \eqref{min_cycle_fmip} with $d$-Block designs requires the addition of another constraint---a minimum number of nonzero entries for each row in $W$. From a graph theoretic perspective, this provides a minimum number of edges for each node in $G(W)$. Experiments with various constraint levels have shown that requiring at least $\floor{\frac{n}{2}}$ entries in each row preserves sufficient density in $W$ to keep the convergence rate comparable with the 2-Block design, so we subject our time minimizing designs to an additional constraint requiring $\floor{\frac{n}{2}}$ nonzero entries in each row throughout this experiment. For even $n$, we compare the performance of a time minimizing design with a $2$-Block design, and for odd $n$ we use a $3$-Block design with block sizes $\left(\frac{n-1}{2}, \frac{n-1}{2}, 1\right)$, which we found to perform best across the set of size $3$ partitions. We construct the $d$-Block designs using the minimum total resistance objective function \eqref{resistance_prob}, and we construct the minimum iteration time design using the scalable MILP formulation in problem \eqref{min_cycle_fmip}, which we formulate in Pyomo and solve with Gurobi \cite{bynum2021pyomo,gurobi}.

We assess the performance of each design by computing the total time required to achieve $0.01$ contraction factor using the single-iteration contraction factor $\tau$ from Theorem \ref{m_dual_thm} with optimal step size. We use a Class 2 problem set with $l=2$ and $\mu=1$ as before, and conduct 40 trials each for $n \in \{6,7,8,9\}$, selecting the resolvent computation times uniformly from [0.5,2.0] and the communication times uniformly from [1,11].
Figure \ref{fig:con_mincycle} provides the results. While the 2-Block design remains quite good for the even $n$, for odd $n$ the minimum iteration time results provide consistently better performance. 

\begin{figure}
    \centering        
    \includegraphics[width=.8\textwidth]{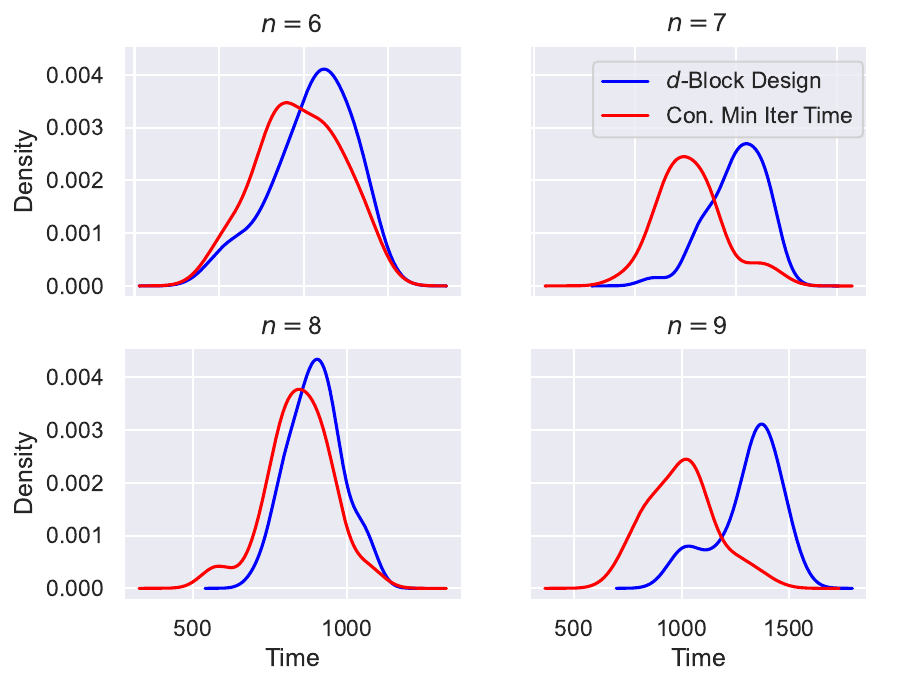}
    \caption{Comparison of $d$-Block and the constrained minimum iteration time design from \eqref{min_cycle_fmip} on randomly generated resolvent computation and communication times. $d$-Block designs are 2-Block for even $n$ and $\left(\frac{n-1}{2}, \frac{n-1}{2}, 1\right)$ for odd $n$ and use the minimum total effective resistance objective \eqref{resistance_prob}. Constrained Minimum Iteration Time designs use the MILP formulation \eqref{min_cycle_fmip} with the additional edge constraint described in the text.}
    \label{fig:con_mincycle}
\end{figure}

\section{Conclusion}

This work presents a novel framework for designing frugal resolvent splittings suitable for particular applications. We prove the convergence of these algorithms over $n$ maximal monotone operators whose sum has a zero, offering an enlarged range of valid step sizes and an approach which provides optimized step size given a characterization of the problem class. We establish the equivalence of all algorithms in \eqref{d_iteration} with minimal lifting designs. We also further develop the connection between the algorithm matrices and the weighted graph Laplacian, using the connection to inform the set of feasible constraints for \eqref{main_prob}. 

We demonstrate the utility of the framework with a number of possible constraint sets and objective functions. These allow minimization of iteration time for both uniform and arbitrary computation and communication times, as well as designs which optimize over a wide variety of common heuristics.
We also demonstrate the use of the PEP framework and its dual with these designs, optimizing $W$ and $\gamma$, and providing contraction factors under various structural assumptions on the monotone operators.

Numerically, we validate the performance of our contributions by comparing their performance in settings with different computation times, communication times, and problem classes. We show that $d$-Block designs, for small values of $d$, outperform competitors from the literature. We compare the designs optimized by the various spectral objectives, with and without the optimal step size, and find that minimizing the total effective resistance performs consistently well. We analyze algorithm design performance when the monotone operators are re-ordered, and find consistent performance across most designs, but note that the asymmetric extended Ryu design can perform poorly in this setting. Finally, we demonstrate that when resolvent computation times and communications times are known or can be accurately estimated, the minimal iteration time designs we propose in Section \ref{Sec:mip_formulations} outperform $d$-Block designs, but the marginal improvement suggests that $d$-Block designs still can be expected to exhibit good performance when these times are not available.

\bibliography{thebib}%

\nosectionappendix
\begin{toappendix}

\section{Objective Function Formulations}\label{objective_formulations}

In this section we provide formulations of each of the spectral objective functions described in Section \ref{Sec:Objectives}. In each of these examples, the objective function is a linear combination of the same spectral objective function applied to $Z$ and $W$. 

\subsubsection*{Maximum Fiedler Value}
\begin{notation*}
  \beta_z \geq 0 & parameter for weighting the objective in $Z$\\
  \beta_w \geq 0 & parameter for weighting the objective in $W$\\
  \gamma_z \geq 0 & decision variable capturing the Fiedler value of $Z$\\
  \gamma_w \geq 0 & decision variable capturing the Fiedler value of $W$\\ 
\end{notation*}

\begin{subequations}\label{fiedler_prob}
  
\begin{align}
    \max_{Z, W, \gamma_z, \gamma_w} \quad & \beta_z\gamma_z + \beta_w\gamma_w \nonumber \\
      \text{s.t.} \quad & \lambda_1(Z) + \lambda_2(Z) \geq \gamma_z \\
      & \lambda_1(W) + \lambda_2(W) \geq \gamma_w \\
      & \text{SDP constraints \eqref{con1}-\eqref{con8}}
\end{align}

\end{subequations}

\subsubsection*{Minimal Second-Largest Eigenvalue Magnitude (SLEM)}

\begin{notation*}
  \beta_z \geq 0 & parameter for weighting the objective in $Z$\\
  \beta_w \geq 0 & parameter for weighting the objective in $W$\\
  \gamma_z \geq 0 & decision variable capturing the SLEM of $Z$\\
  \gamma_w \geq 0 & decision variable capturing the SLEM of $W$\\ 
\end{notation*}
\begin{subequations}\label{slem_prob}
\begin{align}
    \min_{Z, W, \gamma_z, \gamma_w} \quad & \beta_z\gamma_z + \beta_w\gamma_w \nonumber \\
      \text{s.t.}  \quad & -\gamma_z \I \preceq \I - \frac{1}{2+\epsilon}Z - \frac{1}{n}\1 \1^T \preceq \gamma_z \I \\
      & -\gamma_w \I \preceq \I - \frac{1}{2+\epsilon}W - \frac{1}{n}\1 \1^T \preceq \gamma_w \I \\
      & \text{SDP constraints \eqref{con1}-\eqref{con8}}
\end{align}

\end{subequations}

\subsubsection*{Minimal Total Effective Resistance}

\begin{notation*}
  \beta_z \geq 0 & parameter for weighting the objective in $Z$\\
  \beta_w \geq 0 & parameter for weighting the objective in $W$\\
  Y_z \in \Sp^n & supporting decision variable for capturing $\sum_{i=2}^n\frac{1}{\lambda_i(Z)}$\\
  Y_w \in \Sp^n & supporting decision variable for capturing $\sum_{i=2}^n\frac{1}{\lambda_i(W)}$\\ 
\end{notation*}
\begin{subequations}\label{resistance_prob}
\begin{align}
    \min_{Z, W, Y_z, Y_w} \quad & \beta_z\text{Tr}\left(Y_z\right) + \beta_w\text{Tr}\left(Y_w\right) \nonumber \\
      \text{s.t.}  \quad & \begin{bmatrix}
        Z + \1 \1^T/n & \I \\
        \I    & Y_z      \\
      \end{bmatrix} \succeq 0  \\
      & \begin{bmatrix}
        W + \1 \1^T/n & \I \\
        \I    & Y_w      \\
      \end{bmatrix} \succeq 0  \\
      & \text{SDP constraints \eqref{con1}-\eqref{con8}}
\end{align}

\end{subequations}
 
\end{toappendix}

\end{document}